\newtheorem{theoreme}{Theorem}[section]
\newtheorem{lemma}[theoreme]{Lemma}
\newtheorem{proposition}[theoreme]{Proposition}
\newtheorem{definition}[theoreme]{Definition}
\newtheorem{remark}[theoreme]{Remark}
\newtheorem{corollary}[theoreme]{Corollary}
\newtheorem{example}[theoreme]{Example}
\def\R{\mathbb R}
\def\N{\mathbb N}
\def\Z{\mathbb Z}
\newcommand{\red}[1]{\textcolor{black}{#1}}
\newcommand{\blue}[1]{\textcolor{black}{#1}}
\newcommand{\begitem}{\begin{itemize}}
\newcommand{\finit}{\end{itemize}}
\newcommand{\tr}{{\mbox{\tiny \sf T}}}
\newcommand\restr[2]{{
  \left.\kern-\nulldelimiterspace 
  #1 
  \vphantom{\big|} 
  \right|_{#2} 
  }}
\newcommand{\bbx}{\overline{\mathbf{x}}}
\newcommand{\diag}{\mathrm{diag}}
\newcommand{\bS}{S}
\newcommand{\bzeta}{\boldsymbol{\zeta}}
\newcommand{\PPinv}{\Inv(\theta)}
\newcommand{\PP}{\mathcal{P}}
\newcommand{\eps}{\varepsilon}
\newcommand{\vphi}{\varphi}
\newcommand{\rr}{r}
\DeclareMathOperator{\support}{supp}
\DeclareMathOperator{\interior}{int}
\DeclareMathOperator{\Inv}{Inv}
\DeclareMathOperator{\argmax}{argmax}
\begin{document}

\title{Robust permanence for ecological maps }
\author[G. Roth]{Gregory Roth}
\email{greg.roth51283@gmail.com}
\address{Institute for Biodiversity and Ecosystem Dynamics (IBED), University of Amsterdam, The Netherlands.}
\author[P.L. Salceanu]{Paul L. Salceanu}
\email{salceanu@louisiana.edu}
\address{Department of Mathematics, University of Louisiana, Lafayette, LA 70504 USA}
\author[S.J. Schreiber]{Sebastian J. Schreiber}
\email{sschreiber@ucdavis.edu}
\address{Department of Evolution and Ecology, One Shields Avenue, University of California, Davis, CA 95616 USA}

\begin{abstract}
We consider ecological difference equations of the form $X_{t+1}^i =X_t^i A_i(X_t)$ where $X_t^i$ is a vector of densities corresponding to the subpopulations of species $i$ (e.g. subpopulations of different ages or living in different patches), $X_t=(X_t^1,X_t^2,\dots,X_t^m)$ is state of the entire community, and $A_i(X_t)$ are matrices determining the update rule for species $i$. These equations are permanent if they are dissipative and the extinction set $\{X: \prod_i \|X^i\|=0\}$ is repelling. If permanence persists under perturbations of the matrices $A_i(X)$, the equations are robustly permanent. We provide sufficient and necessary conditions for robust permanence in terms of Lyapunov exponents for invariant measures supported by the extinction set.  Applications to ecological and epidemiological models are given. 
\end{abstract}

\maketitle
\section{Introduction}

A fundamental question in ecology is to understand under what minimal conditions a community of species persist in the long run. Historically, theoretical ecologists characterize persistence by the existence of an asymptotic equilibrium in which the proportion of each population is strictly positive~\citep{may-75,roughgarden-79}. More recently, coexistence was equated  with the existence of an attractor bounded away from extinction~\citep{hastings-88}, a definition that ensures populations will persist despite small, random \red{environmental} perturbations~\citep{jtb-06,dcds-07}. However, ``environmental perturbations are often vigourous shake-ups, rather than gentle stirrings''~\citep{jansen-sigmund-98}. To account for large, but rare, perturbations,  the concept of permanence, or uniform persistence, was introduced in late 1970s~\citep{freedman-waltman-77,schuster-etal-79}. Uniform persistence requires that asymptotically species densities remain uniformly bounded away from extinction. In addition, permanence requires that the system is dissipative i.e. asymptotically species densities remain uniformly bounded from above.

Various mathematical approaches exist for verifying permanence~\citep{hutson-schmitt-92,smith-thieme-11} including topological characterizations with respect to chain recurrence~\citep{butler-waltman-86,hofbauer-so-89}, average Lyapunov functions~\citep{hofbauer-81,hutson-84,garay-hofbauer-03}, and measure theoretic approaches~\citep{schreiber-00,hofbauer-schreiber-10}. The latter two approaches involve the long-term per-capita growth rates of species when rare. For continuous-time, unstructured models of the form
\begin{equation}\label{eq:Kolmogorov}
\frac{dx^i}{dt}=f_i(x)x^i, \: i=1,...,m,
\end{equation}
where $x=(x^1,\dots,x^m)$ is the vector of population densities, the long-term growth rate of species $i$ with initial community state $x$ equals \[
r_i(x)=\limsup_{t \to \infty}\frac{1}{t}\int_0^t f_i(x_s)ds\]
where $x_s$ denotes the \red{solution} of \eqref{eq:Kolmogorov} with initial condition $x_0=x$.
\cite{hofbauer-81} showed, under appropriate assumptions, that the system is permanent provided
\begin{description}
\item[Condition $\bold{C}$] \emph{there exist positive weights $p_1,\dots,p_m$ associated with each species such that $\sum_i p_i r_i (x)>0$ for any initial condition $x$ with one or more missing species (i.e. $\prod_i x^i = 0$)}.
\end{description}
Intuitively, the community persists if on average the community increases when rare.

Any sensible definition that characterizes the long-term behavior of a population dynamics should be robust under small perturbation of the governing equations. \red{This concept is practical from a modeling standpoint as most population dynamics
models ignore weak interactions between populations. For instance,
the modeler assumes that $f_i$ is independent of $x_j$ where
$j\neq i$ when in fact there is a weak dependence on $x_j$. Hence
it is desirable to know whether ``nearby'' models that include
these interactions as well as the original model  are permanent.} To address this need, the model \eqref{eq:Kolmogorov} is \emph{robust permanent} if it is permanent under small perturbations of the maps $f_i$ (see \citep{hofbauer-sigmund-98,hutson-schmitt-92}). \cite{schreiber-00} and \cite{garay-hofbauer-03} consider the average long-term growth rates of species $i$
\[
r_i(\mu)=\int r_i(x)d\mu
\]
with respect to an ergodic measure $\mu$, and showed that the system \eqref{eq:Kolmogorov} is robust permanent provided
\begin{description}
\item[Condition $\bold{C'}$] \emph{there exist positive weights $p_1,\dots,p_n$ associated with each species such that $\sum_i p_i r_i (\mu)>0$ for any ergodic measure $\mu$ supported by the boundary of the non-negative cone $\R_+^n$}.
 \end{description}
 In fact, conditions $\bold{C}$ and $\bold{C'}$ are equivalent. Moreover, it is sufficient to check  $\bold{C'}$ for each component of a Morse decomposition on the boundary (see  Section 3.2). While \citet{garay-hofbauer-03} extended these results to discrete-time, unstructured models of the form $x_{t+1}^i= f_i(x_t)x_t^i$, their results are restricted to homeomorphisms which excludes many classical models from population biology (e.g. see Section \ref{section:application}).

As populations often exhibit structure (e.g., individuals living in different spatial locations, individuals being of different ages or in different stages of development),  \cite{hofbauer-schreiber-10} obtained robust permanence results for systems of the form
\begin{equation}\label{eq:Structured_Systems_Cont}
\frac{dx^i}{dt}=A_i(x)x^i, \: i=1,...,m,
\end{equation}
where $x^i \red{=(x^{i1},x^{i2},\dots,x^{in_i})^\tr}\in \mathbb{R}^{n_i}$ is the transpose of the row vector of populations abundances of individuals in different states for population $i$, $x = (x^1 , . . . , x^m )$, $A_i(x)$ are non-negative matrices, and $n_1+...+n_m=n$.  For these models, the average long-term growth rates with respect to an ergodic measure $\mu$ are given by \[\red{r_i}(\mu)=\int \langle A_i(x)u_i(x),u_i(x)\rangle\: \mu (dx)\] where $\langle \cdot, \cdot \rangle$ represents the inner product and where, care of a result of  \citet{ruelle-79b}, $u_i(x)$ spans a one dimensional vector space that is invariant under the linear transformation represented by the fundamental solution matrix for $\frac{dy}{dt}=A_i(x_t)y$ with $x_0=x$. With this notion of the long-term growth rate, the necessary condition $\bold{C'}$ for robust permanence  extends to these structured models.

In this paper, we extend conditions ($\bold{C}$) and ($\bold{C'}$) to the discrete-time analogous of \eqref{eq:Structured_Systems_Cont} of the form
\begin{equation}\label{eq:Structured_Systems_Discrete}
X_{t+1}^i=X_t^iA_i(X_t), \: i=1,...,m,
\end{equation}
where $X^i\in \mathbb{R}^{n_i}$ is the row vector of populations abundances of individuals in different states for population $i$ at time $t \in \N$, and $X_t = (X^1_t , . . . , X^m_t )$. Then the long term growth rate $r_i(x)$ of species $i$ corresponds to the dominant Lyapunov exponent associated with the matrix products along the population trajectory:
\begin{equation*}\label{eq:Lyapunov_Exponents}
r_i(x)=\limsup_{t\to \infty}\frac{1}{t}\ln ||A_i(X_0)\cdot \ldots \cdot A_i(X_{t-1})||, \: X_0=x,
\end{equation*}
and the average long-term growth rate with respect to an ergodic measure $\mu$ is
\begin{equation*}\label{eq:Invasion_Rates}
r_i(\mu)=\int r_i(x)\: \mu(dx).
\end{equation*}
This extension to discrete time is non-trivial as the semiflow generated by a discrete dynamical system is not necessarily a homeomorphism, a key ingredient used in the proofs for the continuous-time cases.

Our main result implies that the ``community increases when rare" criterion for robust permanence also applies for the system \eqref{eq:Structured_Systems_Discrete}. More precisely, we show that conditions ($\bold{C}$) and ($\bold{C'}$) are equivalent and imply robust permanence of the system \eqref{eq:Structured_Systems_Discrete}. This result extends (in the case when $n_i=1$ for all $i=1,...,m$) the results in \cite{garay-hofbauer-03} from homeomorphisms to non-invertible maps and the result in \cite{hofbauer-schreiber-10} from continuous-time models to discrete-time models. Our model, assumptions, and definitions of permanence and robust permanence are presented in Section 2. Long-term growth rates for these models and our main theorem are stated in Section 3. A refinement of our result involving Morse decompositions of boundary dynamics is also presented in Section 3. Proofs of most results are presented in Section 4. We apply our results to a series of models from population biology in Section 5. We obtain, for the first time, robust permanence results for the discrete-time Lotka-Volterra equations introduced by \cite{hofbauer-etal-87}. For spatially structured versions of these Lotka-Voltera models, we provide, using a perturbation result, a simple condition for robust permanence. Notably, our condition holds despite the dynamics of each competitor potentially being chaotic. Finally, we obtain a robust permanence condition for a classical epidemiological SIR model.

\section{Model and assumptions}

We study the dynamics of $m$ interacting populations in a constant environment. Each individual in population $i$ can be in one of $n_i$ individual states such as their age, size, or location.  Let $X_t^i = (X_t^{i1}, \dots, X_t^{in_i})$ denote the row vector of populations abundances of individuals in different states for population $i$ at time $t\in \N$. $X_t^i$ lies in the non-negative cone $\R^{n_i}_+$. The \emph{population state} is the row vector  $X_t=(X_t^1, \dots,  X_t^m)$ that lies in the non-negative cone $\R^n_+$ where $n=\sum_{i=1}^mn_i$.

To define the population dynamics, we consider projection matrices for each population that depend on the population state. More precisely, given $X$ the population state, for each $i$, let $A_i(X)$ be a non-negative, $n_i\times n_i$ matrix whose $j$--$k$-th entry corresponds to the contribution of individuals in state $j$ to individuals in state $k$ e.g. individuals transitioning from state $j$ to state $k$ or the mean number of offspring in state $k$ produced by individuals in state $j$.  Using these projection matrices the population dynamic of population $i$ is given by
\begin{equation}\label{Model_Componentwise}
	X_{t+1}^i = X_t^iA_i(X_t).
\end{equation}
where $X_t^i$ multiplies on the left hand side of $A_i(X_t)$ as it is a row vector.  If we define $A(X)$ to be the $n\times n$ block diagonal matrix $\mathrm{diag}(A_1(X),\dots,A_m(X))$, then the dynamics of the interacting populations are given by
\begin{equation}\label{DYN1}
X_{t+1} = X_t A(X_t).
\end{equation}
For these dynamics, we make the following assumptions:
\begin{enumerate}
\item[\textbf{H1:}] For each $i$, $X \mapsto A_i(X)$ is a continuous map into the space of $n_i\times n_i$ non-negative matrices.
\item[\textbf{H2:}] For each population $i$, the matrix $A_i$ has fixed sign structure corresponding to a primitive matrix. More precisely, for each $i$, there is a $n_i\times n_i$, non-negative, primitive matrix $P_i$ such that  the $j$-$k$th entry of $A_i(X)$ equals zero if and only if $j$-$k$th entry $P_i$ equals zero for all $1\le j,k\le n_i$ and $X\in \R^n_+$.
\item[\textbf{H3:}] There exists a compact set $S_A\subset \R^n_+$ such that for all $X_0\in \R^n_+$,  $X_t\in S_A$ for all $t$ sufficiently large. 
\end{enumerate}

Our analysis focuses on whether the interacting populations tend to be bounded away from extinction. Extinction of one or more population corresponds to the population state lying in the \emph{extinction set}
\[
\bS_0 = \{ x \in \R^n_+ : \prod_i \|x^i\|=0\}
\]
where $\|x^i\|=\sum_{j=1}^{n_i} x^{ij}$ corresponds to the $\ell^1$--norm of $x^i$.
\red{For any $\eta\ge 0$,} define
\[
S_\eta=\{ x \in \R^n_+ : \|x^i\| \le \eta \mbox{ for some } i\}.
\]

\begin{definition}\label{defn}
Model \eqref{DYN1} is \emph{permanent} if there exists $\eta>0$ such that for all $x \in \{y \in \R^n_+ : \prod_i \Vert y^i\Vert \neq 0 \}$, there exists $t_0=t_0(x) >0$ such that $X_t \in \R^n_+ \backslash S_{\eta}$ for all $t\ge t_0$, whenever $X_0=x$.
\end{definition}

For $B\subset \R^n_+$ and $\delta>0$, let $N_{\delta}(B)$ be the $\delta$-neighborhood of $B$, i.e.
\[
N_{\delta}(B)=\{x\in \R^n_+ \ : \ \Vert x-y\Vert < \delta \text{ for some } y \in B\}.
\]

We define a $\delta$\emph{-perturbation of model (\ref{DYN1})} to be a system of the form
\[
X_{t+1}=X_tA^{\delta}(X_t),
\]
with $A^{\delta}=\diag(A_1^\delta,\dots,A_m^{\delta})$, that satisfies (i) assumptions \textbf{H1}-\textbf{H3}, (ii) $S_{A^{\delta}} \subset N_{\delta}(S_A)$, and (iii) $\Vert A(x)-A^\delta(x)\Vert \le \delta$ for all $x\in N_{1}(S_A)$.

\begin{definition}
Model (\ref{DYN1}) is \emph{robustly permanent} if there exist $\delta>0$  such that all $\delta$-perturbations of model (\ref{DYN1}) are permanent \red{with a common $\eta>0$ value} i.e. there is a common/uniform region of repulsion around the boundary.
\end{definition}

\section{Results}

\subsection{Long-term growth rates and \red{robustly unsaturated sets}}

Understanding persistence often involves understanding what happens to each population when it is rare. To this end, we need to understand the propensity of the population to increase or decrease in the long term. Since
\[
X_{t}^i= X_0^iA_i(X_0)A_i(X_1)\dots A_i( X_{t-1}),
\]
one might be interested in the long-term ``growth'' of product of matrices
\begin{equation}\label{matrixproducts}
A_i(X_0)A_i(X_1)\dots A_i(X_{t-1})
\end{equation}
as $t\to\infty$. One measurement of this long-term growth rate when $X_0=x$ is
\[
\rr_i(x) = \limsup_{t\to\infty} \frac{1}{t} \log \| A_i(X_0)A_i(X_1)\dots A_i(X_{t-1})\|.
\]
Population $i$ is tending to show periods of increase when $\rr_i(x)>0$ and periods of asymptotic decrease when $\rr_i(x)<0$.

An expected, yet useful property of $\rr_i(x)$ is that $\rr_i(x)\le 0$ whenever $\|x^i\|>0$. In words, whenever population $i$ is present, its per-capita growth rate in the long-term is non-positive. This fact follows from \red{\textbf{H3}}. Furthermore, if $\limsup_{t\to\infty} \|X^i_t\|>0$, we get that $\rr_i(x)=0$. In words, if population $i$'s density infinitely often is bounded below by some minimal density, then its long-term growth rate is zero as it is not tending to extinction and its densities are bounded from above. Both of these facts are consequences of Proposition\red{s} \ref{prop:equality-lyap}, \ref{cor_lambdanegative} and \ref{prop_lambdanul}.

Define the map
\begin{equation}\label{def:phiA}
\begin{array}{ccl}
 \Phi_A : &\R^n_+   &\rightarrow  \R^n_+  \\
  & x & \mapsto  xA(x)
\end{array}
\end{equation}
Let $\Phi^t_A$ denote the composition of $\Phi_A$ with itself $t$ times, for $t \in \N$. \blue{Define the \emph{global attractor} for $\Phi_A$ by $G_A=\cap_{s\ge 0} \overline{\cup_{t\ge s} \Phi_A^t(S_A)}$ where $\overline{B}$ denotes the closure of the set $B$.}
Recall, a Borel probability measure $\mu$ on $\R^n_+$ is \emph{$\Phi_A$-invariant measure}  provided that
\[
\mu(B)=\mu(\Phi_A^{-1}(B))
\]
for all Borel sets $B \subset \R^n_+$ When an invariant measure $\mu$ is statistically indecomposable, it is \emph{ergodic}. More precisely, $\mu$ is ergodic if it can not be written as a convex combination of two distinct invariant measures, i.e. if there exist $0<\alpha<1$ and two invariant measures $\mu_1,\mu_2$ such that $\mu=\alpha \mu_1 +(1-\alpha) \mu_2$, then $\mu_1=\mu_2=\mu$. If $\mu$ is a $\Phi_A$-invariant measure, the subadditive ergodic theorem implies that  \[\rr_i(x) = \lim_{t\to\infty} \frac{1}{t} \log \| A_i(X_0)A_i(X_1)\dots A_i(X_{t-1})\| \] \red{exists} for $\mu$-almost every $x \in \R^n_+$ \red{and} \[\rr_i(\mu)=\int \rr_i(x)\mu(dx) \] \red{which we call  the long-term growth rate of species $i$ with respect to $\mu$}. When $\mu$ is ergodic, the subadditive ergodic theorem implies that $\rr_i(x)$ equals $\rr_i(\mu)$  for $\mu$-almost every $x \in \R^n_+$.

A \red{compact} set $M\subset S_A$ is said to be \emph{invariant} for $\Phi_A$ if $\Phi_A(M)=M$ \red{and is \emph{isolated} if there exists a compact neighborhood $N$, called \emph{an isolating neighborhood of $M$}, such that $M$ is the largest invariant set in $N$. For sufficiently small $\delta$-perturbations $\Phi_{A^\delta}$ of $\Phi_A$, an isolating neighborhood $N$ of $M$ for $\Phi_A$ is also an isolating neighborhood for a compact invariant set $M(\delta)$ of $\Phi_{A^\delta}$. $M(\delta)$ is called the \emph{continuation} of $M$. }

\red{Recall the $\omega$-limit set of a point $x\in S_A$ is defined by
\[
\omega_A(x)=\{y \in S_A: \mbox{ there exists } t_k\uparrow \infty \mbox{ such that } \lim_{k\to\infty} \Phi_A^{t_k}(x) =y \}
\]
For a compact invariant set $M$, let $W^s(M)=\{x\in \mathbb{R}^n_+\: \mid \: \omega_A(x)\subset M\}$ be the \emph{stable set} of $M$}.

\begin{definition} A compact set $M\subset S_0$ is \emph{unsaturated} for model \eqref{DYN1} if $M$ is isolated in $\mathbb{R}^n_+$ and $W^s(M)\subset S_0$. If, in addition, the continuation of $M$ corresponding to \red{sufficiently} small \red{$\delta$-}perturbations of model \eqref{DYN1} \red{are} unsaturated, we call $M$ \emph{robustly unsaturated}. \end{definition}

With these definitions, we can state our main Theorem.

\begin{theoreme}\label{thm:main}
Let $M$ be a compact isolated set for the dynamics $\Phi_A$ restricted to $S_0$. If one of the following equivalent conditions holds
\begin{enumerate}
\item[(i)] $\rr_*(\mu) := \max_{1\le i\le m} \rr_i(\mu)>0$
for every $\Phi_A$-invariant probability measure with $\mu (M)=1$, or
\item[(ii)] there exist positive constants $p_1,\dots,p_m$ such that
\[
\sum_i p_i \rr_i(\mu)>0
\]
for every ergodic probability measure with $\mu(M)=1$, or
\item[(iii)] there exist positive constants $p_1,\dots, p_m$ such that
\[
\sum_i p_i \rr_i(x)> 0
\]
for all $x\in M$,
\end{enumerate}
then $M$ is robustly unsaturated for model	 \eqref{DYN1}.
\end{theoreme}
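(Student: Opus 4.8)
The plan is to prove the equivalences (i)$\Leftrightarrow$(ii)$\Leftrightarrow$(iii) and then deduce robust unsaturation from (ii) by an average‑Lyapunov‑function argument built on $V(x)=\prod_i\|x^i\|^{p_i}$. Write $a^{(i)}_t(x)=\log\|A_i(X_0)\cdots A_i(X_{t-1})\|$, so $\rr_i(x)=\limsup_t a^{(i)}_t(x)/t$. The decisive preliminary step is to extract from \textbf{H2} a \emph{superadditive} counterpart to the trivial submultiplicative bound: for a suitable index $\nu_i$, any product of $\nu_i$ nonnegative matrices carrying the sign structure of the primitive $P_i$ is entrywise strictly positive with all entries comparable within a factor that — by \textbf{H1}, \textbf{H3} and compactness, and since $\delta$‑perturbations preserve the sign structure — is uniform over the relevant compact region and over all small $\delta$‑perturbations. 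This gives $a^{(i)}_{t+s}(x)\ge a^{(i)}_t(x)+a^{(i)}_s(\Phi_A^t x)-C$ for $s,t\ge\nu_i$ with $C$ uniform, and, for any $x$ with $x^i\neq0$ (hence $X^i_t\neq0$ for all $t$), the uniform estimate $|a^{(i)}_t(x)-(\log\|X^i_t\|-\log\|x^i\|)|\le C$, since after $\nu_i$ steps the direction of $X^i$ lies in a fixed compact subset of the interior of the simplex; in particular $\rr_i(x)=\limsup_t\frac1t\log\|X^i_t\|$, which already yields the ``expected facts'' recorded before the theorem.

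For the equivalences, work on the compact convex set $K$ of $\Phi_A$‑invariant probability measures with $\mu(M)=1$. The subadditive ergodic theorem makes $\mu\mapsto\rr_i(\mu)=\int\rr_i\,d\mu$ affine, and $\rr_i$ is bounded below uniformly. Since $\frac1\tau\int a^{(i)}_\tau\,d\mu$ is subadditive in $\tau$ and $\frac1\tau\int(a^{(i)}_\tau-C)\,d\mu$ is superadditive in $\tau$, $\rr_i(\cdot)$ is both an infimum and a supremum of functions continuous in $\mu$, hence \emph{continuous} on $K$, and jointly continuous in $(\delta,\mu)$. Now: (iii)$\Rightarrow$(ii) is immediate from the ergodic case of the subadditive ergodic theorem; (ii)$\Rightarrow$(iii) follows by taking a weak‑$*$ limit $\mu$ of the empirical measures $\frac1{t_k}\sum_{j<t_k}\delta_{\Phi_A^j x}$ for $x\in M$ and averaging the superadditivity bound along the orbit, which gives $\rr_i(\mu)\le\liminf_k a^{(i)}_{t_k}(x)/t_k$ and hence $0<\sum_ip_i\rr_i(\mu)\le\limsup_t\frac1t\sum_ip_ia^{(i)}_t(x)\le\sum_ip_i\rr_i(x)$; (ii)$\Rightarrow$(i) uses the ergodic decomposition; and (i)$\Rightarrow$(ii) is a separation argument: $C_0:=\{(\rr_1(\mu),\dots,\rr_m(\mu)):\mu\in K\}$ is compact and convex, (i) says $C_0$ misses the closed negative orthant, a hyperplane through the origin yields $q\in\R^m_+\setminus\{0\}$ with $\sum_iq_i\rr_i(\mu)\ge c>0$ on $K$, and the uniform lower bound on $\rr_i$ lets us perturb $q$ to strictly positive weights $p$.

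For robust unsaturation, fix $p=(p_i)$ from (ii) and $V$ as above, so $\frac1t\log V(X_t)=\frac1t\sum_ip_ia^{(i)}_t(x)+o(1)$ by the step above, and $V$ is bounded on bounded sets. Since $\sum_ip_i\rr_i(\mu)>0$ for every $\Phi_A$‑invariant $\mu$ on $M$ (by (ii) and ergodic decomposition) and $(\delta,\mu)\mapsto\sum_ip_i\rr^\delta_i(\mu)$ is continuous (here $\rr^\delta_i$ is computed with $A^\delta$), compactness provides $\delta_0>0$ and a bounded neighbourhood $U\subset N_1(S_A)$ of $M$ such that for all $\delta<\delta_0$ we have $M(\delta)\subset U$ and $\sum_ip_i\rr^\delta_i(\mu)\ge\gamma>0$ for every $\Phi_{A^\delta}$‑invariant $\mu$ with $\support\mu\subset\overline U$. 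The key claim is: for such $\delta$, no $x\in\R^n_+\setminus S_0$ has $\Phi_{A^\delta}$‑forward orbit eventually in $U$. Indeed, any weak‑$*$ limit $\mu$ of the empirical measures along such an orbit is $\Phi_{A^\delta}$‑invariant with $\support\mu\subset\overline U$, so $\sum_ip_i\rr^\delta_i(\mu)\ge\gamma$; but the superadditive empirical inequality applied to $A^\delta$ gives $\gamma\le\sum_ip_i\rr^\delta_i(\mu)\le\liminf_k\frac1{t_k}\log V(X_{t_k})\le0$ since $V$ is bounded along the (bounded) orbit — a contradiction. The claim yields $W^s(M(\delta))\subset S_0$ and forces $M(\delta)$ to be the maximal invariant set in $U$, hence isolated in $\R^n_+$; thus $M(\delta)$ is unsaturated for all $\delta<\delta_0$, i.e.\ $M$ is robustly unsaturated (the case $\delta=0$ simultaneously gives $W^s(M)\subset S_0$ and isolation of $M$ in $\R^n_+$).

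The part I expect to be hardest is the very first step: producing a \emph{uniform} superadditive bound for $a^{(i)}_t$, with constants stable under perturbation, purely from the primitive sign pattern. In the non‑invertible discrete‑time setting there is no Oseledets/Ruelle invariant line to anchor the analysis, so this estimate is precisely what must replace the homeomorphism hypothesis of \citet{garay-hofbauer-03} and the continuous‑time machinery of \citet{hofbauer-schreiber-10}; once it is in hand, continuity of $\rr_i(\cdot)$, the empirical‑measure inequality $\rr_i(\mu)\le\liminf_k a^{(i)}_{t_k}(x)/t_k$, and the perturbation argument underlying robustness all follow routinely from compactness of the space of invariant measures.
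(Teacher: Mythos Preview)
Your argument is sound and the superadditive bound from the primitive sign pattern is exactly the right substitute for additivity of Birkhoff sums; with it, the continuity of $\mu\mapsto r_i(\mu)$, the empirical inequality $r_i(\mu)\le\liminf_k a^{(i)}_{t_k}(x)/t_k$, and the perturbation stability all follow as you say. However, your route differs substantially from the paper's. The paper does \emph{not} avoid the Oseledets/Ruelle line bundle: it passes to the natural extension (the trajectory space $\Gamma$ with the invertible shift $\Theta$) precisely so that Ruelle's Proposition~3.2 applies, yielding a continuous direction field $u_i$ and a continuous scalar $\zeta_i(\gamma)=\log\|u_i(\gamma)A_i(\gamma)\|$ with $r_i(\gamma)=\limsup_t\frac1t\sum_{s<t}\zeta_i(\Theta^s\gamma)$ and $r_i(\tilde\mu)=\int\zeta_i\,d\tilde\mu$; this reduces everything to genuine Birkhoff sums and makes the saturated-measure arguments (Propositions~\ref{proposition-convergence-empiricalmeasures} and~\ref{proposition:robust-persistence}) transparent. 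For unsaturation the paper then invokes the Hofbauer--So dichotomy (either $W^s(M)\not\subset S_0$ or $M$ is not isolated) and in each branch produces a saturated invariant measure on $M$, contradicting~(i); robustness is obtained by passing to a weak$^*$ limit of saturated measures for the $\delta_n$-perturbations. Your approach bypasses both the natural extension and the Hofbauer--So step, proving unsaturation and robustness in one stroke via the average Lyapunov function $V(x)=\prod_i\|x^i\|^{p_i}$. What you gain is a more elementary, self-contained proof that never leaves $\R^n_+$; what the paper gains is a cleaner structural representation ($r_i(\mu)=\int\zeta_i\,d\mu$) that makes the subsequent analysis essentially identical to the unstructured case. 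Your closing remark that ``there is no Oseledets/Ruelle invariant line to anchor the analysis'' is therefore inaccurate as a description of the obstacle: the paper shows exactly how to recover that line by lifting to the invertible shift, and this is its main technical contribution.
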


\red{
\begin{remark}\label{remark}
For some applications (e.g., the disease model considered in section~\ref{application:disease}), it is useful to relax the primitivity assumption \textbf{H2}. For instance, if there exists an open neighborhood $U$ of $M$ such that for each $i$, $A_i(x)$ has a fixed sign pattern for all $x\in U$, then $A_i(x)$ can be decomposed into a finite number, say $m_i$, of irreducible components. For each of these irreducible components, one can define $\rr_i^j (x) =\limsup_{t\to\infty} \frac{1}{t} \log \| A_i^j(X_0)A_i^j(X_1)\dots A_i^j(X_{t-1})\|$ with $X_0=x$ where $A_i^j(x)$ is the submatrix of $A_i(x)$ corresponding to the $j$-th irreducible component of $A_i(x)$. If we define $\rr_i(x)=\max_{1\le j\le m_i} \rr_i^j (x)$ and similarly define $\rr_i^j(\mu)$ and $\rr_i(\mu)=\max_{1\le j\le m_i} \rr_i^j (\mu)$, then all of the assertions of Theorem~\ref{thm:main} still hold.
\end{remark}
}

We can prove a partial converse to Theorem~\ref{thm:main}. \red{For any ergodic probability measure $\mu$, define $\mbox{species}(\mu)$ to be the unique subset of   $\{1,2,\dots,n\}$ such that \[\mu(\{x: \|x_i\|>0 \mbox{ if and only if }i\in \mbox{species}(\mu)\})=1.\]}
\begin{proposition}\label{prop:nec}
Assume $\Phi_A$ is twice continuously differentiable. Let $M$ be a compact isolated set for the dynamics $\Phi_A$ restricted to $S_0$. If $\mu$ is an ergodic, probability measure with $\mu(M)=1$ and
\[
\rr_i(\mu)\le 0 \mbox{ for all }i\notin \red{\mbox{species}(\mu)},
\]
then $M$ is not robustly unsaturated for \eqref{DYN1}. Specifically, for any $\delta>0$, there exists a $\delta$-perturbation $A^\delta$ of \eqref{DYN1}  and  $x\in S_{A^\delta}\setminus S_0$ such that $\omega_{A^\delta}(x)\subset M$.
\end{proposition}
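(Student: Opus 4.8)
The plan is to construct, for any prescribed $\delta>0$, a $\delta$-perturbation whose dynamics drags an interior trajectory onto $M$. Fix an ergodic measure $\mu$ with $\mu(M)=1$ and $\rr_i(\mu)\le 0$ for all $i\notin\mbox{species}(\mu)$. Write $I=\mbox{species}(\mu)$ for the ``resident'' species and $J=\{1,\dots,m\}\setminus I$ for the ``missing'' species. On the support of $\mu$ the missing species are absent, so $M$ lives in the face $\{x:\|x^i\|=0 \text{ for } i\in J\}$; the residents support a genuine invariant measure there. The idea is to leave the resident block $A_i$, $i\in I$, essentially unchanged (perturb it by at most, say, $\delta/2$ merely to make room), and to shrink the per-capita growth of each missing species $i\in J$ by a small constant factor $e^{-c\delta}$, replacing $A_i(x)$ by $(1-c\delta)A_i(x)$ or multiplying by a scalar slightly less than one on $N_1(S_A)$. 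Because $\rr_i(\mu)\le 0$ already, after this perturbation the perturbed growth rate satisfies $\rr_i^\delta(\mu)<0$ strictly for every $i\in J$, uniformly in $i$.

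The key step is then an invariant-manifold / stable-set argument showing that this strict negativity forces a nonempty stable set reaching outside $S_0$. Here is where I would use the $C^2$ hypothesis and a result of Ruelle–type (as invoked in the excerpt for the continuous-time case): since $\mu$ is ergodic for the perturbed resident dynamics on $M$ and each missing species has strictly negative normal Lyapunov exponent $\rr_i^\delta(\mu)<0$, the stable set of $M$ (with respect to $\Phi_{A^\delta}$) is not contained in $S_0$. Concretely, by Pesin/Ruelle stable manifold theory applied transversally to $S_0$, $\mu$-a.e.\ point of $M$ has a local stable manifold of dimension matching the number of contracting normal directions, and since all normal directions associated with the $J$-coordinates are contracting, these local stable manifolds protrude into the region where $\|x^i\|>0$ for $i\in J$. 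Picking any $x$ on such a manifold off $S_0$ (and inside $S_{A^\delta}$, which is possible since the global attractor contains $M$ and the stable manifold accumulates on $M\subset S_{A^\delta}$), we get $\omega_{A^\delta}(x)\subset M$, so $M$ is not unsaturated for the perturbation, hence not robustly unsaturated.

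I would organize the steps as: (1) decompose species into $I$ and $J$ and record that $M$ lies in the $J$-extinction face; (2) define the perturbation $A^\delta$ — identity on resident blocks up to a harmless $\delta/2$ adjustment, scalar contraction $e^{-c\delta}$ on each missing block — and verify conditions (i)–(iii) in the definition of a $\delta$-perturbation, in particular that \textbf{H1}–\textbf{H3} and the sign-structure hypothesis persist (the scalar multiple preserves the sign pattern) and that $S_{A^\delta}\subset N_\delta(S_A)$ (contracting the missing species only shrinks the attractor toward $S_0$, which is inside $N_\delta(S_A)$); (3) compute $\rr_i^\delta(\mu)=\rr_i(\mu)-c\delta<0$ for $i\in J$, using that scaling $A_i$ by a constant shifts its dominant Lyapunov exponent additively, and $\rr_i^\delta(\mu)=0$ for $i\in I$ by the bounded-and-non-extinct argument of Propositions~\ref{cor_lambdanegative}–\ref{prop_lambdanul} applied to the perturbed resident dynamics (the residents still persist because their block was barely touched); (4) invoke the $C^2$ stable manifold theorem transverse to $S_0$ to produce $x\in W^s(M)\setminus S_0$ with $x\in S_{A^\delta}$, giving $\omega_{A^\delta}(x)\subset M$.

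The main obstacle is step (4): ensuring the local stable manifolds of $M$ genuinely reach outside $S_0$ and, simultaneously, stay inside the trapping region $S_{A^\delta}$. The transversality of the stable manifold to $S_0$ follows from strict negativity of all normal exponents, but one must be careful that $M$ need not be a single hyperbolic orbit — it is merely compact, isolated, and carries an ergodic measure. The cleanest route is probably to work $\mu$-a.e.: Pesin theory gives a measurable family of local stable manifolds through $\mu$-generic points of $M$, of the right dimension, depending measurably on the base point; a positive-measure set of these must intersect $\{\|x^i\|>0,\ i\in J\}$ precisely because the contracting normal bundle spans all the $J$-normal directions, and points sufficiently close to $M$ on such manifolds lie in the attractor $G_{A^\delta}\subset S_{A^\delta}$. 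An alternative, avoiding Pesin theory, is a direct argument: linearize $\Phi_{A^\delta}$ in the normal $J$-directions over the (non-uniformly hyperbolic) base $M$, use the Perron–Frobenius vectors $u_i(x)$ from \textbf{H2} to diagonalize the normal growth, and build $x$ by a graph-transform / successive-approximation scheme exploiting $\rr_i^\delta(\mu)<0$ — this is essentially the discrete-time analogue of the construction in \cite{hofbauer-schreiber-10}, adapted to non-invertible maps by only iterating forward.
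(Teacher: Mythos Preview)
Your proposal is correct and follows essentially the same approach as the paper: perturb only the missing-species blocks $A_i$, $i\in J$, by a scalar factor slightly less than one so that $\rr_i^\delta(\mu)<0$ strictly, leave the resident blocks untouched so that $M$ and $\mu$ are unchanged for the perturbed map, and then invoke a Ruelle--Shub/Pesin stable manifold theorem (using the $C^2$ hypothesis) to produce a point off $S_0$ whose orbit converges to $M$. The only cosmetic difference is that the paper localizes the scalar damping near $M$ via a smooth bump function $\psi$ (writing $A_i^\delta(x)=A_i(x)\exp(-\psi(x)\delta/2)$), which makes the verification that $A^\delta$ is a genuine $\delta$-perturbation with $S_{A^\delta}\subset N_\delta(S_A)$ immediate, whereas you use a global scaling and argue dissipativity separately; either works.
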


\begin{proof}
Choose $\delta>0$. Let $U\subset N_{\delta/2}(S_A)$ be an open neighborhood of $S_A$ such that $\overline{U}$ is forward invariant for \eqref{DYN1}. Let $V\subset W$ be open neighborhoods of $M$ such that $\overline{W}\subset U$. Let $\psi: \R^n_+\to [0,1]$ be a smooth function such that $\psi(x)=1$ for all $x\in V$ and $\psi(x)=0$ for all $x\in \R^n_+\setminus W$. Define
\[
A_i^\delta(x)=
\left\{
\begin{array}{ll}
A_i(x)\red{\exp(-\psi(x)\delta/2)}&\mbox{ if }i\notin \red{\mbox{species}(\mu)}\\
A_i(x) & \mbox{ if } i\in \red{\mbox{species}(\mu)}.
\end{array}
\right.
\]
By construction, $A^\delta$ is a $\delta$-perturbation of the model (\ref{DYN1}) associated with $A$, $M$ is a compact invariant set and $\mu$ is an ergodic measure for the dynamics $X_{t+1}=X_t A^\delta(X_t)$, and
\[
\rr_i(\mu) \le -\frac{\delta}{2} \mbox{ if } i \notin \red{\mbox{species}(\mu)}
\]
for the dynamics of $X_{t+1}=X_tA^\delta(X_t)$. By Ruelle and Shub's stable manifold theorem for maps~\citep{ruelle-shub-80}, there are points $x\in M$ such that the stable manifold of $x$ for the $\delta$-perturbation dynamics intersects the interior of $\R^n_+$.
\end{proof}

\subsection{Morse decompositions and robust permanence}

Here we state a sufficient condition for robust permanence using a characterization of permanence due to \cite{hofbauer-so-89} that involves Morse decompositions of boundary dynamics. \red{As $\Phi_A$ is not invertible, backward orbits of a point $x$ need not be unique. Consequently, a sequence $\{x_s\}_{s=0}^{-\infty}$ is a \emph{backward orbit} through $x$ if $x_0=x$ and $x_{-s+1}=\Phi_A(x_{-s})$ for all $s\ge 1$. The $\alpha$-limit set of a backward orbit $\{x_s\}_{s=0}^{-\infty}$ is $\alpha(\{x_s\})=\{y\,:\, \lim_{s_k\to -\infty} x_{s_k}=y$ for some $s_k\to - \infty\}$. } Following
\cite{hofbauer-so-89}, we define a collection of sets $\{M_1,\dots,M_k\}$ to be a \emph{Morse decomposition} for a compact invariant set $M$ if

\begin{itemize}
\item $M_1,\dots,M_k$ are pairwise disjoint, compact isolated sets for $\Phi_A$ restricted to $M$.
\item \red{For each $x\in M\setminus \cup_{i=1}^k M_i$, there is an $i$ such that $\omega(x)\subset M_i$ and for any backward orbit $\{x_s\}$ through $x$ there is a $j>i$ such that $\alpha(\{x_s\})\subset M_j$.}
\end{itemize}

 \cite{hofbauer-so-89} proved the following characterization of permanence.

\begin{theoreme}[Hofbauer \& So 1989]\label{garay-hof-so}
If $\{M_1,\dots,M_k\}$ is a Morse decomposition for $S_0 \cap \red{G_A}$, then model (\ref{DYN1}) is permanent if and only if each of the components $M_i$ are unsaturated.
\end{theoreme}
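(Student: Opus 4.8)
The plan is to prove the two implications separately: the direction ``permanent $\Rightarrow$ each $M_i$ unsaturated'' by hand, and the converse by reducing to the standard ``acyclic covering'' criterion for uniform persistence, with the one twist that it must be run for the non-invertible semiflow $\Phi_A$.

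For the first direction, assume model~\eqref{DYN1} is permanent, with $\eta>0$ as in Definition~\ref{defn}. By \textbf{H3}, $\Phi_A$ is point dissipative with global attractor $G_A$, and the maximal compact invariant subset of $S_0$ is $\Omega:=S_0\cap G_A$ (here $S_0$ is forward invariant and, using \textbf{H2}, also backward invariant, since $x^iA_i(x)=0$ iff $x^i=0$ for a nonnegative row vector $x^i$). Permanence shows that the compact set $A_+:=G_A\cap\{x:\|x^i\|\ge\eta\text{ for all }i\}$ is disjoint from $S_0$ and contains $\omega_A(x)$ for every $x\in\R^n_+\setminus S_0$. Hence $W^s(M_i)\subset S_0$: if $x\notin S_0$ then $\omega_A(x)\subset A_+$, which misses $M_i\subset S_0$. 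To see that each $M_i$ is isolated in $\R^n_+$, choose a compact neighborhood $B$ of $M_i$ disjoint from $A_+$ and small enough that $M_i$ is the maximal invariant set of $\Phi_A|_\Omega$ in $B\cap\Omega$ (possible because $M_i$ is isolated for $\Phi_A|_\Omega$). Any compact invariant $K\subset B$ lies in $G_A$; it cannot meet $\interior\R^n_+$, for then $\emptyset\neq\omega_A(x)\subset K\cap A_+\subset B\cap A_+=\emptyset$ for some $x\in K\setminus S_0$, which is absurd; so $K\subset B\cap\Omega$ and therefore $K\subset M_i$. Thus each $M_i$ is unsaturated.

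For the converse, assume each $M_i$ is unsaturated. By \textbf{H3} it suffices to show $\Phi_A$ is uniformly persistent with respect to $S_0$, equivalently that $\Omega=S_0\cap G_A$ is a uniform repeller; permanence (with the $\eta$ and $t_0(x)$ of Definition~\ref{defn}) then follows because on the compact global attractor $\mathrm{dist}(x,S_0)$ and $\min_i\|x^i\|$ are comparable. I would invoke the acyclic-covering theorem for uniform persistence in the spirit of \citet{butler-waltman-86} and \citet{hofbauer-so-89} (cf.\ \citet{smith-thieme-11}): if a compact invariant boundary set $\Omega$ carries a Morse decomposition $\{M_1,\dots,M_k\}$ into isolated invariant sets that is acyclic and in which each $M_i$ is a weak repeller for the interior, i.e.\ $W^s(M_i)\cap(\R^n_+\setminus S_0)=\emptyset$, then the semiflow is uniformly persistent with respect to $S_0$. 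The weak-repeller property is exactly the clause $W^s(M_i)\subset S_0$ of ``unsaturated'', ``$M_i$ isolated in $\R^n_+$'' gives isolation in $\Omega$ a fortiori, and acyclicity is automatic: connecting orbits in a Morse decomposition run from higher to lower index, so a cyclic chain $M_{i_1}\rightsquigarrow\cdots\rightsquigarrow M_{i_r}=M_{i_1}$ would force $i_1>\cdots>i_r=i_1$, which is impossible. The remaining items are the standing hypotheses of the abstract theorem, all supplied above: $\R^n_+$ is closed and locally compact, $S_0$ is closed and invariant, and \textbf{H3} makes $\Phi_A$ point dissipative with global attractor.

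The step I expect to be the real obstacle is the content hidden inside that abstract criterion: upgrading ``unsaturated'' --- a \emph{weak} repeller property together with isolation --- to a \emph{uniform} repeller property for each single $M_i$, and then chaining these estimates along the acyclic order to conclude that $\Omega$ itself is a uniform repeller. This is precisely where non-invertibility of $\Phi_A$ intervenes: backward orbits are not unique, so one must argue with the paper's notion of backward orbit and $\alpha$-limit set rather than a genuine time-reversed flow, whereas the classical Butler--Waltman / Conley-index arguments are written for flows or homeomorphisms. The feature that should rescue the argument is that the Morse-decomposition definition quantifies over \emph{all} backward orbits through each point of $M\setminus\bigcup_iM_i$, which is exactly what is needed to exclude hidden cyclic chains and to run the induction over Morse levels for a non-invertible map; I would either quote a sufficiently general form of the acyclicity theorem or re-establish it in this setting (the work the later proof section is designed to carry out).
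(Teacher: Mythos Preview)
The paper does not prove this theorem at all: it is stated as a known result and attributed to \citet{hofbauer-so-89}, with the sentence ``\cite{hofbauer-so-89} proved the following characterization of permanence'' preceding the statement. There is no proof in the paper to compare against.

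Your sketch is in the right spirit, and the forward direction is essentially correct. For the converse, your concern about non-invertibility is misplaced: the Hofbauer--So paper is precisely about discrete-time maps that need not be homeomorphisms, and their Theorem~2.1 (which the present paper invokes later, in the proof of Theorem~\ref{thm:main}) already handles the non-invertible case. So the ``real obstacle'' you anticipate --- rewriting the Butler--Waltman/Conley argument for non-unique backward orbits --- is exactly the content of the cited reference, not something the present paper re-derives. In short, your proposal supplies a proof where the paper supplies only a citation; if you want to match the paper, a one-line reference to \citet{hofbauer-so-89} suffices.
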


Then Theorem \ref{thm:main} and \ref{garay-hof-so} imply the following result:

\begin{theoreme}\label{thm:robust}
If $\{M_1,\dots,M_k\}$ is a Morse decomposition for $S_0\red{\cap G_A}$ and condition (i) of Theorem \ref{thm:main} holds for each of the components of the Morse decomposition, then model \eqref{DYN1} is robustly permanent.
\end{theoreme}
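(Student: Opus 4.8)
The plan is to deduce robust permanence by combining the Hofbauer--So characterization (Theorem \ref{garay-hof-so}) with the robust unsaturation provided by Theorem \ref{thm:main}, and then handling the uniformity of the perturbation size and the repulsion constant $\eta$. First I would observe that, by Theorem \ref{thm:main}, condition (i) implies that each Morse component $M_i$ is \emph{robustly} unsaturated: there is $\delta_i>0$ such that for every $\delta_i$-perturbation $\Phi_{A^\delta}$ of $\Phi_A$, the continuation $M_i(\delta)$ is unsaturated for the perturbed system. Set $\delta_0=\min_{1\le i\le k}\delta_i>0$. The crux is then to show that for every $\delta\le\delta_0$ (after possibly shrinking $\delta_0$ further), the continuations $\{M_1(\delta),\dots,M_k(\delta)\}$ form a Morse decomposition for $S_0\cap G_{A^\delta}$, where $G_{A^\delta}$ is the global attractor of the perturbed map; once that is established, Theorem \ref{garay-hof-so} applied to the perturbed system gives that each $\delta$-perturbation is permanent.

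For the persistence of the Morse structure under perturbation, I would argue as follows. The extinction set $S_0$ is invariant under every $\Phi_{A^\delta}$ (a perturbation of the form $X_tA^\delta(X_t)$ with $A^\delta$ block diagonal keeps each face $\{x^i=0\}$ invariant), so the boundary dynamics on $S_0$ are genuinely perturbed boundary dynamics. The sets $M_i(\delta)$ remain pairwise disjoint, compact and isolated for $\Phi_{A^\delta}$ restricted to $S_0\cap G_{A^\delta}$ for $\delta$ small, because isolating neighborhoods persist under $C^0$-small perturbation (this is exactly the continuation property recalled in the excerpt) and disjointness is an open condition. The ordering/gradient-like property of the Morse decomposition is the delicate point: I would use the standard fact that a Morse decomposition of a compact invariant set is equivalent to the existence of a complete Lyapunov-type ordering, and that such orderings are upper semicontinuous under perturbation. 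Concretely, for $x\in (S_0\cap G_{A^\delta})\setminus\bigcup_j M_j(\delta)$ one shows $\omega_{A^\delta}(x)$ lies in some $M_i(\delta)$ and every backward orbit has $\alpha$-limit in some $M_j(\delta)$ with $j>i$, arguing by contradiction: a sequence $\delta_n\downarrow 0$ with violating points $x_n$ would, by compactness of $G_A\cup\bigcup_\delta G_{A^\delta}$ (which is uniformly bounded by \textbf{H3} applied to $S_{A^\delta}\subset N_\delta(S_A)$) and by $C^0$-convergence of orbit segments on compact time windows, produce a limiting trajectory of $\Phi_A$ on $S_0\cap G_A$ violating the Morse property of $\{M_i\}$, a contradiction.

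Finally I would address the \emph{uniform} $\eta$: the definition of robust permanence requires a common repulsion constant. Here I would invoke the quantitative content of the unsaturation arguments behind Theorem \ref{thm:main}, namely that the average-Lyapunov-function machinery (condition (iii), which is equivalent to (i)) produces, for $\delta$ small, a single $p$-weighted function $P(x)=\prod_i\|x^i\|^{p_i}$ whose long-term growth along the perturbed dynamics is bounded below by a positive constant uniformly in $x$ near $S_0$ and uniformly in $\delta\le\delta_0$ — this follows because $\sum_i p_i\rr_i(\mu)$ is, by Theorem \ref{thm:main} and continuity of Lyapunov exponents of the irreducible blocks in the matrices (cf.\ the cited Ruelle theory), bounded away from $0$ on the compact set of invariant measures supported near $S_0$, uniformly for small $\delta$. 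A standard compactness/average-Lyapunov-function argument then yields the same $\eta>0$ for all $\delta$-perturbations. The main obstacle I anticipate is precisely this last step — extracting a \emph{uniform-in-$\delta$} repulsion neighborhood rather than merely a $\delta$-dependent one — since it requires that the estimates underlying robust unsaturation be genuinely uniform over the family of perturbations, which in turn rests on the uniform continuity of the dominant Lyapunov exponents $\rr_i$ as functionals of $(A^\delta,\mu)$; I would treat the permanence-to-uniform-$\eta$ passage carefully, likely by a Baire/compactness argument on the space of $\delta$-perturbations combined with the fact that the isolating neighborhoods and the Lyapunov function can be chosen independently of $\delta$ for $\delta\le\delta_0$.
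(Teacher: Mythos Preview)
Your overall strategy --- robust unsaturation of each $M_i$ via Theorem~\ref{thm:main}, persistence of the Morse structure under perturbation, then Hofbauer--So (Theorem~\ref{garay-hof-so}) for each perturbed system --- is exactly the paper's. Two of your steps, however, are handled differently, and in one case the paper's route is substantially cleaner.

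For the persistence of the Morse decomposition, the paper does not use a limiting/compactness contradiction argument as you sketch. Instead it proves a dedicated Lemma (Lemma~\ref{lemma:robust}) using the attractor-filtration characterization of Morse decompositions: a Morse decomposition of $M$ corresponds to a chain $\emptyset=A_0\subset A_1\subset\cdots\subset A_k=M$ of attractors with $M_i=A_i\cap A_{i-1}^*$, and since attractor--repeller pairs continue under small perturbation, so do the $A_i$ and hence the $M_i$. One point your sketch misses: some continuations $M_i(\delta)$ may be empty, so one only obtains that a \emph{subset} $\{M_{i_1}(\delta),\dots,M_{i_l}(\delta)\}$ is a Morse decomposition of $M(\delta)$. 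Your limiting argument could probably be patched to handle this, but the filtration approach makes it automatic.

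The real divergence is your treatment of the uniform $\eta$. You propose to extract a uniform-in-$\delta$ lower bound on $\sum_i p_i\rr_i$ and push it through the average-Lyapunov-function machinery; you correctly flag this as the delicate point, and indeed making those estimates genuinely uniform over a family of perturbations would require substantial work (uniform continuity of dominant Lyapunov exponents as functionals of $(A^\delta,\mu)$ is not free in this non-invertible setting). The paper bypasses this entirely with a short Conley-theoretic argument: once the unperturbed system is known to be permanent, the interior global attractor $G_A^0=G_A\setminus S_0$ and the boundary set $R=G_A\cap S_0$ form an attractor--repeller pair for $G_A$; choosing an isolating neighborhood $N$ of $G_A^0$ with $N\cap S_0=\emptyset$, the continuation $G_A^0(\delta)$ remains inside $N$ for all small $\delta$, which immediately yields a common $\eta$ (namely the distance from $N$ to $S_0$). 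So the ``main obstacle'' you anticipate disappears once you recognize that permanence of the unperturbed system already gives you a robust attractor--repeller splitting.
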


The following lemma is used in the proof of Theorem \ref{thm:robust}
\begin{lemma}\label{lemma:robust}
\blue{Let $M\subset S_0$ be an isolated set and $\{M_1,\dots,M_k\}$ be a Morse decomposition of $M$ for the map $\Phi_A$ restricted to $S_0$. Then, for sufficiently small $\delta>0$, there is an non empty subset $\{i_1,\dots,i_l\}\subset \{1,\dots,k\}$ such that the set of continuation $\{M_{i_1}(\delta),\dots,M_{i_l}(\delta)\}$ is a Morse decomposition of the continuation $M(\delta)$ for the map $\Phi_{A^\delta}$ restricted to $S_0$.}
\end{lemma}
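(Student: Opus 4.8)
The plan is to transfer the three structural facts that define a Morse decomposition of $M$ on $S_0$ to the continuations under a small perturbation, using the robustness of isolated invariant sets stated earlier (the continuation $M(\delta)$ and $M_i(\delta)$ are well defined for small $\delta$ via common isolating neighborhoods). First I would fix pairwise-disjoint isolating neighborhoods $N, N_1,\dots,N_k$ of $M, M_1,\dots,M_k$ respectively, arranged so that the $N_i$ are pairwise disjoint and contained in $N$; for $\delta$ small enough all of $M(\delta),M_1(\delta),\dots,M_k(\delta)$ are defined as the maximal invariant sets of $\Phi_{A^\delta}$ in these neighborhoods, they are compact and isolated, they lie in $N_i\subset N$, and (by upper semicontinuity of the continuation) they remain pairwise disjoint and contained in $M(\delta)$. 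Note, however, that some $M_i(\delta)$ may be empty, which is exactly why the lemma only claims a \emph{subset} $\{i_1,\dots,i_l\}$ survives; I would discard the empty ones and let $\{i_1,\dots,i_l\}$ index the nonempty continuations (nonemptiness of at least one of them follows because $M(\delta)\neq\emptyset$ and $M(\delta)=\cup_j M_j(\delta)\cup\{\text{connecting orbits}\}$, established below).

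The heart of the argument is the gradient-like ordering condition: for every $x\in M(\delta)\setminus\cup_j M_{i_j}(\delta)$, $\omega(x)$ lies in some $M_{i_j}(\delta)$ and every backward orbit has $\alpha$-limit set in some $M_{i_{j'}}(\delta)$ with $j'>j$. To get the $\omega$-limit statement I would argue by contradiction and compactness: if for a sequence $\delta_n\downarrow 0$ there were points $x_n\in M(\delta_n)$ whose forward orbits under $\Phi_{A^{\delta_n}}$ dwell, infinitely often, outside every $N_i$-neighborhood of the $M_i$'s (equivalently, $\omega(x_n)$ meets the compact "gap" set $K=M\setminus\cup_i \interior N_i$ in the limit), then passing to limits of the orbit segments — using \textbf{H1}, \textbf{H3}, continuity of $(\delta,x)\mapsto\Phi_{A^\delta}(x)$, and the fact that $\Phi_{A^{\delta_n}}\to\Phi_A$ uniformly on $N_1(S_A)$ — one produces a point $x_\infty\in M$ with $\omega_A(x_\infty)$ meeting $K$, contradicting that $\{M_i\}$ is a Morse decomposition of $M$ (whose defining property forces $\omega_A(x_\infty)\subset M_i$ for some $i$, hence eventually inside $\interior N_i$). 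The same compactness/limiting scheme applied to backward orbits, together with the strict inequality $j>i$ in the unperturbed ordering (which is an \emph{open} condition once one works with the fixed neighborhoods $N_i$), yields the perturbed ordering with $j'>j$; the point is that the combinatorial "height function" $i\mapsto$ (index of the $M_i$) is locally constant under perturbation because the relevant containments and the no-cycle structure are captured by the fixed disjoint neighborhoods. Finally $M(\delta)=\cup_j M_{i_j}(\delta)\cup\{\text{connecting orbits between them}\}$ follows: any $x\in M(\delta)$ not in a continuation has, by the above, $\omega(x)$ and (some) $\alpha(x)$ in continuations, so $x$ is a connecting orbit; this also shows at least one $M_{i_j}(\delta)$ is nonempty.

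The main obstacle I expect is the backward-orbit/$\alpha$-limit part, because $\Phi_A$ is noninvertible: backward orbits are not unique, they may fail to exist through some points, and "$\alpha$-limit set of a backward orbit" is sensitive to which branch is chosen. The delicate claim is that for the perturbed map every backward orbit of a connecting point must ascend strictly in the Morse order — I would handle this by noting that any backward orbit of $\Phi_{A^\delta}$ through $x\in M(\delta)$ stays in the isolating neighborhood $N$ (else it would exit and not return, contradicting maximality of $M(\delta)$ in $N$), then apply the compactness limiting argument to the backward orbit segments to produce an honest backward orbit of $\Phi_A$ in $M$, invoke the unperturbed Morse property for \emph{that} backward orbit, and push the resulting strict inequality back to small $\delta$ using the separation of the $N_i$. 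A secondary technical nuisance is bookkeeping the dependence of "sufficiently small $\delta$" on the finitely many neighborhoods and on the finitely many pairwise comparisons in the order; since everything is finite, a single $\delta_0>0$ works, and I would state it that way.
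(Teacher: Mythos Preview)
Your approach is genuinely different from the paper's, and the limiting argument you sketch for the $\omega$-limit containment has a real gap. You claim that if $x_n\in M(\delta_n)$ have $\omega_{\delta_n}(x_n)$ meeting the gap set $K=N\setminus\cup_i\interior N_i$, then a limit point $x_\infty$ has $\omega_A(x_\infty)\cap K\neq\emptyset$. But $\omega$-limit sets are not lower semicontinuous under perturbation: the forward $\Phi_A$-orbit of $x_\infty$ may diverge completely from the forward $\Phi_{A^{\delta_n}}$-orbits of $x_n$ over long times, so nothing forces $\omega_A(x_\infty)$ to meet $K$. What limiting along full orbits of points $y_n\in\omega_{\delta_n}(x_n)\cap K$ actually produces is a point $y_\infty\in M\setminus\cup_i M_i$ with a full $\Phi_A$-orbit in $M$; such connecting orbits are perfectly compatible with the Morse structure of $M$, so no contradiction arises. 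The same defect undermines your argument for the strict ordering $j'>j$: you assert this is ``an open condition once one works with the fixed neighborhoods $N_i$'', but the no-cycle property is not encoded by disjointness of the $N_i$ alone---it reflects the one-sided attractor structure, which your orbit-by-orbit limiting scheme never invokes.

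The paper sidesteps all of this by using the equivalence (Theorem~3.10 in \cite{patrao-07}) between Morse decompositions and filtrations by attractors $\emptyset=A_0\subset A_1\subset\cdots\subset A_k=M$, with $M_i=A_i\cap A_{i-1}^*$. The robust objects are the attractor--repeller pairs $(A_i,A_i^*)$: each continues to an attractor--repeller pair $(A_i(\delta),A_i^*(\delta))$ for $\Phi_{A^\delta}$ (Theorem~5 in \cite{mischaikow-99}, valid for maps), the inclusions $A_i(\delta)\subset A_{i+1}(\delta)$ persist because they are witnessed by nested isolating neighborhoods, and after discarding indices where the inclusion is not strict one obtains an attractor filtration of $M(\delta)$, hence a Morse decomposition by Patr\~ao's theorem again. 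The $\omega$/$\alpha$-containment and the strict ordering then come for free from the attractor--repeller structure. If you want to rescue your direct approach, the missing ingredient is exactly this: you must continue the \emph{attractors} $A_i$ (equivalently, a Lyapunov function for the filtration), not merely the isolated sets $M_i$.
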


\begin{proof}
Let $M\subset S_0$ be an isolated subset and $\{M_1,\dots,M_k\}$ be a Morse decomposition of $M$ for the map $\Phi_A$ restricted to $S_0$. Then, from Theorem~3.10 in \cite{patrao-07}, there exists a strictly increasing sequence of attractors in $M$

\begin{equation}\label{eq:Robust_1}
  \emptyset=A_0 \subset A_1\subset ... \subset A_k=M,
\end{equation}

\noindent
such that

\begin{equation*}
  M_i=A_i\cap A_{i-1}^*,\: i=1,...,k,
\end{equation*}

\noindent
where $A_i^*=\{x\in M\mid \omega(x)\cap A_i=\emptyset\}$ is the repeller corresponding to $A_i$. Both $A_i$ and $A_{i-1}^*$ are compact, isolated invariant sets, for each $i=1,...,k$.

For sufficiently small $\delta>0$, $\left(A_i({\delta}),A_i^*({\delta})\right)$ is an attractor-repeller pair for the map $\Phi_{A^\delta}$ in $S_0$. \blue{This is a consequence of Theorem 5 in \cite{mischaikow-99} which remains valid for maps}. Define

\begin{equation*}
  M_i^{\delta}=A_i({\delta})\cap {A_{i-1}^*}({\delta}),\: i=1,...,k,
\end{equation*}

\noindent
and let $N_i$ and $N_{i-1}^*$ be isolating neighborhoods of $A_i$ and $A_{i-1}^*$, respectively. In particular, $N_i \cap N_{i-1}^*$ is an isolated neighborhood of $M_i$. By definition of a continuation, for sufficiently small $\delta$, $A_i(\delta)$ and $A_{i-1}^*(\delta)$ are the largest compact, invariant sets in $N_i$ and $N_{i-1}^*$, respectively. Hence $M_i^{\delta}$ is the continuation $M_i(\delta)$ of $M_i$.
Equation (\ref{eq:Robust_1}) implies that
\begin{equation*}
  N_0 \subset N_1\subset ... \subset N_k,
\end{equation*}
which implies that
\begin{equation}\label{eq:robust2}
  \emptyset=A_0(\delta) \subset A_1(\delta)\subset ... \subset A_{k}(\delta)=M(\delta).
\end{equation}
Note that the first inclusion in equation (\ref{eq:robust2}) is strict, since $M_1=A_1$ is an attractor and then it has a nonempty continuation. \blue{Hence, there exists a non-empty subset $\{i_1,...,i_l\}\subset \{1,...,k\}$ such that the inclusions in (\ref{eq:robust2}) restricted to the indices in this subset are strict.} Thus, again from Theorem~3.10 in \cite{patrao-07}, $\{M_{i_1}(\delta),\dots,M_{i_l}(\delta)\}$ is a Morse decomposition of $M(\delta)$ for the map $\Phi_{A^\delta}$.

\end{proof}
\begin{proof}[Proof of Theorem \ref{thm:robust}]
Theorems \ref{thm:main} and \ref{garay-hof-so} and Lemma \ref{lemma:robust} imply that, for sufficiently small $\delta>0$, the model \eqref{DYN1}, with $A$ replaced by $A^{\delta}$, is permanent. \blue{Let $R=G_A\cap S_0$ and $G_A^0=G_A\backslash S_0$. Since the model (\ref{DYN1}) is permanent, $G_A^0$ is the interior global attractor of $\Phi_A$. Then we have that $(G^0_A,R)$ is an attractor-repeller pair for $G_A$. Let $N\subset \mathbb{R}^n_+$ with $N\cap S_0=\emptyset$ be an isolating neighborhood of $G^0_{A}$. Hence, for sufficiently small $\delta$, $(G^0_{A}(\delta),R(\delta))$ is an attractor-repeller pair for $G_A(\delta)$, and $G^0_{A}(\delta) \subset N$. Since the model (\ref{DYN1}), corresponding to $\delta$, is permanent, $R(\delta)=G_A(\delta)\cap S_0$. Hence there is a common region of repulsion around the boundary which concludes the proof.}
\end{proof}

For two species models, Theorem~\ref{thm:robust} and Proposition~\ref{prop:nec} provide a precise characterization of robust permanence. The twice continuously differentiable assumption is only required to show the necessity of the conditions of the proposition for robust permanence.

\begin{corollary}\label{for:2species} Assume $ m=2$ and $x\mapsto x A(x)$ is twice continuously differentiable. Then model \eqref{DYN1} is robustly permanent if and only if
\begin{itemize}
\item $\max_i \rr_i (0)>0$,
\item $\rr_2(\mu)>0$ for any ergodic measure $\mu$ with $\red{\mbox{species}(\mu)}=\{1\}$, and
\item $\rr_1(\mu)>0$ for any ergodic measure $\mu$ with $\red{\mbox{species}(\mu)}=\{2\}$.
\end{itemize}
\end{corollary}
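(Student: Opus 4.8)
The plan is to derive both implications from the results already established, specializing to $m=2$. For the ``if'' direction, I would invoke Theorem~\ref{thm:robust}: I need a Morse decomposition $\{M_1,\dots,M_k\}$ of $S_0 \cap G_A$ for which condition (i) of Theorem~\ref{thm:main} holds on each component. When $m=2$, the extinction set decomposes naturally as $S_0 = \{x : \|x^1\|=0\} \cup \{x : \|x^2\|=0\}$, and the portion of $G_A$ in $S_0$ splits into the origin together with the two ``single-species'' faces. Concretely I would take the Morse decomposition consisting of the component containing the origin (the set where both species are extinct) and the two components that are the global attractors of $\Phi_A$ restricted to each of the two faces $\{x^2 = 0\}$ and $\{x^1 = 0\}$; the ordering is the usual one in which the origin is the ``lowest'' Morse set, since on each face the missing species cannot invade backward into the face. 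On the origin component, any invariant measure is supported at $0$, so condition (i) reads $\max_i \rr_i(0) > 0$, which is the first hypothesis. On the face $\{x^2=0\}$, every invariant measure $\mu$ has $\mbox{species}(\mu) \subseteq \{1\}$, and $\rr_1(\mu) = 0$ by the remarks following the definition of $\rr_i$ (since species $1$ is present and persists on a compact attractor), so $\rr_*(\mu) = \max(\rr_1(\mu), \rr_2(\mu)) = \max(0, \rr_2(\mu)) > 0$ exactly when $\rr_2(\mu) > 0$; by ergodic decomposition it suffices to check this for ergodic $\mu$, giving the second hypothesis (the case where $\mu$ is the Dirac at the origin is already covered by the first). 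Symmetrically the third hypothesis handles $\{x^1=0\}$. Thus Theorem~\ref{thm:robust} yields robust permanence.

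For the ``only if'' direction I would argue by contrapositive using Proposition~\ref{prop:nec}. Suppose one of the three conditions fails. If $\max_i \rr_i(0) \le 0$, then the Dirac measure $\delta_0$ at the origin is an ergodic measure with $\mbox{species}(\delta_0) = \emptyset$ and $\rr_i(\delta_0) \le 0$ for all $i \notin \emptyset$, so Proposition~\ref{prop:nec} (with $M = \{0\}$, which is compact and isolated in $S_0$ under \textbf{H2}–\textbf{H3}) shows $\{0\}$ is not robustly unsaturated, hence \eqref{DYN1} is not robustly permanent. If instead there is an ergodic $\mu$ with $\mbox{species}(\mu) = \{1\}$ and $\rr_2(\mu) \le 0$, then for $M = \mbox{supp}(\mu) \subset \{x^2=0\}$ (compact, and isolated in $S_0$ because the dynamics on the face have a global attractor and $M$ is an isolated invariant subset of it — here I should be slightly careful and, if necessary, enlarge $M$ to a suitable isolated invariant set containing $\mbox{supp}(\mu)$) the hypothesis of Proposition~\ref{prop:nec} holds with the only index outside $\mbox{species}(\mu)$ being $i=2$, so $M$ is not robustly unsaturated and again permanence fails robustly; the case $\mbox{species}(\mu) = \{2\}$ is symmetric. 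This uses the twice-differentiability hypothesis, which is exactly what Proposition~\ref{prop:nec} requires.

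The main obstacle I anticipate is the bookkeeping around isolation and the Morse structure rather than any deep new idea. Two points need care. First, in the ``if'' direction I must verify that the chosen triple really is a Morse decomposition of $S_0 \cap G_A$ in the sense defined before Theorem~\ref{garay-hof-so}: the $\omega$-limit of any boundary point lies in one of the three sets, and every backward orbit has $\alpha$-limit in a strictly higher-indexed set — this is where \textbf{H2} (primitivity, forcing interior dynamics on each face to be genuinely one-signed) and \textbf{H3} (dissipativity, giving the global attractors on the faces) are used, and one should note that there are no invariant measures on $S_0 \cap G_A$ other than those supported on these three Morse sets, so checking condition (i) on the three sets suffices. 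Second, in the ``only if'' direction one must ensure the set $M$ fed into Proposition~\ref{prop:nec} is isolated in $\R^n_+$; taking $M$ to be the global attractor of the face dynamics (which contains $\mbox{supp}(\mu)$) handles this cleanly, at the cost of checking that $\rr_i(\mu) \le 0$ for $i \notin \mbox{species}(\mu)$ is the only requirement and that $\mu(M)=1$ still holds. Once these two verifications are in place, the corollary follows immediately by combining Theorems~\ref{thm:main} and \ref{thm:robust} with Proposition~\ref{prop:nec}.
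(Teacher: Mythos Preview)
Your argument is essentially correct but takes a more elaborate route than the paper. For the ``if'' direction the paper does not build a multi-piece Morse decomposition at all: it takes the trivial decomposition $M_1 = S_0 \cap G_A$ and verifies condition (ii) of Theorem~\ref{thm:main}. Since $\max_i \rr_i(0)>0$ one can pick $p_1,p_2>0$ with $p_1\rr_1(0)+p_2\rr_2(0)>0$; for any other ergodic $\mu$ on $S_0\cap G_A$ one has $\mbox{species}(\mu)=\{j\}$, whence $\rr_j(\mu)=0$ by Proposition~\ref{prop_lambdanul} and $\sum_i p_i\rr_i(\mu)=p_\ell\rr_\ell(\mu)>0$ for $\ell\neq j$. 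This sidesteps entirely the bookkeeping you correctly flag as the main obstacle: existence, isolation, and ordering of the three pieces. Your approach makes the geometry of the invasion conditions more visible, but it forces a case analysis on the signs of $\rr_1(0),\rr_2(0)$ even to know how many Morse pieces there are, and your claim that the origin is the ``lowest'' Morse set is backward whenever $\rr_i(0)>0$ (the origin then repels along face $i$ and must sit \emph{above} that face's attractor in the ordering of Theorem~\ref{garay-hof-so}). For the ``only if'' direction the paper likewise uses the single choice $M=S_0\cap G_A$ in Proposition~\ref{prop:nec}, which automatically contains the support of whichever offending $\mu$ arises and is trivially isolated in $S_0$, avoiding your case-by-case selection of $M$ and the attendant isolation checks.
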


\begin{proof}
Suppose that the three conditions hold. Since $\max_i \rr_i (0)>0$, we can choose positive $p_1,p_2$ such that $\sum_i p_i \rr_i(0)>0$. Let $\mu$ be an ergodic measure $\mu$ supported in $\red{G_A}\cap S_0$. We will show that $\sum_i p_i \rr_i(\mu)>0$. If $\mu=\delta_0$, then we are done \red{as $r_i(\mu)=r_i(0)$}. If $\mu\neq\delta_0$, then $\red{\mbox{species}(\mu)}=\{j\}$ for some $j\in \{1,2\}$. Since $\rr_j(\mu)=0$, we have $\sum_i p_i \rr_i(\mu)=p_\ell\rr_\ell(\mu)$ where $\ell\neq j$. By the second and third conditions, $\rr_\ell(\mu)>0$. Hence, $\sum_i p_i \rr_i(\mu)>0$ for all ergodic $\mu$ supported in $S_0\red{\cap G_A}$. Applying Theorem~\ref{thm:robust} with Morse decomposition $M_1= S_0\red{\cap G_A}$ completes the proof of this implication.

Now suppose one of the conditions doesn't hold. Then Proposition~\ref{prop:nec} with $M=S_0\red{\cap G_A}$ implies model~\eqref{DYN1} is not robustly permanent.
\end{proof}

Theorem~\ref{thm:robust} and Proposition~\ref{prop:nec} also characterize models \eqref{DYN1} for which the model and its restriction to any subset of species is robustly permanent. That is, for any non-empty set $I\subseteq \{1,...,m\}$, the system \eqref{DYN1} restricted to $\prod_{i\in I}\mathbb{R}^{n_i}_+$ is robustly permanent. This characterization is the discrete-time extension of \citep[Theorem 3.3]{jmaa-02} to structured population models. In particular, we have the following

\begin{corollary}
Model \eqref{DYN1} and all of its submodels are robustly permanent if, for all ergodic probability measures $\mu$ with support in $S_0$,
\begin{equation}\label{eq:Subsyst1}
\rr_i(\mu)>0 \ \text{ for all } i \in \{1,\dots,m\} \backslash \red{\mbox{species}(\mu)}.
\end{equation}

Conversely, if $\Phi_A$ is twice continuously differentiable and \eqref{DYN1} and all of its subsystems are robustly permanent then, for all ergodic probability measures $\mu$ with support in $S_0$, \eqref{eq:Subsyst1} holds.

\end{corollary}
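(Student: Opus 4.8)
The plan is to obtain both implications from Theorems~\ref{thm:main} and~\ref{thm:robust} (used with the trivial one‑block Morse decomposition) together with Proposition~\ref{prop:nec}, after recording two elementary facts that let us pass freely between the full model and its subsystems. \emph{Fact 1 (faces are invariant copies of subsystems).} For non‑empty $I\subseteq\{1,\dots,m\}$ the coordinate face $F_I=\{x\in\R^n_+:x^j=0\text{ for }j\notin I\}$ is forward invariant, and $\Phi_A$ restricted to $F_I$ is, after the obvious identification $F_I\cong\prod_{i\in I}\R^{n_i}_+$, exactly the map of the $I$‑subsystem, which again satisfies \textbf{H1}--\textbf{H3}. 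Moreover, for $i\in I$ and $x\in F_I$ the products $A_i(X_0)\cdots A_i(X_{t-1})$ coincide whether computed in the subsystem or in the full model; hence an ergodic measure $\mu$ of the $I$‑subsystem, viewed on $F_I$, is an ergodic $\Phi_A$‑measure with the same $\mbox{species}(\mu)\subseteq I$ and the same $\rr_i(\mu)$ for $i\in I$, and conversely every ergodic $\Phi_A$‑measure with $\mbox{species}(\mu)\subseteq I$ arises this way. \emph{Fact 2 (boundary attractors are admissible Morse sets).} Since each $A_i(X)$ has the sign pattern of an irreducible matrix (\textbf{H2}), $\|X^i_t\|>0$ forces $\|X^i_{t+1}\|=\|X^i_tA_i(X_t)\|>0$, so $\R^n_+\setminus S_0$ is forward invariant and $\Phi_A^{-1}(S_0)\subseteq S_0$; with dissipativity (\textbf{H3}) this makes $R:=S_0\cap G_A$ a compact $\Phi_A$‑invariant set which is the global attractor of $\Phi_A$ restricted to $S_0$, hence a compact isolated set for $\Phi_A$ restricted to $S_0$, and $\{R\}$ is then a Morse decomposition of $R$. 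The same holds verbatim for each subsystem; write $R_I:=S_0^I\cap G_A^I$ for the boundary attractor of the $I$‑subsystem (with $S_0^I,G_A^I$ its extinction set and global attractor).

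For the forward implication, fix a non‑empty $I$ and let $\mu$ be any ergodic measure of the $I$‑subsystem supported in $S_0^I$. Then $\mbox{species}(\mu)\subsetneq I$, so $I\setminus\mbox{species}(\mu)\neq\emptyset$, and by Fact~1 $\mu$ is an ergodic $\Phi_A$‑measure supported in $S_0$; the hypothesis~\eqref{eq:Subsyst1} gives $\rr_i(\mu)>0$ for all $i\in\{1,\dots,m\}\setminus\mbox{species}(\mu)$, in particular for all $i\in I\setminus\mbox{species}(\mu)$. Since also $\rr_i(\mu)=0$ for $i\in\mbox{species}(\mu)$ (by \textbf{H3} and Poincar\'e recurrence, as noted after the definition of $\rr_i$), we get $\sum_{i\in I}\rr_i(\mu)>0$. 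Thus condition~(ii) of Theorem~\ref{thm:main}, with all weights equal to $1$, holds for the one‑block Morse decomposition $\{R_I\}$ of $R_I$, and Theorem~\ref{thm:robust} yields robust permanence of the $I$‑subsystem. Applying this to every non‑empty $I\subseteq\{1,\dots,m\}$, including $I=\{1,\dots,m\}$, proves the first assertion.

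For the converse, suppose \eqref{eq:Subsyst1} fails: there is an ergodic $\Phi_A$‑measure $\mu$ supported in $S_0$ and an index $i_0\in\{1,\dots,m\}\setminus\mbox{species}(\mu)$ with $\rr_{i_0}(\mu)\le 0$. Set $I=\mbox{species}(\mu)\cup\{i_0\}$. By Fact~1 we may regard $\mu$ as an ergodic measure of the $I$‑subsystem; it is supported in $S_0^I$ (species $i_0\in I$ is absent $\mu$‑a.s.), $I\setminus\mbox{species}(\mu)=\{i_0\}$, and $\rr_{i_0}(\mu)\le 0$, so $\rr_i(\mu)\le 0$ for every $i\in I\setminus\mbox{species}(\mu)$. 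Moreover $\mu(R_I)=1$ since an invariant measure is supported on the global attractor. As $\Phi_A$, hence $\Phi_{A^I}$, is twice continuously differentiable, Proposition~\ref{prop:nec} applies to the $I$‑subsystem with $M=R_I$: for every $\delta>0$ there is a $\delta$‑perturbation of the $I$‑subsystem and a point with all species of $I$ present whose $\omega$‑limit lies in $R_I\subseteq S_0^I$, so that perturbed subsystem is not permanent; hence the $I$‑subsystem is not robustly permanent, contradicting the hypothesis. This proves~\eqref{eq:Subsyst1}.

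The only genuinely delicate point is Fact~2 — that $S_0\cap G_A$, and each $S_0^I\cap G_A^I$, is a compact \emph{isolated} invariant set for the boundary dynamics — which rests on primitivity (no species can go extinct in finite time) together with dissipativity \textbf{H3}; once this is granted, the argument is bookkeeping: the transfer of ergodic measures and their Lyapunov exponents between the invariant faces and the full model (Fact~1), plus direct appeals to Theorems~\ref{thm:main}/\ref{thm:robust} and Proposition~\ref{prop:nec}. I expect no further obstacle.
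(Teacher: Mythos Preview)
Your proof is correct and follows essentially the same route as the paper's: for each subsystem you verify condition~(ii) of Theorem~\ref{thm:main} with unit weights (using $\rr_i(\mu)=0$ for $i\in\mbox{species}(\mu)$ and the hypothesis for the remaining $i$) and invoke Theorem~\ref{thm:robust}, and for the converse you pick $I=\mbox{species}(\mu)\cup\{i_0\}$ and appeal to Proposition~\ref{prop:nec}, exactly as the paper does. The only cosmetic difference is that you use the trivial one-block Morse decomposition $\{R_I\}$ where the paper takes an arbitrary Morse decomposition $\{M_1,\dots,M_k\}$ of $S_0^I\cap G_A$; your Fact~2 is in fact routine (the global attractor of $\Phi_A|_{S_0}$ is automatically its maximal compact invariant set, hence isolated there), so no genuine delicacy remains.
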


\begin{proof}
Let $\emptyset\neq I\subseteq \{1,...,m\}$. Let $S_A^I=\{x\in S_A \mid x^i=0,\: \forall \: i\not \in I\}$ and $S_0^I=\{x\in S_A^I \mid \prod_{i\in I}||x^i||=0\}$. Consider the restriction of \eqref{DYN1} to $S^I_A$:

\begin{equation}\label{DYN_Restrict}
X_{t+1}^i=X_t^i A_i(X_t), \: i\in I.
\end{equation}

\noindent
Let $\{M_1,...,M_k\}$ be a Morse decomposition of $S_0^I\red{\cap G_A}$. Fix $j\in \{1,\dots,k\}$, and let $\mu$ be an ergodic probability measure with $\mu(M_j)=1$. Note that $\red{\mbox{species}(\mu)}\neq I$. Then by Proposition \ref{prop_lambdanul}, for every $i\in \red{\mbox{species}(\mu)}$ we have that $r_i(\mu)=0$, while for every $i\in I\setminus \red{\mbox{species}(\mu)}$ we have $r_i(\mu)>0$ (by assumption \eqref{eq:Subsyst1}). Thus, condition (ii) in Theorem~\ref{thm:main} holds with $p_i=1$ for all $i\in I$. Hence $M_j$ is robustly unsaturated with respect to \eqref{DYN_Restrict}. Theorem~\ref{thm:robust} implies \eqref{DYN_Restrict} is robustly permanent.

For the converse, suppose that there exists $\mu$ an ergodic probability measure with support in $S_0$ such that $r_l(\mu)\leq 0$ for some $l \in \{1,\dots,m\} \backslash \red{\mbox{species}(\mu)}$. Let $I=\red{\mbox{species}(\mu)}\cup \{l\}$.  Proposition~\ref{prop:nec} implies \eqref{DYN_Restrict} is not robustly permanent, a contradiction.

\end{proof}
\section{Proof}
Recall some useful definitions and notations. For $d \in \N$, let $\interior \R_+^d = \{x\in \R^d_+ : \prod_i x_i >0\}$ be the interior of $\R^d_+$ and $\mathbf{M}_{d}(\R)$ be the set of all $d\times d$ matrices over $\R$. Let $M$ be a metric space, and let $\PP(M)$ be the space of Borel probability measures on $M$ endowed with the weak$^*$ toplogy. The support of a measure $\nu$ in $\PP (M)$, denoted by $\support(\nu)$, is the smallest closed \red{set $B\subset \R^n_+$ such that $\mu(B)=1$}. If $M'$ is also a metric space and $f \colon M \to M'$ is Borel measurable, then the induced linear map $f^{*} \colon \mathcal{P}(M) \to \mathcal{P}(M')$ associates with $\nu \in \mathcal{P}(M)$ the measure $f^{\ast}(\nu) \in \mathcal{P}(M')$ defined by
\[
f^\ast(\nu) (B)=\nu(f^{-1}(B))
\]
for all  Borel sets $B$ in $M'$.
If $\theta \colon M \rightarrow M$  is a continuous map, a measure $\nu \in \mathcal{P}(M)$ is called \emph{$\theta$-invariant} if $\nu (\theta^{-1}(B)) = \nu(B)$ for all Borel sets $B \in M$. A set $B\subset M$ is \emph{positively invariant} if $\theta(B) \subset B$. For every positively invariant compact set $B$, let $\PPinv(B)$ be the set of all $\theta$-invariant measures supported on $B$.

Given $x \in \R^n_+$, the \emph{empirical occupation measure} at time $t\in \R_+$ of $\{\Phi^s_A\}_{s\ge 0}$ is
\[
\Lambda_t(x) := \frac{1}{t} \sum_{s=0}^{t-1}\delta_{\Phi^s_A(x)}.
\]
where $\delta_y$ denotes a Dirac measure at $y$, i.e. $\delta_{y}(A) =1$ if $y\in A$ and $0$ otherwise for any Borel set $A \subset \R_+^n$. These empirical measures describe the distribution of the observed population dynamics up to time $t$. In particular, for any Borel set $B\subset \R^n_+$,
\[
\Lambda_t(x)(B)= \frac{\#\{ 0\le s \le t-1 | \Phi^s_A(x) \in B \}}{t}
\]
is the fraction of time that the populations spent in the set $B$.

The following propositions and lemma are crucial for the proof of Theorem~\ref{thm:main}.

\begin{proposition}\label{proposition-convergence-empiricalmeasures}
For all $x \in \interior  \R^n_+ $, every weak$^*$ limit point $\mu$ of the family of probability measures $\{\Lambda_t(x)\}_{t\in \N}$ belongs to $\Inv(\Phi_{A})(\red{G_A})$ and satisfies $\rr_i(\mu)\le 0$ for all $i$.
\end{proposition}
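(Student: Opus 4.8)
The statement has two parts: (i) every weak$^\ast$ limit point $\mu$ of $\{\Lambda_t(x)\}_t$ is $\Phi_A$-invariant with $\support(\mu)\subseteq G_A$, and (ii) $\rr_i(\mu)\le 0$ for every $i$. For (i) I would run the Krylov--Bogolyubov argument together with dissipativity. Fix $x\in\interior\R^n_+$. By \textbf{H3} there is $t_0=t_0(x)$ with $\Phi_A^t(x)\in S_A$ for $t\ge t_0$, so the forward orbit of $x$ is precompact, its closure $K_x$ is compact, and its $\omega$-limit set $\omega_A(x)\subseteq S_A$ is nonempty, compact and $\Phi_A$-invariant. Every $\Lambda_t(x)$ is supported on $K_x$, so the family is tight and has weak$^\ast$ limit points; fix $t_k\to\infty$ with $\Lambda_{t_k}(x)\to\mu$. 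For any compact neighbourhood $\bar U$ of $\omega_A(x)$, only finitely many orbit points lie outside $\bar U$, so $\Lambda_t(x)(\R^n_+\setminus\bar U)=O(1/t)$; since $\R^n_+\setminus\bar U$ is open, the portmanteau theorem gives $\mu(\R^n_+\setminus\bar U)=0$, and intersecting over a decreasing sequence of such $\bar U$ with intersection $\omega_A(x)$ yields $\mu(\omega_A(x))=1$, i.e.\ $\support(\mu)\subseteq\omega_A(x)$. Moreover $\omega_A(x)\subseteq G_A$, because for $t\ge t_0+s$ one has $\Phi_A^t(x)=\Phi_A^s(\Phi_A^{t-s}(x))\in\Phi_A^s(S_A)$, whence $\overline{\bigcup_{t\ge t_0+s}\{\Phi_A^t(x)\}}\subseteq\overline{\bigcup_{t'\ge s}\Phi_A^{t'}(S_A)}$ for every $s$; intersecting over $s$ gives the inclusion, so $\support(\mu)\subseteq G_A$. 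Invariance follows from $\int f\circ\Phi_A\,d\Lambda_t(x)-\int f\,d\Lambda_t(x)=\frac1t\big(f(\Phi_A^t(x))-f(x)\big)\to 0$ for $f\in C_b(\R^n_+)$ (using $f\circ\Phi_A\in C_b(\R^n_+)$ by \textbf{H1}): letting $k\to\infty$ gives $\int f\circ\Phi_A\,d\mu=\int f\,d\mu$ for all such $f$, i.e.\ $\mu$ is $\Phi_A$-invariant. Hence $\mu\in\Inv(\Phi_A)(G_A)$.

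For (ii), fix $i$ and set $B_s(y)=A_i(y)A_i(\Phi_A(y))\cdots A_i(\Phi_A^{s-1}(y))$ and $g_s(y)=\log\|B_s(y)\|$. The sign pattern of $B_s(y)$ is that of $P_i^s\neq 0$, so $\|B_s(y)\|>0$ always, $g_s$ is finite and, by \textbf{H1}, continuous on $\R^n_+$, hence bounded on the compact set $K_x\cup G_A$. Submultiplicativity gives the subadditive cocycle bound $g_{s+s'}(y)\le g_s(y)+g_{s'}(\Phi_A^s(y))$, so the subadditive ergodic theorem (already invoked above for the existence of $\rr_i(x)$) gives $\rr_i(\mu)=\inf_{s\ge 1}\frac1s\int g_s\,d\mu$. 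Thus it suffices to produce a constant $\kappa_x$, independent of $s$, with $\int g_s\,d\mu\le\kappa_x$ for every $s$; dividing by $s$ and letting $s\to\infty$ then gives $\rr_i(\mu)\le0$. Since $\Lambda_{t_k}(x)$ and $\mu$ are all supported in the fixed compact $K_x\cup G_A$ on which $g_s$ is continuous, $\int g_s\,d\mu=\lim_k\frac1{t_k}\sum_{r=0}^{t_k-1}g_s(\Phi_A^r(x))$, so I only need to bound $\limsup_{t\to\infty}\frac1t\sum_{r=0}^{t-1}g_s(\Phi_A^r(x))$.

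The estimate I would use rests on two observations. First, by \textbf{H1}, \textbf{H3} and compactness, the entries of $A_i(\cdot)$ in the pattern of $P_i$ are bounded above and below by positive constants on $S_A$, so by primitivity \textbf{H2} and Birkhoff's contraction theorem (weak ergodicity of products of primitive nonnegative matrices) there are $r_1=r_1(x)\in\N$ and $T_x<\infty$ such that for all $r\ge r_1$ the matrix $B_r(x)$ has all entries positive with ratio of largest to smallest entry at most $T_x$ (the finitely many factors $A_i(\Phi_A^t(x))$, $t<t_0$, evaluated outside $S_A$ only force $r_1,T_x$ to depend on $x$). A short nonnegative-matrix computation then yields, for $r\ge r_1$,
\[
g_s(\Phi_A^r(x))\le\log(n_iT_x)+g_{r+s}(x)-g_r(x).
\]
Second, since $x\in\interior\R^n_+$, the row vector $x^i$ has all entries positive, so $\|X_t^i\|=\|x^iB_t(x)\|\ge(\min_j x^{ij})\|B_t(x)\|$; combined with $\|X_t^i\|\le\sup_{y\in S_A}\|y\|$ for $t\ge t_0$ (\textbf{H3}), this forces $g_t(x)\le C_x$ for all $t$, a constant depending only on $x$. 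Summing the displayed inequality over $r=r_1,\dots,t-1$, telescoping, and using $g_r(x)\le C_x$,
\[
\sum_{r=r_1}^{t-1}g_s(\Phi_A^r(x))\le(t-r_1)\log(n_iT_x)+sC_x-D_{x,s},
\]
where $D_{x,s}:=\sum_{r=r_1}^{r_1+s-1}g_r(x)$ is fixed once $x,s$ are. Adding the finitely many terms with $r<r_1$ and dividing by $t$ gives $\limsup_{t\to\infty}\frac1t\sum_{r=0}^{t-1}g_s(\Phi_A^r(x))\le\log(n_iT_x)$, which is the constant $\kappa_x$ sought; hence $\rr_i(\mu)\le 0$.

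The main obstacle is part (ii): one cannot deduce $\rr_i(\mu)=\int\rr_i\,d\mu\le0$ from pointwise non-positivity of $\rr_i$, because $\rr_i$ is not defined — let alone continuous — on the locus $\{\|y^i\|=0\}$ of $S_0$ on which $\mu$ may be concentrated. The role of the continuous functions $g_s$ and of the Birkhoff-type uniform bound $T_x$ on long products — where \textbf{H2} is genuinely used — is precisely to trade the ill-behaved $\rr_i$ for well-behaved quantities while still extracting the non-positivity forced by dissipativity \textbf{H3}. The bookkeeping for the finitely many orbit points outside $S_A$ (affecting only vanishing time-averages and the $x$-dependent constants) and the adaptation to the reducible setting of Remark~\ref{remark} are routine.
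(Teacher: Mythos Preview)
Your argument is correct, and it takes a genuinely different route from the paper's own proof. The paper first lifts the (possibly non-invertible) dynamics $\Phi_A$ to a shift homeomorphism $\Theta$ on a trajectory space $\Gamma$, precisely so that it can invoke Ruelle's result (Proposition~\ref{prop:ruelle}) to produce a continuous dominant-direction field $u_i(\cdot)$ and the single continuous function $\bzeta_i(\gamma)=\log\|u_i(\gamma)A_i(\gamma)\|$; one then has the exact identity $\rr_i(\tilde\mu)=\int\bzeta_i\,d\tilde\mu$, and weak$^*$ convergence of the lifted empirical measures together with $\rr_i(\gamma)\le 0$ for $\gamma\in\Gamma_+\setminus\Gamma_0$ gives the conclusion. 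You bypass the trajectory-space lift and Ruelle entirely: you stay on the base, use the continuous $s$-step cocycle $g_s=\log\|B_s\|$, invoke Kingman's formula $\rr_i(\mu)=\inf_s\frac1s\int g_s\,d\mu$, and obtain the uniform bound $\int g_s\,d\mu\le\kappa_x$ from the elementary positive-matrix sandwich estimate (what you call Birkhoff contraction) combined with the dissipativity bound $g_t(x)\le C_x$. The trade-offs are clear: the paper's machinery yields an equality $\rr_i(\tilde\mu)=\int\bzeta_i\,d\tilde\mu$ that is reused elsewhere (Lemma~\ref{lemma-inequality}, Proposition~\ref{proposition:robust-persistence}, where the homeomorphism on $\Gamma$ is again essential), whereas your argument is shorter and more self-contained but delivers only the inequality $\rr_i(\mu)\le 0$ needed here. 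Two small remarks: your ``Birkhoff contraction'' claim that $B_r(x)$ has uniformly bounded entry ratios for $r\ge r_1$ is correct, but the cleanest justification is the sandwich $B_r(x)=D_x\cdot M_r\cdot C_r$ with $D_x$ a fixed strictly positive matrix (the first $t_0+s$ factors) and $C_r$ a strictly positive matrix with uniformly bounded entry ratio (the last $s$ factors, all evaluated in $S_A$); this directly bounds both row and column ratios, hence all entry ratios. And in part~(i), your support argument in fact gives the slightly stronger $\support(\mu)\subseteq\omega_A(x)$, which of course lies in $G_A$.
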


\begin{lemma}\label{lemma-inequality}
\red{Let} $x \in S_0$ \red{and $\mu$ be a  weak$^*$ limit point of the family of probability measures $\{\Lambda_t(x)\}_{t\in \N}$. Then}
\[
\rr_i(x) \ge \rr_i(\mu) \mbox{ \red{ for $i=1,\dots,m$.}}
\]
\end{lemma}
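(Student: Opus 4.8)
The plan is to exploit the subadditive structure of the log-norm cocycle together with the fact that $\mu$ arises as a weak$^*$ limit of empirical measures along the orbit of $x$. Write $B_i^{(t)}(x) = A_i(X_0)A_i(X_1)\cdots A_i(X_{t-1})$ for the matrix product along the trajectory with $X_0=x$, so that $\rr_i(x) = \limsup_{t\to\infty}\frac1t\log\|B_i^{(t)}(x)\|$. The first step is to record the key cocycle/subadditivity inequality: for all $s,t\ge 0$, $\log\|B_i^{(s+t)}(x)\| \le \log\|B_i^{(s)}(x)\| + \log\|B_i^{(t)}(\Phi_A^s(x))\|$, which uses submultiplicativity of the operator norm. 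By assumptions \textbf{H2} and \textbf{H3} the matrices $A_i$ stay in a compact set of primitive matrices over the relevant region, so the entrywise positivity after $n_i$ steps gives two-sided bounds; in particular the single-step function $g_i(y) := \log\|A_i(y)\|$ is continuous and bounded on $G_A$ (and on a neighborhood), and more to the point the ``block'' function $G_i^{(k)}(y) := \frac1k\log\|B_i^{(k)}(y)\|$ is continuous and bounded for each fixed $k$.

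The second and central step is the standard approximation of $\rr_i(\mu)$ from above by a block average. Since $\mu$ is $\Phi_A$-invariant, Kingman's subadditive ergodic theorem (applied to $\mu$, not assumed ergodic) gives $\rr_i(\mu) = \inf_{k\ge 1}\frac1k\int \log\|B_i^{(k)}(y)\|\,\mu(dy) = \lim_{k\to\infty}\frac1k\int\log\|B_i^{(k)}\|\,d\mu$. Hence for any $\eps>0$ fix $k$ with $\frac1k\int\log\|B_i^{(k)}\|\,d\mu \le \rr_i(\mu)+\eps$. Now use that $y\mapsto \frac1k\log\|B_i^{(k)}(y)\|$ is continuous and bounded on the compact set containing $\support(\mu)$ (here \textbf{H2} guarantees the product of $k$ primitive matrices with the fixed sign pattern has norm bounded away from $0$, so the log is finite and continuous everywhere relevant): if $\mu = \lim_j \Lambda_{t_j}(x)$ weak$^*$, then
\[
\frac1k\int \log\|B_i^{(k)}(y)\|\,\mu(dy) \;=\; \lim_{j\to\infty}\frac1{k\,t_j}\sum_{s=0}^{t_j-1}\log\|B_i^{(k)}(\Phi_A^s(x))\|.
\]
The third step is to compare this block-averaged Birkhoff sum along the orbit of $x$ with $\frac1{t_j}\log\|B_i^{(t_j)}(x)\|$. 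Writing $t_j = qk + r$ with $0\le r<k$ and telescoping via the cocycle inequality, one gets $\log\|B_i^{(t_j)}(x)\| \le \sum_{p=0}^{q-1}\log\|B_i^{(k)}(\Phi_A^{pk}(x))\| + \log\|B_i^{(r)}(\Phi_A^{qk}(x))\|$; averaging over the $k$ possible starting offsets $0,1,\dots,k-1$ shows that $\frac1{t_j}\log\|B_i^{(t_j)}(x)\|$ is, up to an $O(1/t_j)$ boundary error, at most the average over $s=0,\dots,t_j-1$ of $\frac1k\log\|B_i^{(k)}(\Phi_A^s(x))\|$. Taking $j\to\infty$ along the chosen subsequence and using the displayed identity yields $\rr_i(x) \ge \limsup_j \frac1{t_j}\log\|B_i^{(t_j)}(x)\| \ge$ wait — one must be careful about the direction: actually the telescoping gives an \emph{upper} bound on $\frac1{t_j}\log\|B_i^{(t_j)}(x)\|$ by the block average, and since $\rr_i(x)$ is the $\limsup$ over \emph{all} $t$ (not just $t_j$), we have $\rr_i(x)\ge \limsup_{j}\frac1{t_j}\log\|B_i^{(t_j)}(x)\|$ trivially, but that is the wrong inequality. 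The correct route is the reverse: we need a \emph{lower} bound on $\frac1{t_j}\log\|B_i^{(t_j)}(x)\|$ by the block average. For that, use superadditivity in the other direction — i.e. since the matrices are nonnegative, $\|B_i^{(s+t)}\| \ge c\,\|B_i^{(s)}\|\cdot\|B_i^{(t)}(\Phi_A^s(x))\|$ is \emph{false} in general, so instead one passes to the primitive-matrix bound: by \textbf{H2} there is $c>0$ with $\|B_i^{(s+k)}(x)\| \ge c\|B_i^{(s)}(x)\|\,\|B_i^{(k)}(\Phi_A^s(x))\|$ once $k\ge n_i$ (a primitive matrix is strictly positive, and strictly positive matrices sandwich the norm submultiplicatively from below with a uniform constant on compacts). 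Telescoping this lower bound over blocks gives $\frac1{t_j}\log\|B_i^{(t_j)}(x)\| \ge \frac1{t_j}\sum_{p}\log\|B_i^{(k)}(\Phi_A^{pk}(x))\| + O(\tfrac{\log t_j}{t_j})$, and averaging over offsets and passing to the limit along $t_j$ gives $\limsup_j \frac1{t_j}\log\|B_i^{(t_j)}(x)\| \ge \frac1k\int\log\|B_i^{(k)}\|\,d\mu \ge \rr_i(\mu)+\text{(error from offset averaging)}$; since $\rr_i(x)\ge\limsup_j$, and $\eps$ and the offset errors are arbitrarily small as $k\to\infty$, we conclude $\rr_i(x)\ge \rr_i(\mu)$.

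The main obstacle, and the place demanding the most care, is precisely this lower-bound telescoping: plain submultiplicativity only gives the ``wrong'' inequality, so one genuinely needs assumption \textbf{H2} (fixed primitive sign structure) to obtain a uniform constant $c>0$ such that products of nonnegative matrices lose at most a bounded factor when split — i.e. a reverse submultiplicativity valid after $n_i$ steps, uniformly over the compact range of states. Making that lemma precise (strictly positive matrices drawn from a compact set admit $\|M_1 M_2\| \ge c\|M_1\|\|M_2\|$) and controlling the boundary/offset terms so they vanish after dividing by $t_j$ is the crux; everything else is Kingman's theorem plus weak$^*$ convergence applied to a fixed continuous bounded test function. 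One should also note that $\mu$ need not be ergodic, so $\rr_i(\mu)$ must be understood as the integral $\int\rr_i(y)\,\mu(dy)$ and the $\inf_k$-formula for it still holds by the general (non-ergodic) subadditive ergodic theorem.
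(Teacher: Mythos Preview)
Your approach is correct in outline but genuinely different from the paper's. The paper does \emph{not} work with the subadditive cocycle $\log\|B_i^{(t)}\|$ directly. Instead it first lifts the (non-invertible) map $\Phi_A$ to a shift homeomorphism $\Theta$ on a trajectory space $\Gamma$, then invokes a Ruelle-type result for primitive matrix cocycles (Proposition~\ref{prop:ruelle}) to obtain a \emph{continuous} dominant direction $u_i(\gamma)$. This converts the problem to an \emph{additive} cocycle: setting $\bzeta_i(\gamma)=\log\|u_i(\gamma)A_i(\gamma)\|$, Propositions~\ref{prop_HS} and \ref{prop_intlambda} give $\rr_i(\gamma)=\limsup_t\frac{1}{t}\sum_{s<t}\bzeta_i(\Theta^s\gamma)$ and $\rr_i(\tilde\mu)=\int\bzeta_i\,d\tilde\mu$, after which the lemma is a one-line weak$^*$ computation. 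By contrast, you stay on the base, keep the cocycle subadditive, and recover the needed lower bound via an almost-additivity estimate $\|B_i^{(s+t)}\|\ge c\,\|B_i^{(s)}\|\,\|B_i^{(t)}(\Phi_A^s\cdot)\|$ coming from \textbf{H2}. Both routes use primitivity in an essential way; yours avoids the trajectory-space machinery at the cost of the block/offset argument. Two points to tighten: (i) your error term is not $O(\log t_j/t_j)$ but a constant $\frac{\log c}{k}$ (one factor of $c$ per block split, and there are $\approx t_j/k$ splits), which indeed vanishes only as $k\to\infty$ --- your closing sentence gets this right even though the displayed estimate does not; (ii) the products $B_i^{(s)}$ do \emph{not} lie in a fixed compact set of strictly positive matrices, so the reverse-submultiplicativity constant cannot be read off that way --- what is actually uniform is the ratio of column sums of $B_i^{(s)}$ for $s\ge n_i$, because $B_i^{(s)}=B_i^{(s-n_i)}\cdot B_i^{(n_i)}(\Phi_A^{s-n_i}\cdot)$ and the rightmost factor has entries in a fixed interval $[\alpha_0,\beta_0]$ on the compact attracting set; this yields $c=\alpha_0/(\beta_0 n_i)$ independent of $s,t$.
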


We call an invariant measure $\mu$ for model \eqref{DYN1} \emph{saturated} if $\rr_i(\mu)\le0$ for all $i$.

\begin{proposition}\label{proposition:robust-persistence}
Let $(\delta_n)_{n\ge 1}$ be a non-negative sequence that converges to zero, and $(A^n)_{n\ge0}$ be a sequence of $\delta_n$-perturbations of model (\ref{DYN1}). Let $\{\mu_n\}_{n\ge 1}$ be saturated $\Phi_{A^n}$-invariant measures, then the weak* limit points of $\{\mu_n\}_{n\ge1}$ is a non-empty set consisting of saturated $\Phi_A$-invariant measures.
\end{proposition}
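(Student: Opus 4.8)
The plan is to establish three things: (a) the set of weak$^*$ limit points is non-empty; (b) any such limit point $\mu$ is $\Phi_A$-invariant; and (c) any such $\mu$ is saturated, i.e. $\rr_i(\mu)\le 0$ for all $i$. For (a), I would first argue that all the $\mu_n$ are eventually supported in a common compact set. Since $\delta_n\to 0$, for $n$ large we have $S_{A^n}\subset N_{\delta_n}(S_A)\subset N_1(S_A)$, and $N_1(S_A)$ has compact closure; as each $\mu_n$ is $\Phi_{A^n}$-invariant it is supported on the maximal invariant set inside $S_{A^n}$, hence inside the fixed compact set $K:=\overline{N_1(S_A)}$. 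By Prokhorov's theorem $\{\mu_n\}$ is then relatively compact in the weak$^*$ topology, so limit points exist.

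For (b), let $\mu=\lim_k \mu_{n_k}$ along a subsequence. The standard argument: for any bounded continuous $f$ on $K$, $\int f\circ\Phi_A\,d\mu_{n_k}\to\int f\circ\Phi_A\,d\mu$ provided $f\circ\Phi_A$ is continuous and bounded (it is, by \textbf{H1}), and $\int f\circ\Phi_{A^{n_k}}\,d\mu_{n_k}=\int f\,d\mu_{n_k}\to\int f\,d\mu$ by invariance. The only subtlety is replacing $\Phi_{A^{n_k}}$ by $\Phi_A$ inside the first integral: since $\|A(x)-A^{\delta_n}(x)\|\le\delta_n$ on $N_1(S_A)$ and $f$ is uniformly continuous on $K$, $\sup_{x\in K}|f(\Phi_{A^{n_k}}(x))-f(\Phi_A(x))|\to 0$, so the error vanishes. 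Hence $\int f\circ\Phi_A\,d\mu=\int f\,d\mu$ for all such $f$, which gives $\Phi_A$-invariance of $\mu$. One should also check $\support(\mu)\subset G_A$: since $\mu$ is $\Phi_A$-invariant and supported in $S_A$, it is supported on the maximal invariant subset of $S_A$, which lies in $G_A$.

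For (c), the goal is to pass the inequality $\rr_i(\mu_n)\le 0$ to the limit. This is the main obstacle, because $\rr_i(\cdot)$ is defined via a $\limsup$ (or subadditive limit) of $\frac1t\log\|A_i^n(X_0)\cdots A_i^n(X_{t-1})\|$ and is not obviously continuous under the joint perturbation of both the matrix field $A_i\to A_i^n$ and the measure $\mu_n\to\mu$. The clean route is to use the variational/continuity results that should already be available for structured models (the analogue of \citep{ruelle-79b}, invoked in Proposition~\ref{prop:equality-lyap} and the surrounding discussion): under \textbf{H2}, primitivity gives $\rr_i(\nu)=\int \log\big(\|A_i(x)u_i(x)\|/\|u_i(x)\|\big)\,\nu(dx)$ for a continuous Oseledets-type direction field $u_i$, so $\nu\mapsto\rr_i(\nu)$ is an integral of a fixed continuous function and is weak$^*$ continuous on invariant measures. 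For the perturbed systems one gets a family of such integrands depending continuously on the perturbation parameter, uniformly on $K$; combined with $\mu_n\to\mu$ this yields $\rr_i(\mu)=\lim_k \rr_i^{(n_k)}(\mu_{n_k})\le 0$. If one prefers to avoid the direction-field representation, the alternative is the upper-semicontinuity of the top Lyapunov exponent: $\limsup_k \rr_i^{(n_k)}(\mu_{n_k})\le \rr_i(\mu)$ follows from applying the subadditive ergodic theorem at a fixed finite time $t$ — $\rr_i(\nu)\le\frac1t\int\log\|A_i(X_0)\cdots A_i(X_{t-1})\|\,\nu(dx)$ — where the right side is continuous in $(\text{matrix field},\nu)$, then letting $n_k\to\infty$ and afterwards $t\to\infty$. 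Either way one concludes $\rr_i(\mu)\le 0$ for every $i$, so $\mu$ is saturated, and together with non-emptiness from (a) this proves the proposition. $\blacksquare$
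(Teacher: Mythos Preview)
Your parts (a) and (b) are fine; in fact your invariance argument in (b) is more direct than the paper's, which instead lifts everything to the trajectory space $\Gamma$ and argues there (since it needs the lifted measures anyway for the saturation step).

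Part (c), however, has a genuine gap in both routes you propose. First, the upper-semicontinuity argument yields the \emph{wrong} inequality for the conclusion you want. From $\rr_i(\nu)\le \frac{1}{t}\int \log\|A_i(X_0)\cdots A_i(X_{t-1})\|\,\nu(dx)$ and joint continuity of the right-hand side you correctly get $\limsup_k \rr_i^{(n_k)}(\mu_{n_k})\le \rr_i(\mu)$. But you know only that the left-hand side is $\le 0$; this does not bound $\rr_i(\mu)$ from above. What you need is the reverse inequality $\rr_i(\mu)\le \liminf_k \rr_i^{(n_k)}(\mu_{n_k})$, i.e.\ lower semicontinuity, and the subadditive finite-time bound does not deliver that.

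Second, your first route (write $\rr_i(\nu)=\int \log\|A_i(x)u_i(x)\|\,\nu(dx)$ for a continuous Oseledets-type direction $u_i(x)$ on the base and use weak$^*$ continuity) presumes exactly what the paper has to work to establish. Ruelle's Proposition (here Proposition~\ref{prop:ruelle}) requires the underlying map to be a \emph{homeomorphism}; since $\Phi_A$ is in general non-invertible, there is no continuous dominant direction $u_i$ defined on $\R^n_+$. This is precisely why the paper constructs the trajectory space $\Gamma$ (Section~\ref{partB}) and the shift homeomorphism $\Theta$: on $\Gamma$ one does get a continuous $u_i$ and hence a continuous integrand $\bzeta_i(\gamma)=\log\|u_i(\gamma)A_i(\gamma)\|$, so that $\rr_i(\tilde\mu)=\int_\Gamma \bzeta_i\,d\tilde\mu$ is weak$^*$ continuous on $\Inv(\Theta)(\Gamma_+)$. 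The paper then lifts each $\mu_n$ to $\tilde\mu_n\in\Inv(\Theta)(\Gamma_+^{n})$ via Lemma~\ref{lemma:inv-measure}, passes to a weak$^*$ limit $\tilde\mu$ on $\Gamma$, shows $\support(\tilde\mu)\subset\Gamma_+^A$ by a Portmanteau argument, and finally uses continuity of $\bzeta_i$ to conclude $\rr_i(\mu)=\int\bzeta_i\,d\tilde\mu=\lim_k\int\bzeta_i\,d\tilde\mu_{n_k}=\lim_k\rr_i(\mu_{n_k})\le 0$. Your first route is morally this argument, but it cannot be carried out on the base space; the lift is not optional.
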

The proofs of Proposition~\ref{proposition-convergence-empiricalmeasures}, Lemma~\ref{lemma-inequality}, and Proposition~\ref{proposition:robust-persistence} are postponed to the end of the section. We now prove Theorem~\ref{thm:main}.

\begin{proof}
First we show that $(i)\Leftrightarrow (ii)\Leftrightarrow (iii)$. For $(i)\Leftrightarrow (ii)$ see \cite{hofbauer-schreiber-10}. $(ii)$ is obtained by integrating the inequality in $(iii)$. Finally, we prove that $(ii)\Rightarrow (iii)$. Thus, let $x\in S_0$ and $\mu=\lim_{k\to \infty}\Lambda_{t_k}(x)$ be a weak$^*$ limit point of the sequence $\{\Lambda_{t}(x)\}_{t\ge1} \in \Inv(\Phi_A)(S_0)$. By Lemma~\ref{lemma-inequality}, $\rr_i(x) \ge \rr_i(\mu)$ for all $i=1,\dots,m$. Writing $\mu$ as a convex combination of ergodic probability measures, condition $(ii)$ implies condition $(iii)$.

Now we show $(i)$ implies that $\red{M}$ is unsaturated, arguing by contradiction. Suppose that $(i)$ holds and that $\red{M}$ is saturated. Theorem~2.1 in \cite{hofbauer-so-89} implies that either $W^s(\red{M})$, the stable \red{set} of $\red{M}$, contains points in $\red{G_A}\setminus S_0$, or $\red{M}$ is not isolated in $\mathbb{R}^n_+$. We \red{will show} that in either case there exists a saturated $\Phi_A$ invariant measure with support in $\red{M}$ which contradicts $(i)$.

Consider the first case \red{where} there exists an $x\in W^s(\red{M})\setminus S_0$. Let $\mu$ be a weak$^*$ limit point of $\{\Lambda_t(x)\}_{t\ge1}$. Proposition~\ref{proposition-convergence-empiricalmeasures} implies that $\mu$ is a saturated $\Phi_A$ invariant measure. On the other hand, since  $x\in W^s(\red{M})$, $\support(\mu) \subset \red{M}$.

Now consider the second case \red{where $M$} in not isolated in $\mathbb{R}^n_+$. Then there exists a sequence $\{\omega_n\}_{n\in \mathbb{N}}$ of omega limit sets of points in $\red{G_A}\setminus S_0$ that accumulate on $\red{M}$. Let $\{\mu_n\}_{n\in \mathbb{N}}$ be a corresponding sequence of ergodic probability measures supported by these omega limit sets. Then, Lemma~\ref{lemma:inv-measure}, Propositions~\ref{prop_lambdanul} and Proposition~\ref{prop:equality-lyap} imply that $r_i(\mu_n)= 0$ for all $i=1,...,m$ and all $n\in \mathbb{N}$. Hence the $\mu_n$'s are saturated measures. Proposition~\ref{proposition:robust-persistence}, applied with $\delta_n=0$ for all $n$, \red{implies} that $\{\mu_n\}_{n\in \mathbb{N}}$ has a weak$^*$ limit point $\tilde{\mu}$ that is a saturated $\Phi_A$ invariant measure with support \red{in $M$}. This concludes the proof of the claim.

Finally, we show that $\red{M}$ is robustly unsaturated. If $\red{M}$ is not robustly unsaturated, there exists a sequence $\{\delta_n\}_{n\in \mathbb{N}}\subset \mathbb{R}_+$ and a corresponding sequence $\{\mu_n\}_{n\in \mathbb{N}}$ of saturated measures for the $\delta_n$-perturbations of model \eqref{DYN1} with support in the continuation of $\red{M}$. So again, from Proposition~\ref{proposition:robust-persistence}$, \{\mu_n\}_{n\in \mathbb{N}}$ has a weak$^*$ limit point $\tilde{\mu}$ that is a saturated $\Phi_A$ invariant measure. Moreover, since the continuation of $\red{M}$ converges to $\red{M}$ as $\delta_n \rightarrow 0$, $\tilde{\mu}$ is supported by $\red{M}$, which contradicts $(i)$.

\end{proof}

The rest of this section is dedicated to the proofs of Propositions~\ref{proposition-convergence-empiricalmeasures} and \ref{proposition:robust-persistence}, and Lemma~\ref{lemma-inequality}. \red{To this end,} define the space
\[
\mathcal{D}:=\{B: \R^n_+ \rightarrow \mathbf{M}_{n}(\R) : B\text{ induces  a } \delta\text{-perturbation of model} (\ref{DYN1})\}
\]
endowed with the pseudo-metric induced by the norm sup on the compact $N_1(S_A)$. Define the map $\delta : \mathcal{D} \rightarrow \R_+$ by $\delta(d):=\inf\{\delta \in \R_+ : d \text{ is a } \delta \text{-perturbation}\}$ for $d\in \mathcal{D}$.

In order to prove the robustness result, we need to link the long-term behavior of model (\ref{DYN1}) with the long-term behavior of $\delta$-perturbations with small $\delta$. To that end, we regroup those dynamics under one dynamics over a larger space. Let $(A^{n})_{n\ge 0} \subset \mathcal{D}$ be a sequence of $\delta(A^n)$-perturbations such that $\delta(A^n) \rightarrow 0$ as $n\rightarrow \infty$. Define the set $C:= \overline{\{A^n : \ n \ge 0\}} \subset \mathcal{D}$. The set $C$ is compact. Indeed, since $\delta(A^n) \downarrow 0$, properties (i) and (ii) of a $\delta$-perturbation imply that the sequence $\{A^n\}_n$ converge uniformly to $A$ in $N_1(S_A)$. Define
\begin{equation}\label{def:phi}
\begin{array}{ccl}
 \Phi : &\R^n_+ \times \mathcal{D}   &\rightarrow  \R^n_+ \times \mathcal{D}  \\
  & x, c  & \mapsto  xc(x),c
\end{array}
\end{equation}
and the projection map $p: \R^n_+ \times \mathcal{D}  \rightarrow  \R^n_+$ as $p(x,c)=x$. Write $\Phi_c$ for $p\circ \Phi(\cdot,c)$. Note that $\Phi_A$ is consistent with (\ref{def:phiA}). Let $\Phi^t_c$ denote the composition of $\Phi_c$ with itself $t$ times, for $t \in \N$ and $c\in \mathcal{D}$.  Hence model (\ref{DYN1}) can be rewritten as
\[
X_{t+1}(x) = \Phi_A^{t+1}(x).
\]

Assumption \textbf{H3} can be rewritten in term of attractor of the dynamics induced by $\Phi_A$.

\begin{definition}\label{def:attractor}
A compact set $K\subset \R^n_+$ is a \emph{global attractor for $\Phi_A$} if there exists a neighborhood $V$ of $K$ such that
\begin{enumerate}
\item[(i)] for all $x \in  \R^n_+ $, there exist $T \in \N$ such that $\Phi_A^t(x) \in V$ for all $t \ge T$;
\item[(ii)] $\Phi_A(V) \subset V$ and $K=\bigcap_{t\in \N}\Phi_A^t(V)$.
\end{enumerate}

\end{definition}

Assumption \textbf{H3} takes on the form
\begin{enumerate}
\item[\textbf{H3':}] There exists a global attractor $S_A \subset \R^n_+$ for $\Phi_A$.
\end{enumerate}


 \subsection{Trajectory space\label{partB}}

The key element of the proof of Propositions~\ref{proposition-convergence-empiricalmeasures} and \ref{proposition:robust-persistence} is Proposition \ref{prop:ruelle} due to \cite{ruelle-79b} in which it is crucial that the map $\Phi$ is an homeomorphism. For now, it is not the case. Indeed, the maps $\Phi_c$ are, a priori, not invertible. To avoid this constraint we extend the dynamics induced by $\Phi$ to an invertible dynamics on the larger set of possible trajectories.

Assumption \textbf{H3'} and the fact that $\delta(A^n) \downarrow 0$ imply that, for each $c\in C$, there exist a non empty closed set $V_c$ such that (i) $V_c \subset N_1(S_A)$ and (ii) $\Phi_c(V_c) \subset  V_c$. Without loss of generality, we assume that $V_c$ is the larger open set such that (i) and (ii) \red{hold}.

Fix $c\in C$. Property (ii) of $V_c$ implies that, for every point $x \in V_c$, there exists a sequence $\{x_t\}_{t\in \N} \subset V_c$ such that $x_0=x$, and $x_{t+1}=\Phi_c(x_t)$ for all $t \ge 0$. Such a sequence is called a \emph{$\Phi_c$-positive trajectory}. In order to create a \emph{past} for all those $\Phi_c$-positive trajectories, let us pick a point $x^*\in \interior \R^n_+ \backslash N_1(S_A)$, and consider the sequence space $\mathcal{T}:=(N_1(S_A) \cup \{x^*\})^{\Z}$ endowed with the product topology, and the homeomorphism $\vphi : \mathcal{T} \rightarrow \mathcal{T}$ called \emph{shift operator}, and defined by $\vphi(\{x_t\}_{t\in \Z}) = \{x_{t+1}\}_{t\in \Z}$. Since $N_1(S_A) \cup \{x^*\}$ is compact, the space $\mathcal{T}$ is compact as well.

Every $\Phi_c$-positive trajectory can be seen as an element of $\mathcal{T}$ by creating a fixed past (i.e. $x_t=x^*$ for all $t<0$). Define $G_c\subset \mathcal{T}$ as the set of such $\Phi_c$-positive trajectories, and  $E_c =\overline{ \bigcup_{t\in \Z}\vphi^t(G_c)}\subset \mathcal{T}$. In words, $E_c$ is the adherence in $\mathcal{T}$ of the set of all shifted (by $\vphi^t$ for some $t\in \Z$) $\Phi_c$-positive trajectories. Since $E_c$ is a closed subset of the compact $\mathcal{T}$, it is compact as well. Define
 \[
 \Gamma := \bigcup_{c\in C}\left(E_c \times \{c\}\right),
 \]
subset of the product space $\mathcal{T}\times C$. From now on, when we write $(\bbx,c) \in \Gamma$, we mean $\bbx=\{x_t\}_{t\in \Z}\in E_c$ and $c\in C$.

\begin{lemma}\label{lemma:gammaclosed}
 $\Gamma$ is a compact subset of $\mathcal{T}\times C$.
\end{lemma}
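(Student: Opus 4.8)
\textbf{Proof plan for Lemma~\ref{lemma:gammaclosed}.}

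The plan is to show that $\Gamma$ is a closed subset of the compact space $\mathcal{T}\times C$; since a closed subset of a compact space is compact, this suffices. (Recall $\mathcal{T}$ is compact because it is a countable product of the compact set $N_1(S_A)\cup\{x^*\}$, and $C$ is compact by the discussion preceding \eqref{def:phi}.) So I would take a sequence $(\bbx^{(k)},c^{(k)})\in\Gamma$ converging to some $(\bbx,c)\in\mathcal{T}\times C$ and prove $(\bbx,c)\in\Gamma$, i.e.\ that $\bbx\in E_c$. Here $\bbx^{(k)}=\{x^{(k)}_t\}_{t\in\Z}\in E_{c^{(k)}}$, convergence in $\mathcal{T}$ is coordinatewise, and $c^{(k)}\to c$ in the sup-pseudmetric on $\mathcal{D}$.

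The main structural step is to understand $E_c$ concretely. A point $\bbx=\{x_t\}_{t\in\Z}\in\mathcal{T}$ lies in $E_c$ iff it is a limit (in $\mathcal{T}$) of shifts $\vphi^{t_j}(\bz^{(j)})$ of $\Phi_c$-positive trajectories $\bz^{(j)}\in G_c$. Unwinding this, one checks that $\bbx\in E_c$ iff $\bbx$ has the following form: there is an index $n_0\in\Z\cup\{-\infty\}$ such that $x_t=x^*$ for all $t<n_0$, $x_{n_0}\in V_c$ (when $n_0>-\infty$), and $x_{t+1}=\Phi_c(x_t)$ for all $t\ge n_0$ (equivalently, for all $t$ with $x_t\ne x^*$, one has $x_{t+1}=\Phi_c(x_t)$, and the set of $t$ with $x_t=x^*$ is a lower-closed subset of $\Z$; also $x_t\in V_c$ whenever $x_t\neq x^*$). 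I would establish this characterization as the key claim: the inclusion ``$\Leftarrow$'' is direct (such a $\bbx$ is either a genuine $\Phi_c$-positive trajectory with fixed past, or a shift of one, or a limit of such when $n_0=-\infty$, using that $V_c$ is forward-invariant and, being open with $\Phi_c$ continuous, that coordinatewise limits of $\Phi_c$-trajectories satisfying the recursion still satisfy it); the inclusion ``$\Rightarrow$'' follows since the stated conditions are closed under coordinatewise limits and shifts and hold for every element of $G_c$.

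Granting the characterization, closedness of $\Gamma$ is a passage to the limit. Fix $t\in\Z$. If $x_t\ne x^*$, then $x^{(k)}_t\ne x^*$ for large $k$ (since $x^*$ is isolated from $N_1(S_A)$: recall $x^*\in\interior\R^n_+\setminus N_1(S_A)$), hence $x^{(k)}_{t+1}=\Phi_{c^{(k)}}(x^{(k)}_t)$; letting $k\to\infty$ and using joint continuity of $(x,c)\mapsto xc(x)$ on $N_1(S_A)\times C$ (continuity in $x$ is \textbf{H1}, uniform-in-$x$ continuity in $c$ is the sup-metric on $\mathcal{D}$) gives $x_{t+1}=\Phi_c(x_t)$; moreover $x_t=\lim_k x^{(k)}_t\in\overline{V_{c^{(k)}}}$, and one checks $x_t\in V_c$ using that $V_c$ is the maximal open set with $V_c\subset N_1(S_A)$, $\Phi_c(V_c)\subset V_c$, together with the convergence $c^{(k)}\to c$ — more simply, since $x_t\ne x^*$ and $x_t$ lies in the forward orbit under $\Phi_c$ of some point, its backward part is eventually $x^*$ or it is a two-sided $\Phi_c$-orbit inside $N_1(S_A)$, which forces $x_t\in V_c$ by maximality of $V_c$. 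Finally, the set $\{t: x_t=x^*\}$ is lower-closed because each $\{t: x^{(k)}_t=x^*\}$ is and these conditions pass to the coordinatewise limit (if $x_s\ne x^*$ for some $s$ then $x^{(k)}_s\ne x^*$ for large $k$, hence $x^{(k)}_t\ne x^*$ for all $t\ge s$, hence $x_t\ne x^*$ for all $t\ge s$). Thus $\bbx$ satisfies the characterization with respect to $c$, so $\bbx\in E_c$ and $(\bbx,c)\in\Gamma$.

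The step I expect to be the main obstacle is the clean bookkeeping around the artificial past $x^*$ and, relatedly, pinning down exactly that the limiting trajectory's nontrivial coordinates lie in $V_c$ (not merely in $N_1(S_A)$): the definition of $V_c$ as the \emph{largest} open forward-invariant subset of $N_1(S_A)$ is what makes this work, and I would want to state it carefully and verify that the continuation $c^{(k)}\to c$ does not shrink $V_c$ in a way that breaks the argument. Everything else is a routine diagonal/coordinatewise-limit argument combined with the joint continuity of the evaluation map $\Phi$ on $\mathcal{T}\times C$.
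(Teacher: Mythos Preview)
Your plan is correct and follows the same route as the paper: show $\Gamma$ is closed in the compact product $\mathcal{T}\times C$ by taking a convergent sequence and verifying the limit satisfies the trajectory conditions, with the maximality of $V_c$ doing the real work. Two points where the paper streamlines what you sketch. First, it immediately reduces to $c=A$: since $C=\{A^n:n\ge0\}\cup\{A\}$ and $A$ is its only accumulation point, a convergent sequence $c^{(k)}\to c$ is either eventually constant (trivial, each $E_{c}$ being closed by definition) or has $c=A$; this spares you from reasoning about $V_c$ for arbitrary $c$. Second, for the step you correctly flag as the obstacle --- showing the nontrivial limit coordinates lie in $V_A$ rather than merely in $N_1(S_A)$ --- the paper introduces the auxiliary set $W:=\{x\in N_1(S_A):\exists\,n_k\uparrow\infty,\,x_k\in V_{c_{n_k}},\,x=\lim_k x_k\}$, proves $\Phi_A(W)\subset W$ via equicontinuity of $C$, and invokes maximality of $V_A$ to conclude $W\subset V_A$. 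Your case-by-case reasoning on the shape of the limiting trajectory is correct in spirit, but packaging it through $W$ is cleaner and is exactly the device that resolves your stated concern.
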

\begin{proof}
Since $\mathcal{T}\times C$ is compact, we need only to show that $\red{\Gamma}$ is closed. Let $(\bbx,c) \in \mathcal{T} \times C$ and $\{(\bbx_n,c_n)\}_{n\ge 0} \subset \Gamma$ be a sequence converging to $(\bbx,c)$. By definition of $C$, we need only to consider the case $c=A$ and show that $\bbx \in E_A$. Define the closed set $W:=\{x \in N_1(S_A) : \exists (n_k)_k \uparrow \infty, (x_k)_k \in \mathcal{T} \text{ s.t. }  x_k \in V_{c_{n_k}} \forall k \text{ and } x= \lim_{k \to \infty}x_{k} \}$. We claim that
\[
\Phi_A(W) \subset W.
\]
Let $x\in W$ and $(x_k)_k \in \mathcal{T}$ such that $x_k \in V_{c_{n_k}}$ for all $ k$ and $ x= \lim_{k \to \infty}x_{k}$. Fix $\eps>0$. For $k$ large enough $\vert \Phi_A(x)-\Phi_{c_{n_k}}(x)\vert \le \eps$ and by equicontinuity of $C$, $\vert \Phi_{c_{n_k}}(x)-\Phi_{c_{n_k}}(x_k)\vert \le \eps$. Then
\begin{eqnarray*}
\vert \Phi_A(x)-\Phi_{c_{n_k}}(x_k)\vert &\le& \vert \Phi_c(x)-\Phi_{c_{n_k}}(x)\vert + \vert \Phi_{c_{n_k}}(x)-\Phi_{c_{n_k}}(x_k)\vert \\
&\le& 2\eps.
\end{eqnarray*}
Hence $\Phi_{c_{n_k}}(x_k) \rightarrow \Phi_A(x)$ as $k \rightarrow \infty$. Since $\Phi_{c_{n_k}}(x_k) \in V_{c_{n_k}} \subset N_1(S_A)$ for all $k$, $\Phi_c(x)\in W$ which proves the claim.

The maximality of $V_A$ and the claim imply that $W \subset V_A$. If $\bbx_n =(x_k^n)_{k\ge0}$, and $\bbx = (x_k)_{k\ge 0}$, then for all $k\ge 0$, $x_k^n \rightarrow x_k$ as $n \rightarrow \infty$. Then for each $k\ge0$, either $x_k = x^*$, or $x_k \in W \subset V_A$. If $x_k \in V_A$, equicontinuity of $C$ implies that $\Phi_A(x_k)=x_{k+1}$. Hence $\bbx \in E_A$. This concludes the proof.
\end{proof}
Define the homeomorphism
\begin{equation*}
\begin{array}{ccl}
 \Theta: &\mathcal{T} \times C   &\rightarrow \mathcal{T} \times C    \\
  & \bbx, c  & \mapsto  \vphi(\bbx),c
\end{array}
\end{equation*}

By definition of the sets $E_c$, $E_c \times \{c\}$ are invariant under $\Theta$. In particular, $\Gamma$ is invariant under $\Theta$, which implies that the restriction $\restr{\Theta}{\Gamma}$ of $\Theta$ \red{to} $\Gamma$ is well-defined. To simplify the presentation we still denote this restriction by $\Theta$. The projection map $\pi_0 : \Gamma \rightarrow N_1(S_A) \cup \{x^*\} \times C$ is defined by $\pi_0(\bbx,c)=(x_0,c)$ for all $(\bbx,c) \in \Gamma$. The map $\pi_0$ is continuous and $\pi_0(\Gamma)= \bigcup_{c\in C}\left(V_c \cup \{x^*\} \times \{c\}\right)$.

Next, for all $c\in C$, we define the compact set of all \emph{$\Phi_c$-total trajectories} as
\[
\Gamma_+^c:= \pi_0^{-1}(V_c \times \{c\}),
\]
and the compact set of \emph{$\Phi_c$-total trajectories on the extinction set $\bS_0$} as
\[
\Gamma_0^c:= \pi_0^{-1}(\bS_0 \times \{c\}).
\]
The respective union over $C$ of those sets of trajectories are $\Gamma_+ := \bigcup_{c\in C}\Gamma_+^c$ and $\Gamma_0 := \bigcup_{c\in C}\Gamma_0^c$.

For all $c\in C$, the dynamic induced by $\Phi_c$ on $V_c \times \{c\}$ is linked to the dynamic induced by $\Theta$ on $\Gamma_+^c$ by the following semi conjugacy
 \begin{equation}\label{Eq_conj}
p \circ \pi_0 \circ \Theta = \Phi_c \circ p \circ \pi_0.
\end{equation}

As a consequence of the semi-conjugacy (\ref{Eq_conj}), we show that the statistical behavior of $\restr{\Theta}{\Gamma_+^c}$ and $\Phi_c$ are linked, and the set of $\Phi_c$-invariant measures is the projection through $p\circ \pi_0$ of the set of $\restr{\Theta}{\Gamma_+^c}$-invariant measures.

Given a trajectory $\gamma \in \Gamma_+$, the \emph{empirical occupation measure} at time $t\in \R_+$ of $\{\Theta^s(\gamma)\}_{s\ge 0}$ is
\[
\tilde{\Lambda}_t(\gamma) := \frac{1}{t} \sum_{s=0}^{t-1}\delta_{\Theta^s(\gamma)},
\]
and given $(x,c) \in V_c \times C$, the \emph{empirical occupation measure} at time $t\in \R_+$ of $\{\Phi^s_c(x)\}_{s\ge 0}$ is
\[
\Lambda_t(x,c) := \frac{1}{t} \sum_{s=0}^{t-1}\delta_{\Phi^s_c(x)}.
\]

\begin{lemma}\label{lemma_mes_emp}
Let $(\bbx,c) \in \Gamma_+$. Then for all $t\ge 0$ we have
\[
(p \circ \pi_0)^*(\tilde{\Lambda}_t(\bbx,c))=\Lambda_t(x_0,c).
\]
\end{lemma}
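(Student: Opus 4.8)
The plan is to reduce the statement to a single pointwise orbit identity and then transport it through the Dirac masses by linearity of the pushforward.

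First I would record two elementary facts about pushforwards of measures: for any Borel map $g$ between metric spaces, $g^{*}(\delta_y)=\delta_{g(y)}$, and $g^{*}$ is affine, hence commutes with finite convex combinations. Applying this with $g=p\circ\pi_0$ to the definition $\tilde{\Lambda}_t(\bbx,c)=\frac{1}{t}\sum_{s=0}^{t-1}\delta_{\Theta^s(\bbx,c)}$ gives immediately
\[
(p\circ\pi_0)^{*}\big(\tilde{\Lambda}_t(\bbx,c)\big)=\frac{1}{t}\sum_{s=0}^{t-1}\delta_{(p\circ\pi_0)(\Theta^s(\bbx,c))}.
\]
Since $\Lambda_t(x_0,c)=\frac{1}{t}\sum_{s=0}^{t-1}\delta_{\Phi_c^s(x_0)}$, the lemma reduces to the orbit identity $(p\circ\pi_0)(\Theta^s(\bbx,c))=\Phi_c^s(x_0)$ for $0\le s\le t-1$ (indeed for all $s\ge 0$).

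I would prove that identity by induction on $s$. For $s=0$ it is just the definitions: $\pi_0(\bbx,c)=(x_0,c)$ and $p(x_0,c)=x_0=\Phi_c^0(x_0)$. For the inductive step, observe first that $\Gamma_+^c=\pi_0^{-1}(V_c\times\{c\})$ is forward invariant under $\Theta$: the $0$-th coordinate of $\varphi(\bbx)$ is $x_1=\Phi_c(x_0)$ (by the same equicontinuity argument as in the proof of Lemma~\ref{lemma:gammaclosed}, valid since $x_0\in V_c$), and $\Phi_c(x_0)\in\Phi_c(V_c)\subset V_c$; iterating, the whole forward $\Theta$-orbit of $(\bbx,c)$ stays in $\Gamma_+^c$. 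Hence the semiconjugacy \eqref{Eq_conj}, $p\circ\pi_0\circ\Theta=\Phi_c\circ p\circ\pi_0$, may be evaluated at $\Theta^s(\bbx,c)$:
\begin{align*}
(p\circ\pi_0)\big(\Theta^{s+1}(\bbx,c)\big)
&=(p\circ\pi_0\circ\Theta)\big(\Theta^s(\bbx,c)\big)
=\Phi_c\big((p\circ\pi_0)(\Theta^s(\bbx,c))\big)\\
&=\Phi_c\big(\Phi_c^s(x_0)\big)=\Phi_c^{s+1}(x_0),
\end{align*}
using the inductive hypothesis in the third equality. This closes the induction, and together with the previous step it proves the lemma.

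There is essentially no analytic content here; the whole argument is bookkeeping between $\varphi$, $\Theta$, $\pi_0$ and $p$. The only point needing a moment's attention — what I would call the main (minor) obstacle — is checking that \eqref{Eq_conj} is applied within its domain of validity, i.e.\ that $\Theta^s(\bbx,c)\in\Gamma_+^c$ for every $s\ge 0$; this is the forward invariance of $\Gamma_+^c$ noted above, which rests only on $\Phi_c(V_c)\subset V_c$ together with the fact that a trajectory in $E_c$ whose current coordinate lies in $V_c$ has its next coordinate equal to $\Phi_c$ of the current one.
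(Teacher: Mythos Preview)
Your proof is correct and follows essentially the same approach as the paper's: both reduce the claim to the orbit identity $(p\circ\pi_0)(\Theta^s(\bbx,c))=\Phi_c^s(x_0)$ via the semiconjugacy \eqref{Eq_conj} and the fact that pushforward sends Dirac masses to Dirac masses. The only difference is cosmetic: the paper computes directly on Borel sets and invokes \eqref{Eq_conj} in one line, while you spell out the induction and the forward invariance of $\Gamma_+^c$ needed to iterate the semiconjugacy --- a point the paper leaves implicit.
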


\begin{proof}
Let $(\bbx,c) \in \Gamma_+$ and $B \subset  V_c$ be a Borel set. Then we have
\begin{eqnarray*}
(p\circ \pi_0)^*(\tilde{\Lambda}_t(\bbx,c))(B) &=& \tilde{\Lambda}_t(\bbx,c)((p \circ \pi_0)^{-1}(B))\\
&=& \frac{1}{t} \sum_{s=0}^{t-1}\delta_{\Theta^s(\bbx,c)}((p \circ \pi_0)^{-1}(B))\\
&=& \frac{1}{t} \sum_{s=0}^{t-1}\delta_{\Phi^s_c\circ p \circ \pi_0(\bbx,c))}(B)\\
&=&  \frac{1}{t} \sum_{s=0}^{t-1}\delta_{\Phi_c^s(x_0)}(B)\\
&=& \Lambda_t(x_0,c)(B).
\end{eqnarray*}
The third equality is a consequence of the semi conjugacy (\ref{Eq_conj}).
\end{proof}

Since $\Gamma_+$ and $\Gamma_0$ are positively invariant and compact sets, it follows from classical results in dynamical systems theory (see e.g. \cite{katok-hasselblatt-95}):
\begin{lemma}\label{prop:mesinvtheta}
$\Inv(\Theta)(\Gamma_+)$ and $\Inv(\Theta)(\Gamma_0)$ are compact and convex subsets of $\mathcal{P}(\Gamma)$.
\end{lemma}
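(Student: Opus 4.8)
The plan is to establish the two claimed properties — compactness and convexity — separately, using only that $\Gamma_+$ and $\Gamma_0$ are compact positively invariant subsets of $\Gamma$ and that $\Theta$ restricted to $\Gamma$ is a continuous (indeed homeomorphic) self-map. Throughout, $\mathcal{P}(\Gamma)$ carries the weak$^*$ topology, under which it is itself compact and metrizable because $\Gamma$ is compact metrizable (Lemma~\ref{lemma:gammaclosed}).

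First I would treat convexity, which is routine: if $\nu_1,\nu_2\in\Inv(\Theta)(\Gamma_+)$ and $\alpha\in[0,1]$, then for every Borel set $B$ one has $(\alpha\nu_1+(1-\alpha)\nu_2)(\Theta^{-1}(B)) = \alpha\nu_1(\Theta^{-1}(B)) + (1-\alpha)\nu_2(\Theta^{-1}(B)) = \alpha\nu_1(B)+(1-\alpha)\nu_2(B)$, so the convex combination is again $\Theta$-invariant; and since each $\nu_j$ is supported on the (compact, hence Borel) set $\Gamma_+$, so is the combination. The same argument applies verbatim to $\Gamma_0$.

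Next I would handle compactness. Since $\mathcal{P}(\Gamma)$ is weak$^*$ compact, it suffices to show $\Inv(\Theta)(\Gamma_+)$ is weak$^*$ closed; the key point is that being a $\Theta$-invariant measure and being supported on $\Gamma_+$ are both closed conditions. For invariance, a measure $\nu$ is $\Theta$-invariant iff $\int f\circ\Theta\,d\nu = \int f\,d\nu$ for all $f\in C(\Gamma)$; each such identity defines a weak$^*$ closed set (because $f\circ\Theta\in C(\Gamma)$ by continuity of $\Theta$), and the set of invariant measures is the intersection of these closed sets over all $f\in C(\Gamma)$, hence closed. For the support condition, $\{\nu\in\mathcal{P}(\Gamma): \nu(\Gamma_+)=1\}$ is weak$^*$ closed because $\Gamma_+$ is compact: indeed $\Gamma_+^c$ is open, so $\nu\mapsto\nu(\Gamma_+^c)$ is weak$^*$ lower semicontinuous, making $\{\nu(\Gamma_+^c)=0\}$ closed. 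Thus $\Inv(\Theta)(\Gamma_+)$ is a closed subset of the compact space $\mathcal{P}(\Gamma)$, hence compact; and again the identical reasoning applies to $\Gamma_0$, using that $\Gamma_0$ is compact by construction. Nonemptiness, if one wants it, follows from the Krylov--Bogolyubov argument applied to the continuous map $\Theta$ on the nonempty compact positively invariant sets $\Gamma_+$, $\Gamma_0$.

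I do not expect a genuine obstacle here: the statement is the standard fact that, for a continuous self-map of a compact metric space, the set of invariant probability measures (or those concentrated on a fixed compact invariant subset) is a nonempty compact convex set, and the only thing to be careful about is that $\Theta$ is merely continuous (not that this matters — we never used invertibility) and that $\Gamma_+,\Gamma_0$ really are compact and positively invariant, which was arranged in Section~\ref{partB}. The mildest subtlety is pinning down the support condition as closed, which I would phrase via lower semicontinuity of $\nu\mapsto\nu(O)$ on open sets $O$ as above rather than upper semicontinuity on closed sets, since the latter can fail without compactness of $\Gamma$ — but here $\Gamma$ is compact so either formulation is fine.
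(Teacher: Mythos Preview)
Your argument is correct and is precisely the standard proof of this classical fact; the paper does not give an independent proof but simply cites the result from \cite{katok-hasselblatt-95}, noting that $\Gamma_+$ and $\Gamma_0$ are compact and positively invariant. Your write-up is therefore a faithful expansion of what the paper defers to a reference.
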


Since $\Gamma_+^c$ is positively $\Theta$-invariant and compact for all $c\in C$, Theorem 6.9 in \cite{walters-82} implies
 \begin{lemma}\label{prop_walters}
 For all $c\in C$ and all $\gamma \in \Gamma_+^c$, the set of all weak$^*$ limit point of the family of probability measures $\{\tilde{\Lambda}_t(\gamma)\}_{t\in \N}$ is a non-empty compact subset of $\Inv(\Theta)(\Gamma_+^c)$.
\end{lemma}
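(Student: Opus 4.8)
The plan is to invoke the classical Krylov--Bogolyubov time-averaging argument (which is the content of Theorem~6.9 in \cite{walters-82}) adapted to the present setting. First I would record the topological setup: $\Gamma_+^c$ is a closed subset of $\mathcal{T}\times C$, hence a compact metric space (the pseudo-metric on $C$ induced by the sup-norm on $N_1(S_A)$ suffices, since by construction $C$ with that pseudo-metric is compact and the ambient product is metrizable), so by the Riesz representation theorem together with Banach--Alaoglu the set $\mathcal{P}(\Gamma_+^c)$ of Borel probability measures on $\Gamma_+^c$ is weak$^*$ compact and metrizable.

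Next I would establish non-emptiness and the correct support. Because $\Gamma_+^c$ is positively $\Theta$-invariant and $\gamma\in\Gamma_+^c$, every iterate $\Theta^s(\gamma)$ lies in $\Gamma_+^c$, so each $\tilde{\Lambda}_t(\gamma)=\frac1t\sum_{s=0}^{t-1}\delta_{\Theta^s(\gamma)}$ is a probability measure carried by $\Gamma_+^c$, i.e.\ a point of the compact metrizable set $\mathcal{P}(\Gamma_+^c)$; hence $\{\tilde{\Lambda}_t(\gamma)\}_{t\in\N}$ has at least one weak$^*$ limit point, and by the portmanteau theorem every such limit point is again supported on the closed set $\Gamma_+^c$.

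Then I would check $\Theta$-invariance of an arbitrary limit point $\mu=\lim_{k}\tilde{\Lambda}_{t_k}(\gamma)$. For $f\in C(\Gamma_+^c)$ a telescoping sum gives
\[
\int f\circ\Theta\,d\tilde{\Lambda}_t(\gamma)-\int f\,d\tilde{\Lambda}_t(\gamma)=\tfrac1t\bigl(f(\Theta^{t}(\gamma))-f(\gamma)\bigr),
\]
whose absolute value is at most $2\|f\|_\infty/t\to0$; letting $t=t_k\to\infty$ and using that $f\circ\Theta$ is continuous ($\Theta$ is a homeomorphism) yields $\int f\circ\Theta\,d\mu=\int f\,d\mu$ for all $f\in C(\Gamma_+^c)$, hence $\Theta^\ast\mu=\mu$, i.e.\ $\mu\in\Inv(\Theta)(\Gamma_+^c)$. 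Finally, the set of weak$^*$ limit points of a sequence in the compact metric space $\mathcal{P}(\Gamma_+^c)$ is automatically closed, hence compact, and by the previous step it is contained in $\Inv(\Theta)(\Gamma_+^c)$; this delivers the claimed non-empty compact subset. I do not expect a genuine obstacle here: the only care needed is in the topological preliminaries (compactness and metrizability of $\Gamma_+^c$, and its positive $\Theta$-invariance), all of which are already supplied by Lemma~\ref{lemma:gammaclosed} and the definitions of $V_c$, $E_c$, and $\Gamma_+^c$ preceding the statement, after which everything reduces to the standard Bogolyubov argument.
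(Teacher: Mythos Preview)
Your proposal is correct and is essentially the same approach as the paper: the paper simply invokes Theorem~6.9 of \cite{walters-82} after noting that $\Gamma_+^c$ is compact and positively $\Theta$-invariant, and you have merely unpacked that Krylov--Bogolyubov argument explicitly. The only cosmetic point is that $\Theta$ need not be a homeomorphism of $\Gamma_+^c$ onto itself (only positive invariance is asserted), but your argument only uses continuity of $\Theta|_{\Gamma_+^c}$, so nothing is affected.
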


\begin{lemma}\label{lemma:inv-measure}
For all $c\in C$, $\Inv(\Phi_c)(V_c) = (p \circ \pi_0)^*(\Inv(\Theta)(\Gamma_+^c)$
\end{lemma}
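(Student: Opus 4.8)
The plan is to prove the two inclusions separately, using the semi-conjugacy \eqref{Eq_conj} and the machinery already set up for empirical measures. For the inclusion $(p\circ\pi_0)^*(\Inv(\Theta)(\Gamma_+^c))\subset\Inv(\Phi_c)(V_c)$: take $\tilde\nu\in\Inv(\Theta)(\Gamma_+^c)$ and set $\nu=(p\circ\pi_0)^*(\tilde\nu)$. First I would check $\nu$ is supported on $V_c$, which is immediate since $p\circ\pi_0(\Gamma_+^c)=V_c\times\{c\}\to V_c$. Then for a Borel set $B\subset V_c$, using \eqref{Eq_conj} in the form $p\circ\pi_0\circ\Theta=\Phi_c\circ p\circ\pi_0$ on $\Gamma_+^c$, one gets $(p\circ\pi_0)^{-1}(\Phi_c^{-1}(B))=\Theta^{-1}((p\circ\pi_0)^{-1}(B))$ \emph{restricted to $\Gamma_+^c$} (here one must be slightly careful: the identity of preimages holds on the invariant set $\Gamma_+^c$ on which $\tilde\nu$ lives, so it suffices that $\tilde\nu(\Gamma_+^c)=1$). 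Hence $\nu(\Phi_c^{-1}(B))=\tilde\nu((p\circ\pi_0)^{-1}(\Phi_c^{-1}(B)))=\tilde\nu(\Theta^{-1}((p\circ\pi_0)^{-1}(B)))=\tilde\nu((p\circ\pi_0)^{-1}(B))=\nu(B)$, using $\Theta$-invariance of $\tilde\nu$. So $\nu\in\Inv(\Phi_c)(V_c)$.

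For the reverse inclusion $\Inv(\Phi_c)(V_c)\subset(p\circ\pi_0)^*(\Inv(\Theta)(\Gamma_+^c))$, the natural approach is via empirical measures and Lemma~\ref{lemma_mes_emp}. Given $\nu\in\Inv(\Phi_c)(V_c)$, I would first reduce to ergodic $\nu$ (the general case follows by an ergodic decomposition together with the fact that $(p\circ\pi_0)^*$ is affine and continuous and $\Inv(\Theta)(\Gamma_+^c)$ is compact and convex by Lemma~\ref{prop:mesinvtheta}). For ergodic $\nu$, pick a generic point $x\in V_c$ so that $\Lambda_t(x,c)\to\nu$ weak$^*$ (Birkhoff), lift $x$ to a $\Phi_c$-total trajectory $\gamma=(\bbx,c)\in\Gamma_+^c$ with $x_0=x$ (such a lift exists by property (ii) of $V_c$, giving a $\Phi_c$-positive trajectory, which then lies in $\Gamma_+^c$). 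By Lemma~\ref{prop_walters}, $\{\tilde\Lambda_t(\gamma)\}_t$ has a weak$^*$ limit point $\tilde\nu\in\Inv(\Theta)(\Gamma_+^c)$ along some subsequence $t_k$. By Lemma~\ref{lemma_mes_emp}, $(p\circ\pi_0)^*(\tilde\Lambda_{t_k}(\gamma))=\Lambda_{t_k}(x_0,c)=\Lambda_{t_k}(x,c)$, and since $(p\circ\pi_0)^*$ is weak$^*$ continuous, passing to the limit gives $(p\circ\pi_0)^*(\tilde\nu)=\lim_k\Lambda_{t_k}(x,c)=\nu$. Hence $\nu\in(p\circ\pi_0)^*(\Inv(\Theta)(\Gamma_+^c))$.

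The main obstacle, and the step deserving the most care, is the existence of a Birkhoff-generic point $x\in V_c$ for which the \emph{liftability} to a total trajectory in $\Gamma_+^c$ is guaranteed simultaneously; here property (ii) of the maximal set $V_c$ ($\Phi_c(V_c)\subset V_c$) does the work, since it lets one inductively choose $x_{t+1}=\Phi_c(x_t)\in V_c$ starting from any $x_0\in V_c$, and prepending the frozen past $x_t=x^*$ for $t<0$ places the trajectory in $G_c\subset E_c$, hence in $\Gamma_+^c$. A secondary subtlety is the set-theoretic preimage identity used in the first inclusion: it need only hold $\tilde\nu$-almost everywhere, i.e. on $\Gamma_+^c$, which is exactly where the semi-conjugacy \eqref{Eq_conj} is valid; one should phrase it so as not to claim a global identity of preimages on all of $\Gamma$. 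Finally, the reduction to the ergodic case in the second inclusion should be stated carefully: write $\nu=\int\nu_\omega\,d\rho(\omega)$ with $\nu_\omega$ ergodic, lift each to $\tilde\nu_\omega\in\Inv(\Theta)(\Gamma_+^c)$, and set $\tilde\nu=\int\tilde\nu_\omega\,d\rho(\omega)$, which lies in the compact convex set $\Inv(\Theta)(\Gamma_+^c)$ and satisfies $(p\circ\pi_0)^*(\tilde\nu)=\nu$ by linearity and continuity of $(p\circ\pi_0)^*$.
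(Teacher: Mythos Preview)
Your proposal is correct and follows essentially the same route as the paper: one inclusion via the semi-conjugacy \eqref{Eq_conj} and a direct preimage computation, and the other by reducing to ergodic measures, picking a generic point, lifting it to $\Gamma_+^c$, and pushing forward a weak$^*$ limit of the lifted empirical measures via Lemmas~\ref{lemma_mes_emp} and~\ref{prop_walters}. Your treatment is in fact slightly more careful than the paper's in flagging that the preimage identity only holds on $\Gamma_+^c$ and in spelling out why the ergodic reduction suffices (using compactness and convexity of $\Inv(\Theta)(\Gamma_+^c)$ and affinity of $(p\circ\pi_0)^*$).
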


\begin{proof}
Fix $c\in C$ . First we prove $\Inv(\Phi_c) \subset (p \circ \pi_0)^*(\Inv(\Theta))$. We show that the inclusion is satisfied for the set of $\Phi_c$~-~ergodic measure, and then the general case follows from the \emph{ergodic decomposition theorem}. Let $\mu \in \Inv(\Phi_c)$ be an ergodic measure. Since $\mu$ is ergodic, there exist $x\in V_c$ such that $\mu = \lim_{t \to \infty}\Lambda_t(x,c)$. There exists $(\bbx,c) \in \pi_0^{-1}(x,c) \subset \Gamma_+^c$. By compactness of $\mathcal{P}(\Gamma_+^c)$, let $\tilde{\mu}:= \lim_{k \to \infty}\tilde{\Lambda}_{t_k}(\bbx,c)$. Lemma \ref{prop_walters} implies that $\tilde{\mu} \in \Inv(\Theta)(\Gamma_+^c)$. Continuity of $p \circ \pi_0$ and Lemma~\ref{lemma_mes_emp} imply that $(p \circ \pi_0)^*(\tilde{\mu})=\mu$.

We now prove $\Inv(\Phi_c) \supset (p \circ \pi_0)^*(\Inv(\Theta))$. Let $\tilde{\mu} \in \Inv(\Theta)(\Gamma_+)$. Therefore the measure $(p \circ \pi_0)^*(\tilde{\mu})$ is supported by $V_c$. Let $B \subset V_c$ be a Borel set. We have
\begin{eqnarray*}
(p \circ \pi_0)^*(\tilde{\mu})(\Phi_c^{-1}(B)) &=&  \tilde{\mu}((p \circ \pi_0)^{-1}(\Phi_c^{-1}(B)))\\
&=& \tilde{\mu}((p \circ \pi_0)^{-1}(\Phi_c^{-1}(B) \cap V_c))\\
&=& \tilde{\mu}((\restr{\Phi_c}{V_c} \circ p \circ \pi_0)^{-1}(B))\\
&=& \tilde{\mu}((p\circ \pi_0\circ \restr{\Theta}{\Gamma_+^c})^{-1}(B))\\
&=& \tilde{\mu}((p\circ \pi_0)^{-1}(B))\\
&=& (p\circ\pi_0)^*(\tilde{\mu})(B).
\end{eqnarray*}
The second equality follows from the fact that the support of $\tilde{\mu}$ is inclued in $\Gamma_+$, and the fourth is a consequence of the conjugacy (\ref{Eq_conj}). This show that $(p \circ \pi_0)^*(\Inv(\Theta)(\Gamma_+^c))\subset \Inv(\Phi_c)(V_c)$, and then concludes the proof. \end{proof}

The map $\Theta$ on $\Gamma_+$ can be seen as the extension of the map $\Phi$ on $\bigcup_{c\in C}\left(V_c  \times \{c\}\right)$. Now, we define the long-term growth rates of the matrix products (\ref{matrixproducts}) over the extended dynamics $\Theta$.

Recall that by definition of a $\delta$-perturbation $c\in C$, there exist $c_i:\R^n\rightarrow \mathbf{M}_{n_i}(\R)$, for each $i\in\{1,\dots,m\}$ such that for each $x\in \R_+^n$, $c(x)=\diag(c_1(x),\dots,c_n(x))$.

For each $i\in \{1,\dots,m\}$, define the maps $\tilde{A}_i : \Gamma \rightarrow \mathbf{M}_{n_i}(\R)$ by
\[
\tilde{A}_i(\bbx,c)= c_i(x_0)
\]
We write
\begin{equation}\label{def:cocycle}
\tilde{A}^t_i(\bbx,c) := \tilde{A}_i(\bbx,c)\cdots \tilde{A}_i(\Theta^{t-1}(\bbx,c)).
\end{equation}
The conjugacy (\ref{Eq_conj}) implies that for all $c\in C$ and $x \in V_c$, we have
\begin{equation}\label{eq:cocycle}
\tilde{A}^t_i(\bbx,c)=c_i^t(x),
\end{equation}
for all $t\ge 0$ and all $\bbx \in \mathcal{T}$ such that $(\bbx,c) \in \pi_0^{-1}(x)$. Since there is no possible confusion, from now on we write $A_i$ instead of $\tilde{A}_i$.

Then the \emph{long-term growth rates} for the product (\ref{def:cocycle}) is
\[
\rr_i(\bbx,c) := \limsup_{t \to \infty}\frac{1}{t} \ln \Vert A^t_i(\bbx,c)\Vert,
\]
and, for a $\Theta$-invariant measure $\tilde{\mu}$, the \emph{long-term growth rates} is
\[
\rr_i(\tilde{\mu}) = \int_{\Gamma}\rr_i(\bbx,c)d\tilde{\mu}.
\]
\begin{proposition}\label{prop:equality-lyap}
For all species $i$ and all $c\in C$, we have
\begin{enumerate}
\item[(i)]  $\rr_i^c(x):= \limsup_{t\to\infty} \frac{1}{t} \log \| c_i(X_0)\dots c_i(X_{t-1})\| = \rr_i(\bbx,c)$, for all $x \in V_c$ and for all $\bbx \in \mathcal{T}$ such that $(\bbx,c) \in\pi_0^{-1}(x,c)$,
\item[(ii)] for all $\tilde{\mu} \in \Inv(\Theta)(\Gamma_+^c)$, we have \[
\rr_i(\tilde{\mu})=\rr_i((\pi_0\circ p)^*(\tilde{\mu})).
\]
\end{enumerate}

\end{proposition}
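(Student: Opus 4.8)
The plan is to deduce both parts of Proposition~\ref{prop:equality-lyap} from the semi-conjugacy~\eqref{Eq_conj} and its cocycle consequence~\eqref{eq:cocycle}. The core observation is that the two long-term growth rates being compared are built from \emph{the same} matrix products: for $x\in V_c$ and $(\bbx,c)\in\pi_0^{-1}(x,c)$, equation~\eqref{eq:cocycle} gives $\tilde A_i^t(\bbx,c)=c_i^t(x)=c_i(X_0)\cdots c_i(X_{t-1})$ where $X_s=\Phi_c^s(x)$, so the norms inside the $\limsup$ defining $\rr_i^c(x)$ and $\rr_i(\bbx,c)$ agree for every $t$. Part~(i) is then immediate — the two $\limsup$s are literally of the same sequence. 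I would state~\eqref{eq:cocycle} carefully (it already appears above, derived from~\eqref{Eq_conj} by induction on $t$) and then observe the identification of sequences, giving (i) in one line.

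For part~(ii), fix $\tilde\mu\in\Inv(\Theta)(\Gamma_+^c)$ and set $\mu=(p\circ\pi_0)^*(\tilde\mu)$. By Lemma~\ref{lemma:inv-measure}, $\mu\in\Inv(\Phi_c)(V_c)$, so $\rr_i(\mu)$ is well-defined via the subadditive ergodic theorem. The goal is
\[
\int_\Gamma \rr_i(\bbx,c)\,d\tilde\mu(\bbx,c)=\int_{V_c}\rr_i^c(x)\,d\mu(x).
\]
The natural route is the change-of-variables formula for pushforward measures: for any $\tilde\mu$-integrable $g$ on $\Gamma$ that factors as $g=h\circ(p\circ\pi_0)$, one has $\int g\,d\tilde\mu=\int h\,d\mu$. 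By part~(i), $\rr_i(\bbx,c)=\rr_i^c(x_0)$ depends on $(\bbx,c)$ only through $p\circ\pi_0(\bbx,c)=(x_0,c)$, hence it does factor through $p\circ\pi_0$ — this is exactly the hypothesis needed. Applying the change-of-variables formula with $h(x)=\rr_i^c(x)$ then yields~(ii). The only points requiring care are measurability and integrability: $\rr_i^c$ is a $\limsup$ of continuous functions hence Borel measurable, and it is bounded above (by $0$ on the relevant region, or crudely by $\sup_{x\in N_1(S_A)}\log\|c(x)\|$ care of \textbf{H1} and compactness) and bounded below by the same token, so it is $\mu$-integrable and $\rr_i(\bbx,c)$ is $\tilde\mu$-integrable; I would record these bounds explicitly so the pushforward identity for integrals is justified.

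The main obstacle — really the only nontrivial point — is making sure the factorization $\rr_i(\bbx,c)=h(p\circ\pi_0(\bbx,c))$ is legitimate on all of $\Gamma_+^c$, not merely on $\pi_0^{-1}(V_c\times\{c\})$ where~\eqref{eq:cocycle} was stated: one must check that for $(\bbx,c)\in\Gamma_+^c$ the forward coordinates $x_1,x_2,\dots$ of $\bbx$ indeed satisfy $x_{s+1}=\Phi_c(x_s)$ (this is built into the definition of $\Gamma_+^c$ via $G_c$ and $E_c$, since $\Gamma_+^c=\pi_0^{-1}(V_c\times\{c\})$ consists of trajectories whose nonnegative part is a genuine $\Phi_c$-positive trajectory), so that $\tilde A_i(\Theta^s(\bbx,c))=c_i(x_s)$ for all $s\ge 0$ and the product telescopes as claimed. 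Once that is in place, everything reduces to the pushforward change-of-variables formula, which is routine. I would therefore organize the write-up as: (1) recall~\eqref{eq:cocycle} and note the sequences coincide, giving~(i); (2) observe $\rr_i(\cdot)$ factors through $p\circ\pi_0$; (3) record measurability and the uniform bound; (4) invoke the pushforward integral identity to conclude~(ii).
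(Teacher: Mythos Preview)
Your proposal is correct and matches the paper's approach exactly: the paper's proof is a one-liner, stating that (i) follows from equality~\eqref{eq:cocycle} and that (ii) follows from (i) together with Lemma~\ref{lemma:inv-measure}. You have simply unpacked these two sentences with the appropriate details (factorization through $p\circ\pi_0$, pushforward change of variables, measurability and boundedness), which is precisely what is needed.
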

\begin{proof}
Assertion (i) is a consequence of equality (\ref{eq:cocycle}), and assertion (ii) is a consequence of assertion (i) and Lemma \ref{lemma:inv-measure}.
\end{proof}

\subsection{Properties of long-term growth rates\label{partC}}

In this section, we first state an extension of Proposition 3.2 of \cite{ruelle-79b} that has been proved in \cite{roth-schreiber-14}. We use this extension to deduce some properties on the long-term growth rates which are crucial for the proof of Propositions~\ref{proposition-convergence-empiricalmeasures} and \ref{proposition:robust-persistence}.

\begin{proposition}[Proposition 8.13 in \cite{roth-schreiber-14}]\label{prop:ruelle}
Let $\Xi$ be a compact space, $\Psi :  \Xi \rightarrow \Xi$ be an homeomorphism. Consider a continuous map $T :  \Xi \rightarrow \mathbf{M}_d(\R)$ and its transpose $T^*$ defined by $T^*(\xi)=T(\xi)^*$. Write
\[
T^t(\xi)= T(\xi)\cdots T(\Psi^{t-1}\xi),
\]
and assume that
\begin{enumerate}
\item[\textbf{A1:}] for all $\xi \in \Xi$, $T(\xi)\interior \R^d_+ \subset \interior \R^d_+$, and
\item[\textbf{A2:}] there exists $s\ge1$ such that, for all $\xi \in \Xi$, $T(\xi)\cdots T(\Psi^{s-1}\xi)(\R^d) \subset \{0\} \cup \interior \R^d$.
\end{enumerate}

Then there exist continuous maps $u, v : \Xi  \rightarrow \R^d_+$ with $\Vert u(\xi)\Vert =\Vert v(\xi)\Vert =1$ for all $\xi \in \Xi$ such that
\begin{itemize}
\item[(i)] the line bundles $E$ (resp. $F$) spanned by $u(\cdot)$ (resp. $v(\cdot)$) are such that $\R^{d} = E\bigoplus F^{\perp}$ where $b \in F(\xi)^{\perp}$ if and only if $\langle \xi, v(\xi)\rangle =0$.
\item[(ii)]  $E$ (resp. $F$) is $T,\Psi$-invariant (resp. $T^*,\Phi^{-1}$-invariant), i.e. $E(\Psi(\xi)) = E(\xi)T(\xi)$ and $F(\Psi \xi)T^*(\Psi\xi)=F(\xi)$, for all $\xi \in \Xi$;
\item[(iii)] there exist constants $\alpha <1$ and $C>0$ such that for all $t\ge 0$, and $\xi \in \Xi$,
\[
\Vert b T(\xi)\cdots T(\Psi^{t-1}\xi) \Vert \le C\alpha^t \Vert a T(\xi)\cdots T(\Psi^{t-1}\xi) \Vert,
\]
for all unit vectors $a \in E(\xi), b \in F(\xi)^{\perp}$.
\end{itemize}
\end{proposition}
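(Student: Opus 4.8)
The plan is to prove this as a Perron--Frobenius/Oseledets-type splitting for products of cone-preserving matrices over the compact base $\Xi$, using the Hilbert projective metric $d_H$ on $\interior\R^d_+$ together with Birkhoff's contraction theorem. (I read \textbf{A2} as asserting strict positivity with respect to the cone $\R^d_+$, and in~(i) as $\langle b,v(\xi)\rangle=0$.) First I would extract the uniform estimates that compactness supplies: by \textbf{A1} each $T(\xi)$ is a non-expansion of $(\interior\R^d_+,d_H)$, and by \textbf{A2} the matrix $T(\xi)\cdots T(\Psi^{s-1}\xi)$ is strictly positive, so its image cone $\R^d_+\,T(\xi)\cdots T(\Psi^{s-1}\xi)$ has $d_H$-diameter bounded by some $D_0<\infty$ \emph{independently of $\xi$} (continuity of $T$, compactness of $\Xi$); Birkhoff's theorem then gives a uniform contraction factor $\tau:=\tanh(D_0/4)<1$ for the $s$-step products. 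Passing from $s>1$ to single steps needs nothing extra: \textbf{A1} keeps the relevant cone sequences monotone at every step, while \textbf{A2} supplies a genuine contraction once per block of $s$ steps.

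Next I would construct the line bundle $E$. For fixed $\xi$ set $K_t(\xi):=\R^d_+\,T(\Psi^{-t}\xi)\cdots T(\Psi^{-1}\xi)\subset\R^d_+$. By \textbf{A1} these cones are nested, $K_{t+1}(\xi)\subset K_t(\xi)$, and by the uniform contraction their $d_H$-diameter is at most $D_0\,\tau^{\lfloor t/s\rfloor}\to0$ uniformly in $\xi$; hence $\bigcap_{t\ge0}K_t(\xi)$ is a single ray, whose unit vector I call $u(\xi)$, and I set $E(\xi)=\R u(\xi)\subset\interior\R^d_+$. Uniformity of the rate plus continuity of $T$ and $\Psi$ makes $u$ a uniform limit of continuous maps, hence continuous; and the identity $K_{t+1}(\Psi\xi)=K_t(\xi)T(\xi)$ passes to the limit to give $E(\Psi\xi)=E(\xi)T(\xi)$ (using that $u(\xi)T(\xi)\in\interior\R^d_+$ is nonzero). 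Applying the same construction to the transpose cocycle $T^*$ with the time direction reversed yields a continuous unit map $v\colon\Xi\to\interior\R^d_+$ spanning a line bundle $F$ for which the hyperplane bundle $\xi\mapsto F(\xi)^\perp$ is forward-invariant under $T$ (i.e. $b\in F(\xi)^\perp\Rightarrow bT(\xi)\in F(\Psi\xi)^\perp$) and for which the invariance relation of~(ii) holds. Since $u(\xi),v(\xi)\in\interior\R^d_+$, their inner product is positive and, by compactness, bounded below by some $c>0$; hence $\R^d=E(\xi)\oplus F(\xi)^\perp$ with uniformly bounded oblique projections, which gives~(i) and~(ii).

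For the uniform gap~(iii), fix $\xi$ and a unit vector $b\in F(\xi)^\perp$. Since $u(\xi)$ lies uniformly in the interior of the cone, there is $\eps_0>0$ (independent of $\xi$ and $b$) with $u(\xi)+\eps_0 b\in\interior\R^d_+$, so $\eps_0 b=(u(\xi)+\eps_0 b)-u(\xi)$ is a difference of two cone vectors. Write $P:=(u(\xi)+\eps_0 b)\,T^t(\xi)$ and $Q:=u(\xi)\,T^t(\xi)$; both lie in the cone $\R^d_+\,T^t(\xi)$, whose $d_H$-diameter is at most $D_0\tau^{\lfloor t/s\rfloor}$, so their normalizations differ by at most $c_1\tau^{\lfloor t/s\rfloor}$ in norm. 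Pairing with $v(\Psi^t\xi)$ and using $b\,T^t(\xi)\in F(\Psi^t\xi)^\perp$ gives $\langle P,v(\Psi^t\xi)\rangle=\langle Q,v(\Psi^t\xi)\rangle$; since on this cone the functional $w\mapsto\langle w,v(\Psi^t\xi)\rangle$ is comparable to $\|w\|$ with a constant uniform in $t$ and $\xi$, it follows that $\|P\|$ and $\|Q\|$ differ by a factor $1+O(\tau^{\lfloor t/s\rfloor})$, whence $\|P-Q\|\le c_3\tau^{\lfloor t/s\rfloor}\|Q\|$. Therefore $\|b\,T^t(\xi)\|\le(c_3/\eps_0)\tau^{\lfloor t/s\rfloor}\|u(\xi)\,T^t(\xi)\|$, which is~(iii) with $\alpha:=\tau^{1/s}$ and a uniform constant $C$, since any unit $a\in E(\xi)$ equals $\pm u(\xi)$.

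The one genuinely delicate step is the last one: converting the \emph{projective} contraction of the cone into a \emph{norm} gap between $E$ and $F^\perp$, i.e. the comparison $\|P\|\asymp\|Q\|$. This is exactly where the orthogonality $b\,T^t(\xi)\perp v(\Psi^t\xi)$ --- hence the construction of $F$ and the invariance of $F^\perp$ --- is used, and it is the heart of Ruelle's argument (Proposition~3.2 of \cite{ruelle-79b}); everything else is the standard cone-contraction machinery. The recurring technical point is that \emph{all} constants ($D_0$, $\tau$, $\eps_0$, $c$, and the bi-Lipschitz constants relating $d_H$ to $\|\cdot\|$ on interior subcones) must be taken uniform over $\Xi$, which is precisely what compactness of $\Xi$, continuity of $T$ and $\Psi$, and the \emph{uniform} choice of $s$ in \textbf{A2} deliver.
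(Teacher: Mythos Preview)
The paper does not prove this proposition at all: it is quoted verbatim as Proposition~8.13 of \cite{roth-schreiber-14} (itself an extension of Proposition~3.2 of \cite{ruelle-79b}), and is used in the paper only as a black box to produce the continuous direction fields $u_i,v_i$ and the exponential gap needed in Propositions~\ref{prop_HS}--\ref{prop_lambdanul}. So there is no ``paper's own proof'' to compare against.

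That said, your sketch is the right one and is essentially Ruelle's argument: Birkhoff contraction of the Hilbert projective metric on $\interior\R^d_+$, uniform over $\xi$ by compactness and the uniform $s$ in \textbf{A2}, gives the invariant positive directions $u$ and $v$ as intersections of nested cone images along backward (resp.\ forward, for the transpose cocycle) orbits; positivity of $\langle u(\xi),v(\xi)\rangle$ gives the splitting $E\oplus F^\perp$; and the projective contraction upgraded via the functional $\langle\cdot,v(\Psi^t\xi)\rangle$ yields the uniform exponential separation~(iii). One bookkeeping point to be careful about: the index in the dual invariance relation. For $F^\perp$ to be \emph{forward}-invariant under $(T,\Psi)$ you need $v(\Psi\xi)\,T(\xi)^*\in F(\xi)$, i.e.\ the adjoint cocycle should be taken as $\xi\mapsto T^*(\xi)$ over the base map $\Psi^{-1}$ (so that the ``backward'' cone construction for $v$ runs along \emph{forward} $\Psi$-orbits). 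The statement in the paper writes $F(\Psi\xi)T^*(\Psi\xi)=F(\xi)$, which is a harmless index shift of the same relation, but when you carry out the construction make sure your choice of base map for the transpose cocycle actually delivers $bT(\xi)\perp v(\Psi\xi)$ whenever $b\perp v(\xi)$ --- this is precisely what your step~(iii) uses, and it fails if the shift is set up the wrong way round. Apart from this, and your correct reading of \textbf{A2} as $T^s(\xi)\big(\R^d_+\setminus\{0\}\big)\subset\interior\R^d_+$, the argument is sound.
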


Assumptions \textbf{H1-H2} imply that each continuous map $A_i : \Gamma  \rightarrow \mathbf{M}_{n_i}(\R)$ satisfies assumptions \textbf{A1-A2}. Hence Proposition \ref{prop:ruelle} applies to each continuous map $A_i$, and to the homeomorphism $\Theta$ on the compact space $\Gamma$. Then, for each of those maps, there exist row vector maps $u_i(\cdot)$, $v_i(\cdot)$, their respective vector bundles $E_i(\cdot)$, $F_i(\cdot)$, and the constant $C_i, \alpha_i >0$ satisfying properties (i), (ii), and (iii) of Proposition \ref{prop:ruelle}.

For each $i\in \{1,\dots,m\}$, define the continuous map $\bzeta_i : \Gamma \rightarrow \R$ by
\[
\bzeta_i(\gamma) := \ln \Vert u_i(\gamma)A_i(\gamma)\Vert.
\]

In the rest of this subsection, we deduce from Proposition \ref{prop:ruelle} some crucial properties of the invasions rates.

\begin{proposition}[Proposition 8.14 in \cite{roth-schreiber-14}]\label{prop_HS}
For all $\gamma \in \Gamma$ and every population $i$, $\rr_i(\gamma)$ satisfies the following properties:
\begin{enumerate}
\item[(i)] \[
\rr_i(\gamma)=\limsup_{t \to \infty}\frac{1}{t} \ln \Vert vA^t_i(\gamma)\Vert,
\]
for all $v \in \R^{n_i}_+\backslash \{0\}$.

\item[(ii)] \[
\rr_i(\gamma)=\limsup_{t \to \infty}\frac{1}{t} \sum_{s=0}^{t-1}\bzeta_i(\Theta^s(\gamma)).
\]
\end{enumerate}
\end{proposition}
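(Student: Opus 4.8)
The plan is to deduce both assertions directly from Proposition~\ref{prop:ruelle}, which supplies the line bundles $E_i$, $F_i$, the unit row vectors $u_i(\cdot)$, $v_i(\cdot)$, and the exponential gap constants $C_i,\alpha_i$. I will fix a population $i$ and a trajectory $\gamma\in\Gamma$ and suppress the index $i$ where convenient.

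For assertion~(i), the inequality $\rr_i(\gamma)\ge \limsup_{t\to\infty}\frac1t\ln\|vA_i^t(\gamma)\|$ for any nonzero $v\in\R^{n_i}_+$ is immediate from submultiplicativity of the operator norm, since $\|vA_i^t(\gamma)\|\le\|v\|\,\|A_i^t(\gamma)\|$. For the reverse inequality I would use the splitting $\R^{n_i}=E_i(\gamma)\oplus F_i(\gamma)^\perp$ from part~(i) of Proposition~\ref{prop:ruelle}: write $v=a+b$ with $a\in E_i(\gamma)$, $b\in F_i(\gamma)^\perp$. Property~(iii) gives $\|bA_i^t(\gamma)\|\le C_i\alpha_i^t\|aA_i^t(\gamma)\|$, so $\|vA_i^t(\gamma)\|\ge\|aA_i^t(\gamma)\|-\|bA_i^t(\gamma)\|\ge(1-C_i\alpha_i^t)\|aA_i^t(\gamma)\|$, and for $t$ large the factor $(1-C_i\alpha_i^t)$ is bounded below by $1/2$. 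The key point is that the $E_i$-component $a$ must be nonzero: because $v\in\R^{n_i}_+\setminus\{0\}$ and (using \textbf{A2}) $vA_i^s(\gamma)$ has strictly positive entries, while $F_i(\gamma)^\perp$ is a hyperplane transverse to the strictly positive one-dimensional bundle $E_i$, so a strictly positive vector cannot lie in $F_i(\gamma)^\perp$; one may also reduce to $t\ge s$ and replace $v$ by $vA_i^s(\gamma)\in\interior\R^{n_i}$. Then $\frac1t\ln\|vA_i^t(\gamma)\|$ and $\frac1t\ln\|aA_i^t(\gamma)\|$ have the same $\limsup$, and since $E_i$ is one-dimensional and $u_i$ spans it, $\|aA_i^t(\gamma)\|$ differs from $\|u_i(\gamma)A_i^t(\gamma)\|$ only by the constant scalar $|a|$; finally $\|u_i(\gamma)A_i^t(\gamma)\|$ and $\|A_i^t(\gamma)\|$ have the same exponential growth rate (the upper bound is submultiplicativity; the lower bound again uses that every unit vector has a nonvanishing $E_i$-component after one application of $A_i^s$, together with property~(iii)). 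This yields $\rr_i(\gamma)=\limsup_t\frac1t\ln\|vA_i^t(\gamma)\|$.

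For assertion~(ii), I would exploit the $T,\Psi$-invariance of $E_i$ from part~(ii) of Proposition~\ref{prop:ruelle}: $E_i(\Theta\gamma)=E_i(\gamma)A_i(\gamma)$, hence $u_i(\Theta\gamma)$ is a unit vector proportional to $u_i(\gamma)A_i(\gamma)$, i.e. $u_i(\gamma)A_i(\gamma)=\|u_i(\gamma)A_i(\gamma)\|\,u_i(\Theta\gamma)=e^{\bzeta_i(\gamma)}u_i(\Theta\gamma)$. Iterating gives the telescoping identity $u_i(\gamma)A_i^t(\gamma)=\exp\!\big(\sum_{s=0}^{t-1}\bzeta_i(\Theta^s\gamma)\big)\,u_i(\Theta^t\gamma)$, and since $\|u_i(\Theta^t\gamma)\|=1$ we get $\ln\|u_i(\gamma)A_i^t(\gamma)\|=\sum_{s=0}^{t-1}\bzeta_i(\Theta^s\gamma)$. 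Combining with the fact established in part~(i) that $\|u_i(\gamma)A_i^t(\gamma)\|$ has the same exponential growth rate as $\|A_i^t(\gamma)\|$ (apply (i) with $v=u_i(\gamma)$, which is a nonnegative unit vector — or, if one worries about $u_i(\gamma)$ having a zero entry, first push it forward by $A_i^s$), we obtain $\rr_i(\gamma)=\limsup_{t\to\infty}\frac1t\sum_{s=0}^{t-1}\bzeta_i(\Theta^s\gamma)$.

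The only delicate point — and the place I expect to spend the most care — is the repeated use of positivity to guarantee that the relevant vectors have nonvanishing component along the dominant bundle $E_i$: one must invoke assumption \textbf{A2} (equivalently primitivity \textbf{H2}) to know that after $s$ steps any nonzero nonnegative vector becomes strictly interior, and then that $F_i(\gamma)^\perp$, being the orthogonal complement of the strictly positive vector $v_i(\gamma)$, contains no strictly positive vector, so the decomposition $v=a+b$ has $a\neq0$. Everything else is submultiplicativity of the norm and the exponential contraction estimate~(iii), handled exactly as in the proof of Proposition~3.2 of \cite{ruelle-79b}.
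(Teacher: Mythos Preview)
Your argument is correct and is the standard route to these assertions from the exponential separation supplied by Proposition~\ref{prop:ruelle}. Note, however, that the present paper does \emph{not} give its own proof of this proposition: it is quoted verbatim as Proposition~8.14 of \cite{roth-schreiber-14}, so there is no in-paper argument to compare against. What you have written is precisely the proof one would expect to find in that reference: submultiplicativity for the easy inequality in~(i); the splitting $E_i\oplus F_i^\perp$ and the gap estimate~(iii) for the reverse inequality, with positivity (via \textbf{A2}/\textbf{H2}) guaranteeing a nonzero $E_i$-component; and the telescoping identity $u_i(\gamma)A_i^t(\gamma)=\exp\bigl(\sum_{s=0}^{t-1}\bzeta_i(\Theta^s\gamma)\bigr)u_i(\Theta^t\gamma)$ coming from the $T,\Psi$-invariance of $E_i$ for~(ii).

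Two small points worth tightening when you write this up. First, to compare $\|A_i^t(\gamma)\|$ with $\|u_i(\gamma)A_i^t(\gamma)\|$ you should invoke compactness of $\Gamma$ and continuity of $u_i,v_i$ to get a \emph{uniform} bound on the norm of the oblique projection onto $E_i(\gamma)$ along $F_i(\gamma)^\perp$; this makes the constant in $\|A_i^t(\gamma)\|\le\mathrm{const}\cdot\|u_i(\gamma)A_i^t(\gamma)\|$ independent of $\gamma$. Second, your remark that $v_i(\gamma)$ is strictly positive (so that $F_i(\gamma)^\perp$ contains no strictly positive vector) is the right idea but, as stated, Proposition~\ref{prop:ruelle} only places $v_i(\gamma)$ in $\R_+^{n_i}$; your fallback of first pushing $v$ forward by $A_i^s$ into $\interior\R_+^{n_i}$ via \textbf{A2} is the clean way to close this, and you should lead with that rather than treat it as an afterthought.
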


\begin{proposition}\label{prop_intlambda}
The invasion rate of each population $i$ with respect to an $\Theta$-invariant measure $\tilde{\mu}$ satisfies the following property:
\[
\rr_i(\tilde{\mu}) = \int_{\Gamma} \bzeta_i(\gamma) \tilde{\mu}(d\gamma).
\]
\end{proposition}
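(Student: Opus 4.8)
The plan is to combine the ergodic-average representation of $\rr_i(\gamma)$ furnished by Proposition~\ref{prop_HS}(ii) with Birkhoff's pointwise ergodic theorem applied to the continuous observable $\bzeta_i$ and the measure-preserving system $(\Gamma,\Theta,\tilde{\mu})$. First I would note that $\bzeta_i\colon\Gamma\to\R$ is continuous on the compact set $\Gamma$, hence bounded and $\tilde{\mu}$-integrable, and that $\Theta$ preserves $\tilde{\mu}$. Birkhoff's theorem then provides a $\Theta$-invariant, $\tilde{\mu}$-integrable function $\bar{\bzeta}_i$ with
\[
\lim_{t\to\infty}\frac1t\sum_{s=0}^{t-1}\bzeta_i(\Theta^s(\gamma))=\bar{\bzeta}_i(\gamma)
\]
for $\tilde{\mu}$-almost every $\gamma\in\Gamma$, and with $\int_\Gamma\bar{\bzeta}_i\,d\tilde{\mu}=\int_\Gamma\bzeta_i\,d\tilde{\mu}$.

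Next, on the full-$\tilde{\mu}$-measure set where this Cesàro limit exists, the $\limsup$ appearing in Proposition~\ref{prop_HS}(ii) coincides with the limit, so $\rr_i(\gamma)=\bar{\bzeta}_i(\gamma)$ for $\tilde{\mu}$-almost every $\gamma$. In particular $\gamma\mapsto\rr_i(\gamma)$ is $\tilde{\mu}$-integrable (being $\tilde{\mu}$-a.e.\ bounded in absolute value by $\|\bzeta_i\|_\infty$), and therefore
\[
\rr_i(\tilde{\mu})=\int_\Gamma\rr_i(\gamma)\,\tilde{\mu}(d\gamma)=\int_\Gamma\bar{\bzeta}_i(\gamma)\,\tilde{\mu}(d\gamma)=\int_\Gamma\bzeta_i(\gamma)\,\tilde{\mu}(d\gamma),
\]
which is exactly the asserted identity.

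The only delicate point is the replacement of the $\limsup$ by a genuine limit, and this is legitimate precisely because Birkhoff's theorem yields almost-everywhere convergence of the averages $\frac1t\sum_{s=0}^{t-1}\bzeta_i(\Theta^s(\gamma))$, not merely control of their $\limsup$; note that no ergodicity of $\tilde{\mu}$ is required. Consequently there is no substantial obstacle here: the real work was done in Proposition~\ref{prop_HS}(ii), and the present statement follows by invoking the pointwise ergodic theorem and integrating.
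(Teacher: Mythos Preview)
Your argument is correct and is precisely the approach the paper takes: the paper's proof is a single sentence invoking Proposition~\ref{prop_HS}(ii) together with Birkhoff's ergodic theorem applied to $\Theta$ and the continuous observable $\bzeta_i$, and you have simply spelled out the details of that invocation.
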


\begin{proof}
This result is a direct consequence of property (ii) of Proposition \ref{prop_HS} and the Birkhoff's Ergodic Theorem applied to the continuous maps $\Theta$ and $\bzeta_i$.
\end{proof}

\begin{proposition}\label{cor_lambdanegative}
For all $(\bbx,c) \in \Gamma_+ \backslash \Gamma_0$, and every $i\in \{1,\dots,m\}$,
\[
\rr_i(\gamma) \le 0.
\]
\end{proposition}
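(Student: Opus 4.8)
The plan is to exploit the dissipativity assumption \textbf{H3'} together with property (i) of Proposition~\ref{prop_HS} to show that along a bona fide total trajectory staying in $N_1(S_A)$ the matrix products $A_i^t(\gamma)$ cannot grow exponentially. Fix $(\bbx,c)\in\Gamma_+\setminus\Gamma_0$ and write $\bbx=\{x_t\}_{t\in\Z}$. Since $(\bbx,c)\notin\Gamma_0$, we have $x_0\in V_c\setminus S_0$, so $x_0$ has a strictly positive $i$-block, i.e. $\|x_0^i\|>0$; in fact, because $A_i(x)$ is primitive (\textbf{H2}), every forward iterate $x_t$ also has $\|x_t^i\|>0$ for $t\ge 0$. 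The semiconjugacy~\eqref{Eq_conj} and~\eqref{eq:cocycle} give $x_t^i = x_0^i\, c_i(x_0)c_i(x_1)\cdots c_i(x_{t-1}) = x_0^i\, A_i^t(\bbx,c)$ for all $t\ge 0$.

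First I would apply property (i) of Proposition~\ref{prop_HS} with the specific nonzero nonnegative row vector $v=x_0^i$ (legitimate since $\|x_0^i\|>0$), obtaining
\[
\rr_i(\gamma)=\limsup_{t\to\infty}\frac{1}{t}\ln\|x_0^i A_i^t(\bbx,c)\| = \limsup_{t\to\infty}\frac{1}{t}\ln\|x_t^i\|.
\]
Here I am using that $\rr_i(\gamma)$ as defined on $\Gamma$ is independent of which nonzero nonnegative vector one hits the cocycle with — that is precisely the content of Proposition~\ref{prop_HS}(i). Then the key point is that $x_t$, being a $\Phi_c$-positive trajectory in $V_c\subset N_1(S_A)$, stays in the compact set $N_1(S_A)$ for all $t\ge 0$; hence $\|x_t^i\|\le \|x_t\|\le R$ for some constant $R=\sup_{y\in N_1(S_A)}\|y\|<\infty$ independent of $t$. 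Therefore $\frac{1}{t}\ln\|x_t^i\|\le\frac{1}{t}\ln R\to 0$, which yields $\rr_i(\gamma)\le 0$.

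The only subtlety — and the step I expect to require the most care — is justifying that $x_0^i$ is a legitimate test vector in Proposition~\ref{prop_HS}(i), i.e. that $x_0^i\ne 0$, and then that the identity $x_t^i = x_0^i A_i^t(\bbx,c)$ genuinely holds for the \emph{extended} dynamics. The first follows from $(\bbx,c)\in\Gamma_+\setminus\Gamma_0 = \pi_0^{-1}\big((V_c\setminus S_0)\times\{c\}\big)$ together with the product structure of the extinction set (an element of $S_0$ has some vanishing species block, so an element outside $S_0$ has all blocks nonzero — in particular the $i$-block). The second is exactly~\eqref{eq:cocycle} combined with the componentwise form~\eqref{Model_Componentwise} of the dynamics. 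For $t=0$ the claim $\rr_i(\gamma)\le 0$ is trivial, and for $t\ge 1$ the estimate above closes the argument. I would remark in passing that this is the discrete-time, extended-dynamics analogue of the elementary observation that a bounded population cannot have positive long-term per-capita growth rate, and that the same bound shows $\rr_i(\tilde\mu)\le 0$ for every $\Theta$-invariant measure $\tilde\mu$ supported on $\Gamma_+\setminus\Gamma_0$ by integrating, via Proposition~\ref{prop_intlambda}.
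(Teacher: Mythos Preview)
Your proof is correct and follows essentially the same approach as the paper: both pick the test vector $v=x_0^i$ (nonzero since $x_0\notin S_0$), use the identity $x_0^i A_i^t(\bbx,c)=(\Phi_c^t(x_0))^i$ from \eqref{eq:cocycle}, bound $\|x_t^i\|$ uniformly by compactness of $V_c\subset N_1(S_A)$, and conclude via Proposition~\ref{prop_HS}(i). The only cosmetic difference is that the paper phrases the boundedness step through \textbf{H3'} and a time $T$ after which $\Phi_c^t(x)\in V_c$, whereas you note directly that $x_0\in V_c$ and $V_c$ is forward invariant; these are equivalent here.
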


\begin{proof}
Fix $i \in \{1,\dots,m\}$, and $(\bbx,c) \in \Gamma_+\backslash \Gamma_0$ with $x :=p \circ\pi_0(\bbx,c)$. In particular, $(\bbx,c)\in \Gamma_+^c\backslash \Gamma_0^c$ and  $x^i\in \R^{n_i}_+$ and $x^i \ne 0$. We have
\begin{eqnarray*}
x^iA^t_i(\bbx,c) &=& x^ic^t_i(x)\\
&=& (\Phi_c^t(x))^i,
\end{eqnarray*}
where the first equality is a consequence of (\ref{eq:cocycle}), and the second one follows from the definition of the map $\Phi_c$. Assumption \textbf{H3'} implies that there exists $T>0$ such that $\Phi^t_c(x)$ belongs to the compact set $V_c$ for all $t\ge T$, which implies that there exists $R>0$ such that $\Vert x^iA^t_i(\bbx,c) \Vert \le R$ for all $t\ge T$. Assertion (i) of Proposition \ref{prop_HS} applied to $v= x^i$ concludes the proof.
\end{proof}

\subsection{Properties of the empirical occupation measures}

Now we give some properties of the invasion rate with respect to a $\Theta$-invariant probability measure.

\begin{proposition}\label{prop_lambdanul}
For all $\tilde{\mu} \in \Inv(\Theta)(\Gamma_+)$ supported by $\Gamma_+ \backslash \Gamma_0$, $\rr_i(\tilde{\mu})=0$ for all $i \in \{1, \dots,m\}$.
\end{proposition}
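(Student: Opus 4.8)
The statement asserts that any $\Theta$-invariant measure $\tilde\mu$ carried by $\Gamma_+\setminus\Gamma_0$ has $\rr_i(\tilde\mu)=0$ for every species $i$. The plan is to combine the upper bound already available from Proposition~\ref{cor_lambdanegative} with a complementary lower bound obtained from the fact that populations present cannot decay to zero in the mean along an invariant measure. First I would reduce to the ergodic case: by the ergodic decomposition theorem, $\tilde\mu=\int \tilde\nu\, d\rho(\tilde\nu)$ with each ergodic $\tilde\nu$ again supported in $\Gamma_+\setminus\Gamma_0$ (this set is $\Theta$-invariant, being $\pi_0^{-1}$ of the complement of $\bS_0$ intersected with the total-trajectory space), and since $\rr_i(\tilde\mu)=\int \rr_i(\tilde\nu)\,d\rho$ by Proposition~\ref{prop_intlambda} applied to the bounded continuous integrand $\bzeta_i$, it suffices to prove $\rr_i(\tilde\nu)=0$ for ergodic $\tilde\nu$.

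So fix an ergodic $\tilde\nu$ with support in $\Gamma_+^c\setminus\Gamma_0^c$ for the relevant $c\in C$. Proposition~\ref{cor_lambdanegative} gives $\rr_i(\tilde\nu)\le 0$. For the reverse inequality, I would use Proposition~\ref{prop_HS}(i): for $\tilde\nu$-a.e.\ $\gamma=(\bbx,c)$,
\[
\rr_i(\gamma)=\limsup_{t\to\infty}\frac1t\ln\Vert x_0^i A_i^t(\bbx,c)\Vert=\limsup_{t\to\infty}\frac1t\ln\Vert (\Phi_c^t(x_0))^i\Vert,
\]
using $x_0^i\ne 0$ (since $\gamma\notin\Gamma_0^c$) and equality~\eqref{eq:cocycle}. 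Now $\ln\Vert(\Phi_c^t(x_0))^i\Vert$ is a telescoping sum: writing $g_i(\gamma):=\ln\Vert(\Phi_c(x_0))^i\Vert-\ln\Vert x_0^i\Vert=\ln\frac{\Vert x_0^i A_i(\gamma)\Vert}{\Vert x_0^i\Vert}$, which is continuous and bounded on the compact set $\Gamma_+^c\setminus\Gamma_0^c$ (the denominator is bounded below because $\pi_0(\mathrm{supp}\,\tilde\nu)$ is a compact subset of $\R^n_+$ avoiding $\bS_0$, and the numerator is bounded above by $\textbf{H3'}$/compactness), we get $\ln\Vert(\Phi_c^t(x_0))^i\Vert-\ln\Vert x_0^i\Vert=\sum_{s=0}^{t-1}g_i(\Theta^s\gamma)$. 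By Birkhoff's ergodic theorem the Cesàro averages converge $\tilde\nu$-a.e.\ to $\int g_i\,d\tilde\nu$, hence the $\limsup$ above equals $\int g_i\,d\tilde\nu$ for a.e.\ $\gamma$, so $\rr_i(\tilde\nu)=\int g_i\,d\tilde\nu$. But $\int g_i\,d\tilde\nu=0$ by $\Theta$-invariance, since $g_i=(\ln\Vert(\cdot)^i\Vert)\circ(p\circ\pi_0\circ\Theta)-(\ln\Vert(\cdot)^i\Vert)\circ(p\circ\pi_0)$ is a coboundary for the bounded measurable function $\gamma\mapsto\ln\Vert(p\circ\pi_0(\gamma))^i\Vert$, and the integral of a coboundary of an integrable function against an invariant measure vanishes. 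Therefore $\rr_i(\tilde\nu)=0$.

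The one point that needs care — and which I expect to be the main technical obstacle — is the integrability/boundedness of the cocycle increment $g_i$ on the support of $\tilde\nu$, i.e.\ that $\ln\Vert x_0^i\Vert$ is genuinely bounded (both above and below) $\tilde\nu$-a.e. Boundedness above is immediate from $\textbf{H3'}$ and the definition of $\Gamma_+^c$ (trajectories lie in $V_c\subset N_1(S_A)$). Boundedness below is where one uses that $\tilde\nu$ is supported in $\Gamma_+^c\setminus\Gamma_0^c$: its projection $\pi_0(\mathrm{supp}\,\tilde\nu)$ is a compact set disjoint from $\bS_0\times\{c\}$, so $\Vert x_0^i\Vert$ is bounded away from $0$ on it; since $\mathrm{supp}\,\tilde\nu$ is closed and $\Theta$-invariant and contained in $\Gamma_+\setminus\Gamma_0$, this is consistent. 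Once this boundedness is in hand, $g_i\in L^1(\tilde\nu)$, Birkhoff applies, the coboundary argument gives $\int g_i\,d\tilde\nu=0$, and combined with Proposition~\ref{cor_lambdanegative} we conclude $\rr_i(\tilde\nu)=0$ and hence $\rr_i(\tilde\mu)=0$. (Alternatively one can invoke Proposition~\ref{prop:equality-lyap}(ii) to transfer everything to $\Phi_c$ on $V_c$ and run the same telescoping argument there; the two routes are equivalent.)
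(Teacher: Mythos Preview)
Your approach is genuinely different from the paper's. The paper does not use a coboundary argument; instead it applies the Poincar\'e recurrence theorem directly: since $\tilde\mu(\Gamma^{i,\eta})>0$ for all small $\eta>0$, recurrence gives that for $\tilde\mu$-a.e.\ $\gamma\in\Gamma^{i,\eta}$ one has $\Theta^t(\gamma)\in\Gamma^{i,\eta}$ infinitely often, hence $\|x_0^iA_i^t(\gamma)\|=\|x_t^i\|>\eta$ infinitely often, and Proposition~\ref{prop_HS}(i) with $v=x_0^i$ yields $\rr_i(\gamma)\ge0$. Combined with Proposition~\ref{cor_lambdanegative} this gives $\rr_i(\gamma)=0$ $\tilde\mu$-a.e., and integrating gives $\rr_i(\tilde\mu)=0$. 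No ergodic decomposition and no integrability check is needed.

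Your coboundary route has a real gap precisely where you anticipate it. The hypothesis ``supported by $\Gamma_+\setminus\Gamma_0$'' should be read as $\tilde\mu(\Gamma_0)=0$; this is all the paper's proof uses, and it is what is available in the application inside the proof of Theorem~\ref{thm:main}, where the measures come from $\omega$-limit sets of interior points that may touch $S_0$. From $\tilde\nu(\Gamma_0)=0$ you \emph{cannot} conclude that the closed set $\mathrm{supp}\,\tilde\nu$ is disjoint from $\Gamma_0$, because $\Gamma_+\setminus\Gamma_0$ is open. So $h(\gamma)=\ln\|x_0^i\|$ need not be bounded below on $\mathrm{supp}\,\tilde\nu$, and you have given no argument that $h\in L^1(\tilde\nu)$, which is exactly what the identity $\int(h\circ\Theta-h)\,d\tilde\nu=0$ requires.

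Your framework can be repaired, but the repair reintroduces the paper's idea. One can check that $g_i$ itself is bounded on $\Gamma_+$ (in the $\ell^1$ norm, $e^{g_i(\gamma)}$ is a weighted average of the row sums of $A_i(\gamma)$, which are bounded and bounded away from zero by \textbf{H1}--\textbf{H2} and compactness), so Birkhoff applies to $g_i$ and the limit $L:=\int g_i\,d\tilde\nu=\lim_t\tfrac1t h(\Theta^t\gamma)$ exists a.e. The bound $h\le C$ gives $L\le0$. For $L\ge0$, if $L<0$ then $\|x_t^i\|\to0$ a.e., contradicting that $\tilde\nu(\{\|x_0^i\|>\eta\})$ is positive and $t$-independent by invariance. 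This last step is the recurrence argument in disguise; so while the packaging differs, the substantive lower-bound step is the same as the paper's.
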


\begin{proof}
Let $\tilde{\mu}$ be such a measure. Fix $i \in \{1, \dots,m\}$, and define the set $\Gamma^{i,{\eta}}:= \{ \gamma \in \Gamma_+ : \Vert \left(p\circ \pi_0(\gamma)\right)^i\Vert> \eta\}$, where $\left(p\circ \pi_0(\gamma)\right)^i \in \R_+^{n_i}$ is the $i$th sub-vector of $p\circ \pi_0(\gamma) \in \R_+^n$. By assumption on the measure $\tilde{\mu}$, there exists a real number $\eta^{*}>0$ such that $\tilde{\mu}(\Gamma^{i,{\eta}}) >0$ for all $\eta<\eta^*$.

The Poincar\'e  recurrence theorem applies to the map $\Theta$, and implies that for each $\eta<\eta^*$,
\begin{equation}\label{eq:poincare}
\tilde{\mu}(\{\gamma \in \Gamma^{i,{\eta}} \vert \ \Theta^t(\gamma) \in  \Gamma^{i,{\eta}} \text{ infinitely often }\})=1.
\end{equation}
Recall that the conjugacy (\ref{Eq_conj}) implies that for every $\gamma =(\bbx,c) \in \Gamma_+\backslash
\Gamma_0$ with $p\circ \pi_0(\gamma)=x \in V_c\backslash S_0$, we have
\begin{eqnarray*}
p\circ \pi_0(\Theta^t(\gamma))^i&=&\Phi_c^t(x)^i\\
&=&x^iA_i^t(\gamma).
\end{eqnarray*}
Then, equality (\ref{eq:poincare}) means that for $\tilde{\mu}$-almost all $\gamma \in \Gamma^{i,{\eta}}$ with $0<\eta <\eta^*$,  $ \Vert x^iA_i^t(\gamma) \Vert > \eta$ infinitely often. Therefore, Proposition \ref{prop_HS} (i), applied to $v=x^i$, implies that $\rr_i(\gamma)\ge0$ for $\tilde{\mu}$-almost all $\gamma \in \Gamma^{i,{\eta}}$, with  $\eta<\eta^*$. Hence, by Proposition \ref{cor_lambdanegative}, $\rr_i(\gamma)=0$ for $\tilde{\mu}$-almost all $\gamma \in  \bigcup_{n \ge \frac{1}{\eta^*}} \Gamma^{i,{1/n}} = \Gamma_+ \backslash \Gamma_0$, which completes the proof.
\end{proof}

\subsection{Proof of Proposition \ref{proposition-convergence-empiricalmeasures}}

\begin{lemma}\label{lemma:convergence-empiricalmeasures}
 For all $\gamma=(\bbx,c) \in\Gamma_+\backslash \Gamma_0$, every weak$^*$ limit point $\tilde{\mu}$ of the family of probability measures $\{\tilde{\Lambda}_t(\gamma)\}_{t\in \N}$ belongs to $\Inv(\Theta)(\Gamma_+^c)$ and satisfies $\rr_i(\tilde{\mu})\le 0$ for all $i$.
\end{lemma}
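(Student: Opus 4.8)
The plan is to separate the statement into its two assertions and treat each one using material already assembled. For the membership claim I would simply invoke Lemma~\ref{prop_walters}: since $\gamma=(\bbx,c)\in\Gamma_+\setminus\Gamma_0\subset\Gamma_+^c$, that lemma states precisely that the set of weak$^*$ limit points of $\{\tilde\Lambda_t(\gamma)\}_{t\in\N}$ is a non-empty compact subset of $\Inv(\Theta)(\Gamma_+^c)$, so in particular any such $\tilde\mu$ lies in $\Inv(\Theta)(\Gamma_+^c)$. Nothing further is needed there.

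For the inequality $\rr_i(\tilde\mu)\le 0$, the idea is to transport a Birkhoff average through the weak$^*$ limit. Fix $i$ and a sequence $t_k\uparrow\infty$ along which $\tilde\Lambda_{t_k}(\gamma)\to\tilde\mu$. Since $\bzeta_i:\Gamma\to\R$ is continuous and $\Gamma$ is compact (Lemma~\ref{lemma:gammaclosed}), $\bzeta_i$ is bounded, so weak$^*$ convergence gives $\int_\Gamma\bzeta_i\,d\tilde\Lambda_{t_k}(\gamma)\to\int_\Gamma\bzeta_i\,d\tilde\mu$. The right-hand side equals $\rr_i(\tilde\mu)$ by Proposition~\ref{prop_intlambda} (applicable since $\tilde\mu$ is $\Theta$-invariant), while the left-hand side is, by the very definition of $\tilde\Lambda_{t_k}(\gamma)$, the average $\frac1{t_k}\sum_{s=0}^{t_k-1}\bzeta_i(\Theta^s(\gamma))$. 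Hence
\[
\rr_i(\tilde\mu)=\lim_{k\to\infty}\frac1{t_k}\sum_{s=0}^{t_k-1}\bzeta_i(\Theta^s(\gamma))\le\limsup_{t\to\infty}\frac1t\sum_{s=0}^{t-1}\bzeta_i(\Theta^s(\gamma))=\rr_i(\gamma),
\]
the final equality being Proposition~\ref{prop_HS}(ii). Since $\gamma\in\Gamma_+\setminus\Gamma_0$, Proposition~\ref{cor_lambdanegative} yields $\rr_i(\gamma)\le 0$, so $\rr_i(\tilde\mu)\le 0$; as $i$ is arbitrary, this finishes the argument.

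I expect the only genuine subtlety — and the reason a more direct route fails — to be the following: each empirical measure $\tilde\Lambda_t(\gamma)$ is supported on the forward-invariant set $\Gamma_+^c\setminus\Gamma_0^c$, but its weak$^*$ limit $\tilde\mu$ may place mass on the boundary $\Gamma_0^c$, where the growth rates $\rr_i$ need not be nonpositive. Thus one cannot apply Proposition~\ref{cor_lambdanegative} to $\tilde\mu$ itself. Passing instead through the continuous observable $\bzeta_i$, the identity $\rr_i(\tilde\mu)=\int\bzeta_i\,d\tilde\mu$, and the elementary fact that a subsequential limit of the averages $\frac1t\sum_{s=0}^{t-1}\bzeta_i(\Theta^s(\gamma))$ is bounded above by their $\limsup$, sidesteps the issue completely; everything else is routine bookkeeping with the semiconjugacy already in place.
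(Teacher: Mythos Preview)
Your proof is correct and follows essentially the same path as the paper's: invoke Lemma~\ref{prop_walters} for the invariance claim, then use Proposition~\ref{prop_intlambda}, continuity of $\bzeta_i$, and Proposition~\ref{prop_HS}(ii) to bound $\rr_i(\tilde\mu)$ by $\rr_i(\gamma)$, and finish with Proposition~\ref{cor_lambdanegative}. Your closing paragraph explaining why one must route through $\bzeta_i$ rather than apply Proposition~\ref{cor_lambdanegative} directly to $\tilde\mu$ is a helpful clarification that the paper leaves implicit.
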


\begin{proof}
Let $\gamma=(\bbx,c) \in \Gamma_+\backslash \Gamma_0$. By Lemma \ref{prop_walters}, let $\tilde{\mu} = \lim_{k \to \infty}\tilde{\Lambda}_{t_k}(\gamma) \in \Inv(\Theta)(\Gamma_+^c)$. Proposition \ref{prop_intlambda}, the continuity of the maps $\bzeta_i$, and property (ii) of Proposition \ref{prop_HS}, imply the following equalities for all $i$:
\begin{eqnarray*}
\rr_i(\tilde{\mu})&=&\int_{\Gamma} \bzeta_i(\eta) \tilde{\mu}(d\eta) \\
&=& \lim_{k \to \infty}\frac{1}{t_k}\sum_{s=0}^{t_k-1}\bzeta_i(\Theta^s(\gamma))\\
&\le&\rr_i(\gamma).
\end{eqnarray*}
Hence, by Proposition \ref{cor_lambdanegative},
\[
\rr_i(\tilde{\mu}) \le 0, \ \text{ for all } i.
\]
\end{proof}

Now, we prove Proposition \ref{proposition-convergence-empiricalmeasures}. Let $x \in \R^n_+\backslash S_0$. By definition of the set $V_A$, there exists a time $T\ge 0$ such that $\Phi^t_A(x) \in V_A$, for all $t\ge T$. Choose $\gamma \in \pi_0^{-1}(\Phi^T_A(x),A) \subset \Gamma_+^A \backslash \Gamma_0^A$. Since $\mu$ is a weak$^*$ limit point of the family $\{\Lambda_t(\Phi^T_A(x))\}_{t\ge 0}$ if and only if it is a weak$^*$ limit point of the family $\{\Lambda_t(x)\}_{t\ge 0}$, we do not lose generality by considering $\{\Lambda_t(\Phi^T_A(x))\}_{t\ge 0}$. Since $V_A$ is compact, the set of all weak$^*$ limit points of the family of probability measures $\{\Lambda_t(\Phi^T_A(x))\}_{t\in \N}$ is a non-empty compact subset of $\mathcal{P}(V_A)$. Let $\mu = \lim_{k\to \infty} \Lambda_{t_k}(x)$ be such a weak$^*$ limit point. By Lemma \ref{lemma:convergence-empiricalmeasures}, passing to a subsequence if necessary, let $\tilde{\mu} = \lim_{k \to \infty}\tilde{\Lambda}_{t_k}(\gamma) \in \Inv(\Theta)(\Gamma_+^A)$ such that $\rr_i(\tilde{\mu})\le 0$ for all $i$. Furthermore by Lemma \ref{lemma_mes_emp} and continuity of $p \circ \pi_0$, $(p\circ \pi_{0})^*(\tilde{\mu})=\mu$. Hence Proposition \ref{prop:equality-lyap} and Lemma \ref{lemma:inv-measure} conclude the proof of the first assertion.

\subsection{Proof of Lemma \ref{lemma-inequality}}

Let $x \in S_0$. By the same arguments as in the above paragraph, there exist $T>0$, $\gamma \in \pi_0^{-1}(\Phi^T_A(x),A) \subset \Gamma_0^A$ and $\mu = \lim_{k\to \infty} \Lambda_{t_k}(x)$. Lemma~\ref{prop_walters} and the $\Theta$-invariance of $\Gamma_0^A$ imply that, passing to a subsequence if necessary, there exists $\tilde{\mu} = \lim_{k \to \infty}\tilde{\Lambda}_{t_k}(\gamma) \in \Inv(\Theta)(\Gamma_0^A)$ such that
\begin{eqnarray*}
\rr_i(\tilde{\mu})&=&\int_{\Gamma} \bzeta_i(\eta) \tilde{\mu}(d\eta) \\
&=& \lim_{k \to \infty}\frac{1}{t_k}\sum_{s=0}^{t_k-1}\bzeta_i(\Theta^s(\gamma))\\
&\le&\rr_i(\gamma).
\end{eqnarray*}
Note that since $\Phi^T_A(x)$ is on the trajectory of $x$, $\rr_i(x)=\rr_i(\Phi^T_A(x))$. Furthermore by Lemma \ref{lemma_mes_emp} and continuity of $p \circ \pi_0$, $(p\circ \pi_{0})^*(\tilde{\mu})=\mu$. Proposition~\ref{prop:equality-lyap} concludes the proof.
\subsection{Proof of Proposition \ref{proposition:robust-persistence}}

Let $\{\mu_n\}_{n\ge 1}$ be saturated invariant measures for the $\delta(A^n)$-perturbations
\[
X_{t+1}=X_tA^n(X_t),
\]
of model (\ref{DYN1}) defined at the beginning of the section. Assumption \textbf{H3'} implies that each measure $\mu_n$ is supported by the compact set $S_{A^n} \subset N_1(S_A)$. By weak* compactness of Borel probability measures on the compact set $N_1(S_A)$, there exist weak* limit points of $\{\mu_n\}_{n\ge0}$. Let $\mu \in \mathcal{P}(N_1(S_A))$ be such a weak* limit point. To ease the reading, when $A^n$ is on subscript or superscript, we write only $n$.

Now, we show that $\mu \in \Inv(\phi_A)$. Lemma \ref{lemma:inv-measure} implies that for all $n\ge 1$ there is a measure $\tilde{\mu}_n \in \Inv(\Theta)(\Gamma_+^{n})$ such that $(p\circ \pi_0)^*(\tilde{\mu}_n)=\mu_n$. By compactness of $\Inv(\Theta)(\Gamma_+)$, let $\tilde{\mu}:=\lim_{k\to \infty}\tilde{\mu}_{n_k} \in \Inv(\Theta)(\Gamma_+)$. The continuity of $p \circ \pi_0$ implies that $(p\circ \pi_0)^*(\tilde{\mu})=\mu$. We claim that $\support(\tilde{\mu}) \subset \Gamma_+^A$. For each $k\ge 1$, define
\[
B_k :=\bigcup_{l \ge k}\Gamma_+^{n_l} \cup \Gamma_+^A.
\]
Note that $B_{k+1} \subset B_k$ for all $k\ge 1$ and $\bigcap_k B_k = \Gamma_+^A$. Moreover $B_k$ is closed for all $k\ge1$. Indeed, the same arguments as in the proof of Lemma \ref{lemma:gammaclosed} can be used. Since $\support(\tilde{\mu_{n_k}})\subset \Gamma_+^{n_k}$ for all $k\ge 1$, $\lim_{m \to \infty}\tilde{\mu}_m(B_k)=1$ for all $k\ge 1$. The Portmanteau theorem (see e.g. Theorem 2.1 in \cite{billingsley-99}) implies that for all $k \ge 1$, $\tilde{\mu}(B_k)=1$. Then $\tilde{\mu}(\Gamma_+^A) = \tilde{\mu}(\bigcap_k B_k)=\lim_{k\to \infty}\tilde{\mu}(B_k) =1$. This proves the claim. Lemma~\ref{lemma:inv-measure} and the claim imply that $\mu \in \Inv(\Phi_A)(V_A)$

Next, we show that $\rr_i(\mu)\le 0$ for all $i$. Proposition \ref{prop_intlambda} and Proposition \ref{prop:equality-lyap} imply that, for all $n\ge0$ and all $i$,
\[
\rr_i(\mu_n) = \int_{\Gamma_+} \bzeta_i(\gamma)d\tilde{\mu}_n,
\]
with the convention that $\mu_0=\mu$ and $\tilde{\mu}_0=\tilde{\mu}$. On the other hand, by assumption, $\rr_i(\mu_n)\le 0$ for all $n\ge1$ and all $i$. Then, the continuity of $\bzeta_i$ and the convergence of $(\tilde{\mu}_n)_n$ to $\tilde{\mu}$ for the weak$^*$ topology imply that $\rr_i(\mu)\le0$.
$\; \; \blacksquare$

\section{Applications}\label{section:application}

\subsection{Lotka-Volterra difference equations} Even for models without population structure (i.e. $n_i=1$ for all $i$), our results extend results of \citep{garay-hofbauer-03} from homeomorphisms to non-invertible maps. An important class of these non-invertible maps are the discrete-time Lotka-Volterra equations introduced by \citep{hofbauer-etal-87} which are of the form
\begin{equation}\label{eq:LV}
x_{t+1} = x_t \circ \exp(B x_t + c)
\end{equation}
where $B$ is a $m\times m$ matrix, $c$ is a column vector of length $m$, $\exp(\cdot)$ denotes component-wise exponentiation, and $\circ$ denotes component-wise multiplication. A key feature of these equation is a time averaging property\red{~\cite[Lemma 2.4]{hofbauer-etal-87}}. Specifically, let  $I\subset\{1,\dots,m\}$ correspond to a subset of species and $K$ be a compact invariant set contained in \[
\interior \R_{+}^I:=\{x\in \R^m_+: x_i>0\mbox{ for all }i\in I\mbox{ and }x_i=0\mbox{ for all }i\notin I\},\] then there exists \red{a sequence $t_k\uparrow\infty$ and} an equilibrium $\hat x\in \interior \R_{+}^I$ such that
\[
\lim_{\red{k}\to\infty}\frac{1}{t\red{_k}} \sum_{s=1}^{\red{t_k}} x_s = \hat x
\]
whenever $x_0\in K$. In particular, this averaging property implies if $\mu$ is an ergodic measure with $\red{\mbox{species}(\mu)}=I$ \red{and there is a unique equilibrium $\hat x$ (which is generically true) in $\interior  \R_+^I$}, then
\[
\rr_i(\mu) =\sum_{j=1}^m B_{ij} \hat x_j +c_j
\]
for all $i$. Applying Theorem~\ref{thm:robust}, we get the following result.
\begin{theoreme}\label{thm:LV} Let $B$ be an $m\times m$ matrix and $c$ be a $m\times 1$ vector such that the Lotka-Volterra difference equation \eqref{eq:LV} is dissipative. Let $p_1,\dots,p_m$ be positive reals such that
\begin{equation}\label{eq:LVC}
\sum_{i=1}^m p_i \left(\sum_{j=1}^m B_{ij}\hat x_j +c_j\right)>0
\end{equation}
for every equilibrium $\hat x\in S_0$, then the Lotka-Volterra difference equation \eqref{eq:LV} is robustly permanent.
\end{theoreme}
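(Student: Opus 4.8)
The plan is to deduce Theorem~\ref{thm:LV} from Theorem~\ref{thm:robust} applied with a single‑component Morse decomposition, exactly as in the proof of Corollary~\ref{for:2species}. First I would check that \eqref{eq:LV} is an instance of \eqref{DYN1}: taking $n_i=1$ for every $i$ and $A_i(x)=\exp\bigl((Bx)_i+c_i\bigr)$ viewed as a $1\times1$ matrix, the map $x\mapsto xA(x)$ with $A=\diag(A_1,\dots,A_m)$ is exactly \eqref{eq:LV}. Assumption \textbf{H1} is immediate from continuity of $\exp$. Assumption \textbf{H2} holds with $P_i=(1)$, since $A_i(x)$ is a strictly positive scalar for every $x\in\R^n_+$ and the $1\times1$ matrix $(1)$ is primitive. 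Assumption \textbf{H3} (equivalently \textbf{H3'}: existence of a compact global attractor $S_A$, hence of $G_A$) is precisely the assumed dissipativity.

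As in Corollary~\ref{for:2species}, I would take $\{M_1\}$ with $M_1=S_0\cap G_A$ as an admissible single‑component Morse decomposition of $S_0\cap G_A$ for $\Phi_A$ restricted to $S_0$. By the equivalence of conditions (i)--(iii) in Theorem~\ref{thm:main}, it suffices to verify condition (ii) for $M_1$ with the prescribed weights $p_1,\dots,p_m$, i.e.\ that $\sum_i p_i\,\rr_i(\mu)>0$ for every ergodic probability measure $\mu$ with $\mu(S_0\cap G_A)=1$; Theorem~\ref{thm:robust} then yields robust permanence.

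The core of the argument is the identity $\rr_i(\mu)=(B\bar x)_i+c_i$ for such a $\mu$, where $\bar x:=\int x\,\mu(dx)$, together with the fact that $\bar x$ is an equilibrium lying in $S_0$. Since $n_i=1$, $\log\|A_i(X_0)\cdots A_i(X_{t-1})\|=\sum_{s=0}^{t-1}\bigl((BX_s)_i+c_i\bigr)$, and the system is dissipative, so Birkhoff's ergodic theorem gives $\rr_i(\mu)=\int\bigl((Bx)_i+c_i\bigr)\mu(dx)=(B\bar x)_i+c_i$ for every $i$ (this is the time‑averaging property \cite[Lemma 2.4]{hofbauer-etal-87}). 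Put $I=\mathrm{species}(\mu)$. Since $\mu$ is supported in $S_0$ we have $I\subsetneq\{1,\dots,m\}$, so $\bar x_j=0$ for $j\notin I$ (and $\bar x=0$ when $I=\emptyset$, i.e.\ $\mu=\delta_0$). For $i\in I$, the ``present species grow at rate zero'' property (Propositions~\ref{prop:equality-lyap}, \ref{cor_lambdanegative}, \ref{prop_lambdanul}, applied to the restriction of \eqref{eq:LV} to the face indexed by $I$) gives $\rr_i(\mu)=0$, i.e.\ $(B\bar x)_i+c_i=0$. Hence $\bar x$ is an equilibrium of \eqref{eq:LV} with $\bar x\in S_0$, so the hypothesis \eqref{eq:LVC} applies with $\hat x=\bar x$; combined with $(B\bar x)_i+c_i=\rr_i(\mu)$ for \emph{every} $i$, this gives $\sum_i p_i\,\rr_i(\mu)=\sum_i p_i\bigl((B\bar x)_i+c_i\bigr)>0$, which is condition (ii).

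I expect the main obstacle to be the middle step: verifying that the Birkhoff average $\bar x$ is genuinely an equilibrium of \eqref{eq:LV} sitting in $S_0$, so that the quantified hypothesis \eqref{eq:LVC} can be invoked. This relies on the special structure of these Lotka--Volterra maps — either the time‑averaging lemma \cite[Lemma 2.4]{hofbauer-etal-87} directly, or the ``zero growth rate for present species'' property of Proposition~\ref{prop_lambdanul} applied on each invariant face, which for $n_i=1$ turns the statement $\rr_i(\mu)=0$ immediately into the equilibrium equation $(B\bar x)_i+c_i=0$. A small point to keep in mind is that one needs only $\bar x\in S_0$, not $\bar x\in G_A$, which is automatic, and that \eqref{eq:LVC} is quantified over every boundary equilibrium, including the origin $\bar x=0$. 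Everything else is bookkeeping already carried out in the proofs of Theorems~\ref{thm:main} and \ref{thm:robust} and Corollary~\ref{for:2species}.
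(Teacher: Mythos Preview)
Your proposal is correct and follows the same route as the paper: verify that \eqref{eq:LV} fits the framework \eqref{DYN1} with $n_i=1$, then use the Lotka--Volterra time-averaging property to identify each $\rr_i(\mu)$ with $(B\hat x)_i+c_i$ for a boundary equilibrium $\hat x$, and conclude via Theorem~\ref{thm:robust} with the trivial Morse decomposition $M_1=S_0\cap G_A$. Your identification of the equilibrium as the barycenter $\bar x=\int x\,\mu(dx)$, combined with $\rr_i(\mu)=0$ for $i\in\mathrm{species}(\mu)$, is in fact slightly cleaner than the paper's sketch, which invokes uniqueness of the equilibrium in $\interior\R_+^I$ (a generic but unassumed condition).
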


The permanence condition \eqref{eq:LVC} is the same condition described by \citet[\red{Theorem 2.5}]{hofbauer-etal-87}. However, Theorem~\ref{thm:LV} implies the stronger result that permanence persists following sufficiently small perturbations of the right hand side of \eqref{eq:LV}. For example, consider the following model of competing annual plants with a seed bank
\begin{equation}\label{eq:annual}
x_{t+1}^i = g_i x_t^i \exp\left(Y_i -\sum_{j=1}^k C_{ij} g_j x_t^j\right)+(1-g_i)s_i x_t^i
\end{equation}
where $x_t^i$ is the density of seeds for species $i$, $g_i$ is the fraction of seeds of species $i$ germinating each year, $\exp(Y_i)$ is the yield of a germinating seed of species $i$ in the absence of competition,  $s_i\in [0,1)$ is the annual seed survivorship probability of species $i$,  and $C_{ij}>0$ is the competitive effect of germinated individuals of species $j$ on germinated individuals of species $i$.  Applying Theorem~\eqref{thm:LV} yields the following corollary.

\begin{corollary}
Let $C_{ij}>0$ and $Y_i>0$  for all $i,j$. If there exist $p_i>0$ such that
\[
\sum_{i=1}^m p_i \left(Y_i - \sum_{j=1}^m C_{ij}\hat x_j \right)>0
\]
\red{for every equilibrium $\hat x\in S_0$}, then there exists $\tilde g\in (0,1)$ such that the annual plant model \eqref{eq:annual} is permanent for all $g \in (\tilde g,1)^m$.
\end{corollary}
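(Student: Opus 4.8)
The plan is to realize the annual plant model~\eqref{eq:annual} with germination vector $g$ close to $\bone=(1,\dots,1)$ as a small perturbation of its $g=\bone$ limit, which is a Lotka--Volterra difference equation, and then to invoke Theorem~\ref{thm:LV}. Setting $g_i=1$ for all $i$ removes the seed-bank term and turns~\eqref{eq:annual} into $x_{t+1}^i=x_t^i\exp\bigl(Y_i-\sum_j C_{ij}x_t^j\bigr)$, which is~\eqref{eq:LV} with $B_{ij}=-C_{ij}$ and $c_i=Y_i$. For this choice of $B$ and $c$, the per-capita growth rate of species $i$ at a boundary equilibrium $\hat x\in S_0$ of the $g=\bone$ system is $Y_i-\sum_j C_{ij}\hat x_j$, so condition~\eqref{eq:LVC} coincides with the hypothesis $\sum_i p_i\bigl(Y_i-\sum_j C_{ij}\hat x_j\bigr)>0$ over all such $\hat x$. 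Hence, once the $g=\bone$ equation is seen to be dissipative, Theorem~\ref{thm:LV} gives that it is robustly permanent, and we fix $\delta>0$ so that every $\delta$-perturbation of the $g=\bone$ equation is permanent with a common region of repulsion.

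To handle dissipativity and the perturbation bookkeeping uniformly in $g$, I would first exhibit a single compact absorbing box. Writing the $1\times1$ projection ``matrix'' of species $i$ as $A_i^g(x)=g_i\exp\bigl(Y_i-\sum_j C_{ij}g_jx^j\bigr)+(1-g_i)s_i$, one gets, for every $g\in(0,1]^m$ and every $i$, the estimate $x_{t+1}^i\le g_ix_t^i\exp(Y_i-C_{ii}g_ix_t^i)+(1-g_i)s_ix_t^i\le K_i+(1-g_i)s_ix_t^i$ with $K_i:=\sup_{u\ge0}u\,e^{Y_i-C_{ii}u}=e^{Y_i-1}/C_{ii}$. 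Since $(1-g_i)s_i\le s_i<1$ this iterates to $\limsup_t x_t^i\le K_i/(1-s_i)$, and one checks directly that $S^\ast:=\prod_i[0,K_i/(1-s_i)]$ is forward invariant and absorbing for $\Phi_{A^g}$ for every $g\in(0,1]^m$. In particular assumptions \textbf{H1}--\textbf{H3} hold for each such $g$ (\textbf{H2} because $A_i^g(x)>0$, so each $1\times1$ block is primitive), and the global attractor $S_A$ of the $g=\bone$ system is contained in $S^\ast$.

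It then remains to verify that for $g$ close enough to $\bone$ the $g$-system is a $\delta$-perturbation of the $g=\bone$ equation. Condition~(iii) is immediate: $|A_i^{\bone}(x)-A_i^g(x)|$ is jointly continuous on the compact set $\overline{N_1(S_A)}\times[0,1]^m$ and vanishes when $g=\bone$, hence is $\le\delta$ on $N_1(S_A)$ once $g$ is near $\bone$. Condition~(ii) is the delicate one: since $S_A$ is the global attractor of $\Phi_{A^\bone}$, there is a $T$ with $\Phi_{A^\bone}^T(S^\ast)\subset N_{\delta/2}(S_A)$; because $A^g\to A^\bone$ uniformly on the compact set $S^\ast$ as $g\to\bone$ and the maps $\Phi_{A^g}$ ($g$ near $\bone$) are equicontinuous on $S^\ast$ (a jointly continuous family over a compact parameter set, as in the proof of Lemma~\ref{lemma:gammaclosed}), the $T$-fold compositions $\Phi_{A^g}^T$ converge uniformly to $\Phi_{A^\bone}^T$ on $S^\ast$, so $\Phi_{A^g}^T(S^\ast)\subset N_\delta(S_A)$ for $g$ near $\bone$; and $\Phi_{A^g}^T(S^\ast)$ is compact, forward invariant and absorbing for $\Phi_{A^g}$ (using $\Phi_{A^g}(S^\ast)\subset S^\ast$), so it may serve as $S_{A^g}$. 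Therefore there is $\tilde g\in(0,1)$ such that for every $g\in(\tilde g,1)^m$ the $g$-system is a $\delta$-perturbation of the $g=\bone$ equation, hence permanent by the robust permanence established above.

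The step I expect to be the main obstacle is precisely the verification of condition~(ii) of a $\delta$-perturbation, namely trapping the absorbing set of the perturbed model inside a prescribed small neighborhood of $S_A$: the easy uniform estimate only produces the common box $S^\ast$, which is typically far larger than $N_\delta(S_A)$, so one genuinely needs the upper-semicontinuity-of-attractors argument above, combining the fact that $S_A$ attracts $S^\ast$ in finite time up to $\delta/2$ with the uniform convergence of the finite iterates $\Phi_{A^g}^T$ on the fixed compact absorbing set $S^\ast$.
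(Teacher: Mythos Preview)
Your argument is correct and follows the same route as the paper: view the $g=\bone$ limit as the Lotka--Volterra difference equation~\eqref{eq:LV} with $B_{ij}=-C_{ij}$, $c_i=Y_i$, observe that the hypothesis is exactly condition~\eqref{eq:LVC}, and then invoke the robust permanence furnished by Theorem~\ref{thm:LV} to cover all $g$ sufficiently close to $\bone$. The paper's written proof only spells out the dissipativity step (via the same one-step bound $x_{t+1}^i\le K+\text{(contraction)}\cdot x_t^i$ and comparison with a scalar linear recursion) and leaves the $\delta$-perturbation bookkeeping implicit; your additional upper-semicontinuity argument for condition~(ii)---pushing the common absorbing box $S^\ast$ forward $T$ steps and using uniform convergence of $\Phi_{A^g}^T$ on $S^\ast$---is a legitimate way to fill that gap.
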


\begin{proof} To apply Theorem~\ref{thm:LV}, we need to verify that the annual plant model \eqref{eq:annual} is dissipative. Define $\tilde c=\min_i C_{ii}$, and $\tilde y = \max_i Y_i$. As  $z\exp(\tilde y - \tilde c z) \le \exp(\tilde y-1)/\tilde c=:a$ for all $z\ge 0$,  we have
\begin{equation}\label{eq:upper}
g_i x_i \exp \left( Y_i -\sum_{j=1}^k C_{ij} g_j x_j\right) \le  g_i x_i \exp \left( Y_i - C_{ii} g_i x_i\right)\le a
 \end{equation}
 for all $x\in [0,\infty)^m$. Define $b=\min_i (1-g_i) <1$.

 Let $x_t$ be a solution to the annual plant model \eqref{eq:annual} and let $\widetilde x_t$ be a solution to the linear difference equation \[
 \widetilde x_{t+1}=a (1,1,\dots,1)^T+b\widetilde x_t
 \]
 with $\widetilde x_0 =x_0$. We claim that $\widetilde x_t^i\ge x_t^i$ for all $t\ge 0$ and $i$.  By assumption, these inequalities hold for $t=0$. Now assume that they holds for some $t\ge 0$. Then inequality \eqref{eq:upper} and our choice of $b$ implies
 \[
 x_{t+1}^i \le a+bx_t^i \le a+ b \widetilde x_t^i = \widetilde x_{t+1}^i
 \]
 Since $\lim_{t\to\infty} \widetilde x_t^i = a/(1-b)<\infty$ for all $i$, it follows that the global attractor of the annual plant model \eqref{eq:annual} lies in the cube $[0,a/(1-b)]^k$ and the system is dissipative as claimed.

\end{proof}

\subsection{Competitve metacommunities} Metacommunities are ``populations of communities'' in which interacting species live in a finite number $k$ of patches coupled by dispersal. Here we consider a Lotka-Voltera metacommunities with two competing species. Let $X^{i}_t=(X^{i1}_t,\dots,X^{ik}_t)$ denote the row vector of populations abundances in the different patches for species $i$ at time $t\in \N$. Let $B^j$ be a $2\times 2$ matrix with positive entries representing the competition coefficients in patch $j$, and $c^j$ be a positive $2\times 1$ vector representing intrinsic rates of growth in patch $j$. For each species $i$, let $D^i=(d^i_{j\ell})_{\{1\le j,\ell\le k\}}$ be a column stochastic matrix whose $j,\ell$-th entry corresponds to the fraction of individuals of species $i$ in patch $\ell$ that disperse to patch $j$. Define the fitness of the species $i$ in patch $j$ as a function $f^i_j$ of the  density defined, for each $x\in \R_+^{2k}$, by
\[
f^i_j(x)=\exp(-\sum_{h}B^j_{ih}x^{hj}+c^j_i).
\]
Under these assumptions, the metacommunity dynamics are
\begin{equation}\label{eq:meta}
X^i_{t+1} = X^i_t \underbrace{ \rm{diag}\left\{f^i_1(X_t),\dots,f^i_k(X_t)\right\}D^i}_{A_i(X_t)}
\end{equation}

While Corollary~\ref{for:2species} provides a characterization of robust permanence for the dynamics of \eqref{eq:meta}, evaluating $\rr_i(\mu)$ for ergodic measures is, in general, challenging. However, using a perturbation result and the averaging property of Lotka-Volterra difference equations, we can prove the following result.

\begin{theoreme}\label{thm:spatial}
Assume $D^i$ are primitive and sufficiently close to the identity matrix i.e. $d^i_{jj}\approx 1$ for all $i,j$. Then the model \eqref{eq:meta} is robustly permanent if
\[
\max_j c^j_1 - B^j_{12}c^j_2/B^j_{22}>0 \mbox{ and }\max_j c^j_2 - B^j_{21}c^j_1/B^j_{11}>0,
\]
\red{and not permanent if one of the inequalities is reversed.}
\end{theoreme}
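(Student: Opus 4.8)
The plan is to deduce the positive assertion from the two--species criterion of Corollary~\ref{for:2species} and the negative assertion from Proposition~\ref{prop:nec}, in both cases by comparing \eqref{eq:meta} with its decoupled $D^i=I$ limit, in which each patch $j$ carries the scalar Lotka--Volterra map $x\mapsto x\,e^{c^j_i-B^j_{ii}x}$ and the averaging property applies. First, \eqref{eq:meta} is an instance of \eqref{DYN1}: each $A_i(X)=\mathrm{diag}\{f^i_1(X),\dots,f^i_k(X)\}D^i$ is continuous (\textbf{H1}), has the fixed, primitive sign pattern of $D^i$ (\textbf{H2}), and is dissipative (\textbf{H3}) since, exactly as in the annual--plant computation, $\|X^i_{t+1}\|=\sum_j X^{ij}_t f^i_j(X_t)\le\sum_j e^{c^j_i-1}/B^j_{ii}$; moreover $X\mapsto XA(X)$ is $C^\infty$, so Corollary~\ref{for:2species} applies. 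Its first hypothesis holds for every admissible $D$: at the origin $A_i(0)=\mathrm{diag}\{e^{c^j_i}\}D^i$, and column stochasticity of $D^i$ gives $\mathbf 1\,A_i(0)\ge(\min_j e^{c^j_i})\mathbf 1$ componentwise, so $\rr_i(0)=\log\rho(A_i(0))\ge\min_j c^j_i>0$.

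For the two invasion hypotheses I would argue symmetrically in $i\in\{1,2\}$ with $\ell\ne i$. On the invariant face $\{X^\ell=0\}$ species $i$ is a single primitive structured population with $\rho(A_i(0))>1$, hence uniformly persistent, so every ergodic $\mu$ with $\mbox{species}(\mu)=\{i\}$ is carried by its interior monoculture attractor $\mathcal A_i$. The species--$i$ dynamics on $\{X^\ell=0\}$ is an $O(\|D^i-I\|)$--perturbation of the product of the $k$ scalar Ricker maps, each of which has a positive attractor $\mathcal R_j^{(i)}\subset(0,\infty)$ bounded away from $0$; by upper semicontinuity of the (interior) attractor, once $D$ is close enough to the identity $\mathcal A_i$ lies in a small neighbourhood of $\prod_j\mathcal R_j^{(i)}$ and in particular each coordinate $X^{ij}$ is bounded below on $\mathcal A_i$ by a fixed $\rho>0$. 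Along every orbit in $\mathcal A_i$ one then has $\log X^{ij}_{t+1}-\log X^{ij}_t=c^j_i-B^j_{ii}X^{ij}_t+O(\|D^i-I\|/\rho)$; integrating against $\mu$ gives $\int X^{ij}\,d\mu=\hat x^j_i+O(\|D^i-I\|)$, where $\hat x^j_i:=c^j_i/B^j_{ii}$ --- the measure--theoretic form of the averaging property \citep[Lemma~2.4]{hofbauer-etal-87}. Since $A_\ell(X)=\mathrm{diag}\{e^{c^j_\ell-B^j_{\ell i}X^{ij}}\}D^\ell$ is, on $\mathcal A_i$, an $O(\|D^\ell-I\|)$--perturbation of a positive diagonal cocycle with uniformly bounded entries, its dominant Lyapunov exponent depends continuously on the perturbation, whence
\[
\rr_\ell(\mu)=\max_j\Big(c^j_\ell-B^j_{\ell i}\int X^{ij}\,d\mu\Big)+O(\|D^\ell-I\|)=\max_j\big(c^j_\ell-B^j_{\ell i}\hat x^j_i\big)+O(\|D-I\|).
\]
Under the hypothesis $\max_j(c^j_\ell-B^j_{\ell i}\hat x^j_i)>0$ the right side is bounded below by a positive constant, uniformly over such $\mu$, once $D$ is sufficiently close to $I$; with the origin handled by the first hypothesis, Corollary~\ref{for:2species} gives robust permanence.

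For the converse, suppose, say, $\max_j(c^j_1-B^j_{12}\hat x^j_2)<0$. The same estimate applied on $\{X^1=0\}$ yields $\rr_1(\mu)=\max_j(c^j_1-B^j_{12}\hat x^j_2)+O(\|D-I\|)<0$ for every ergodic $\mu$ with $\mbox{species}(\mu)=\{2\}$ and $D$ near $I$. Fixing one such $\mu$, supported on $\mathcal A_2\subset S_0$, with $\rr_1(\mu)<0$, the Ruelle--Shub stable manifold theorem for maps --- exactly as in the proof of Proposition~\ref{prop:nec}, but now with no perturbation needed since the normal exponent is already negative --- provides points $x\notin S_0$ with $\omega_A(x)\subset\mathcal A_2\subset S_0$, so $\|X^1_t\|\to0$ and \eqref{eq:meta} is not permanent; the reversed inequality for species $2$ is symmetric.

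The step I expect to be the crux is this passage to the $D\to I$ limit, and this is where the ``perturbation result'' quoted in the statement is needed. The limiting decoupled system is genuinely degenerate: it violates \textbf{H2}, and whenever some patch is competitively won outright by one species it is \emph{not} permanent, so arbitrarily weak dispersal is precisely what rescues the locally inferior competitor. Making rigorous the two continuity inputs used above --- upper semicontinuity of the interior monoculture attractors (so the per--patch densities stay bounded below and the averaging identity carries a controlled error) and continuity of the dominant invasion exponent as the cocycle $A_\ell$ degenerates to a diagonal one --- is the main technical obstacle; the remaining ingredients (the averaging property, the two--species criterion, and the Ruelle--Shub theorem) are then applied essentially as above.
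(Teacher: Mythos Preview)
Your overall architecture --- reduce to Corollary~\ref{for:2species}, treat the origin directly, and estimate the invasion exponent $\rr_\ell(\mu)$ on each monoculture face by comparison with the decoupled limit $D=I$ --- is exactly the paper's route. The difference lies entirely in how the ``crux'' you flag is resolved, and the paper's device is both simpler and sidesteps the delicate parts of your sketch.

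\medskip

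\textbf{The Lyapunov--exponent step.} You invoke ``the dominant Lyapunov exponent depends continuously on the perturbation'' to pass from the positive cocycle $A_\ell(X)=\mathrm{diag}\{f^\ell_j(X)\}D^\ell$ to its diagonal part. Continuity of Lyapunov exponents is not a standard fact, and you do not need it: what is needed is only the \emph{lower} bound, and this is elementary. Since $d^\ell_{jj}\ge 1-\delta$ and the off--diagonal entries of $D^\ell$ are non--negative, one has the entrywise inequality $A_\ell(X)\ge(1-\delta)\,\mathrm{diag}\{f^\ell_j(X)\}$, hence
\[
A_\ell(X_0)\cdots A_\ell(X_{t-1})\ \ge\ (1-\delta)^t\,\mathrm{diag}\Bigl\{\textstyle\prod_{s<t}f^\ell_j(X_s)\Bigr\}
\]
and therefore, for every $x$,
\[
\rr_\ell(x)\ \ge\ \log(1-\delta)+\max_j\ \limsup_{t\to\infty}\frac1t\sum_{s=0}^{t-1}\log f^\ell_j(X_s).
\]
This is the first line of the chain of inequalities in the paper's Proposition~\ref{prop-approx}, and it replaces your continuity appeal entirely.

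\medskip

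\textbf{The averaging step.} Here your route and the paper's genuinely diverge. You want to integrate the per--patch balance $\log X^{ij}_{t+1}-\log X^{ij}_t=c^j_i-B^j_{ii}X^{ij}_t+O(\|D^i-I\|/\rho)$ against $\mu$ to obtain $\int X^{ij}\,d\mu=\hat x^j_i+O(\delta)$. For that you need a \emph{uniform lower bound} $X^{ij}\ge\rho>0$ on the support of $\mu$, which you try to extract from upper semicontinuity of the interior monoculture attractor as $D^i\to I$. This is the genuinely delicate point: at $D^i=I$ the global attractor on the face contains every coordinate subspace, so ordinary upper semicontinuity of global attractors tells you nothing, and the interior attractor is defined on a non--compact domain for a possibly chaotic Ricker product. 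Even if some version of this can be pushed through, note that the direction you actually need is an \emph{upper} bound on $\int X^{ij}\,d\mu$ (since $B^j_{\ell i}>0$ enters with a minus sign), and your balance identity gives that only via the two--sided error estimate, which is precisely what demands the lower bound on $X^{ij}$.

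The paper avoids all of this. After the diagonal lower bound above, it does \emph{not} average against $\mu$; instead it invokes a min--max identity due to \citet{schreiber-98b},
\[
\inf_{\mu}\int\log f^\ell_j\,d\mu\ =\ \sup_{t>0}\frac1t\,\inf_{x\in\Gamma}\sum_{s=0}^{t-1}\log f^\ell_j(X_s),
\]
which reduces the problem to a \emph{finite--time} estimate. For fixed $t$ the finite sum depends continuously on $D$, so one may compare the coupled orbit $X_s$ with the decoupled orbit $Y_s$ on $[0,T]$; for the latter the Lotka--Volterra averaging applies directly in each patch, yielding $c^j_\ell-B^j_{\ell i}\hat x^j_i$. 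No control on the coupled interior attractor is needed.

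\medskip

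\textbf{The negative direction.} Your argument for non--permanence (upper bound on $\rr_1(\mu)$ via upper semicontinuity of the top exponent, then Ruelle--Shub without perturbing) is reasonable in outline, and in fact the paper says nothing beyond citing Corollary~\ref{for:2species} here. But note that to get $\rr_1(\mu)<0$ you again need $\int X^{2j}\,d\mu\approx\hat x^j_2$, so the same averaging obstacle recurs; the upper bound on $\rr_1$ alone (from subadditivity) does not use $\hat x^j_2$ at all.
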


The condition for robust permanence correspond to each competitor being able to invade the equilibrium determined by the other competitor in some patch when the dynamics are not coupled by dispersal. This simple condition holds despite the dynamics of each competitor potentially being chaotic.

The proof of Theorem~\ref{thm:spatial} follows from Corollary~\ref{for:2species} and the following lower bound for the $\rr_i(\mu)$. To this end, define $S^i_0= \{x\in \R^{2k}_+ \ :\ \Vert x^l\Vert =0, l\neq i\}$ the set where only population $i$ is present.

\begin{proposition}\label{prop-approx}
For species $1$ and for any $\eps>0$, there exists $\delta_0>0$ such that if $D_{ii}^j \in (\delta_0,1]$ for all $i,j$, then
\[
\rr_2(\mu) \ge \max_j c^j_2 - B^j_{12}c^j_1/B^j_{11}-\eps
\]
for all invariant measures \red{$\mu$} of \eqref{eq:meta} supported by $S_0^{1}$.
\end{proposition}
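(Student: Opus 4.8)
The plan is to treat the coupled system \eqref{eq:meta} restricted to $S_0^1$ as a $\delta$-perturbation of the \emph{decoupled} system in which $D^i$ is replaced by the identity, and then use the averaging property of Lotka--Volterra difference equations together with the semicontinuity of Lyapunov exponents encoded in Proposition~\ref{proposition:robust-persistence}. When $D^1 = D^2 = I$, the dynamics on $S_0^1$ decouple across patches: in each patch $j$, species~$1$ evolves by the scalar map $x \mapsto x f^1_j(x) = x\exp(-B^j_{11}x + c^j_1)$, which is a one-dimensional Lotka--Volterra difference equation. Its global attractor (after discarding the extinction fixed point, which is not in the support of a nontrivial $\mu$) carries the averaging property \cite[Lemma 2.4]{hofbauer-etal-87}: for any ergodic measure on patch~$j$'s attractor, the time-average of $x^{1j}_t$ equals the unique positive equilibrium $\hat x^{1j} = c^j_1/B^j_{11}$. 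Hence for the decoupled system, an invariant measure $\mu_0$ supported on $S_0^1$ with species~$1$ present in patch~$j$ gives $\rr_2(\mu_0) = \int \ln\|u_2(x)A_2(x)\| \,d\mu_0$, and since $A_2(x) = \operatorname{diag}\{f^2_1(x),\dots,f^2_k(x)\}$ is itself diagonal when $D^2=I$, this reduces to $\max_j \int (-B^j_{12}x^{1j} + c^j_2)\,d\mu_0 = \max_j (c^j_2 - B^j_{12}\hat x^{1j}) = \max_j c^j_2 - B^j_{12}c^j_1/B^j_{11}$. The maximum appears because the dominant growth rate of a diagonal nonnegative matrix product is the largest of the diagonal log-growth-rates, and because primitivity of $D^2$ near identity keeps the relevant submatrix irreducible so Proposition~\ref{prop:ruelle} and Proposition~\ref{prop_HS} apply (invoking Remark~\ref{remark} if $A_2$ restricted to the patches occupied by $\mu_0$ needs to be split into irreducible blocks).

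Next I would set up the perturbation argument. Fix $\eps>0$. Suppose, for contradiction, that there is a sequence $\delta_n \downarrow 0$ and dispersal matrices $D^i_n$ with $(D^i_n)_{jj} \in (1-\delta_n,1]$, primitive, such that the corresponding systems $\Phi_{A^n}$ admit invariant measures $\mu_n$ supported on $S_0^1$ with $\rr_2(\mu_n) < \max_j c^j_2 - B^j_{12}c^j_1/B^j_{11} - \eps$. The systems $\Phi_{A^n}$ are genuine $\delta_n$-perturbations (in the sense of Section~2) of the decoupled system $\Phi_{A^0}$ with $D^i = I$; dissipativity is uniform because the same upper bound $z\exp(\tilde y - \tilde c z) \le \exp(\tilde y -1)/\tilde c$ used in the annual-plant corollary controls each component regardless of the (substochastic-free, column-stochastic) dispersal step. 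By Proposition~\ref{proposition:robust-persistence} (or directly by the weak$^*$ compactness and semicontinuity arguments in its proof, since the $\mu_n$ need not be saturated), passing to a subsequence, $\mu_n \to \mu_0$ weak$^*$ with $\mu_0$ a $\Phi_{A^0}$-invariant measure supported on $S_0^1 \cap G_{A^0}$. Continuity of the maps $\bzeta_i$ on the trajectory space $\Gamma$, together with Proposition~\ref{prop_intlambda} and Proposition~\ref{prop:equality-lyap}, gives $\rr_2(\mu_n) = \int \bzeta_2 \,d\tilde\mu_n \to \int \bzeta_2\,d\tilde\mu_0 = \rr_2(\mu_0)$. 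But by the decoupled computation of the previous paragraph, $\rr_2(\mu_0) \ge \max_j c^j_2 - B^j_{12}c^j_1/B^j_{11}$, contradicting $\rr_2(\mu_0) \le \max_j c^j_2 - B^j_{12}c^j_1/B^j_{11} - \eps$. This yields $\delta_0$ as required.

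The main obstacle I anticipate is the continuity of $\rr_2$ under the perturbation when $\mu_n$ is \emph{not} assumed saturated: Proposition~\ref{proposition:robust-persistence} is stated for saturated measures, so I would either (a) re-run its proof verbatim — the saturation hypothesis is used only at the very last line to conclude $\rr_i(\mu)\le 0$, while the weak$^*$ convergence $\tilde\mu_n \to \tilde\mu$ in $\Inv(\Theta)(\Gamma_+)$ and the identity $\rr_i(\mu_n) = \int\bzeta_i\,d\tilde\mu_n$ hold for \emph{any} invariant measures — or (b) note that any invariant $\mu_n$ on $S_0^1$ is automatically saturated in the relevant components anyway, since $\rr_1(\mu_n)=0$ by Proposition~\ref{prop_lambdanul} applied to the restricted one-species subsystem. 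A secondary subtlety is making sure the one-dimensional averaging lemma of \cite{hofbauer-etal-87} is being applied on an invariant set genuinely inside $\interior\R_+$ for the occupied patches; this is why the conclusion is a lower bound with an $\eps$, absorbing the small coupling that can leak a vanishing amount of mass between patches and slightly distort the patchwise equilibria — a compactness/uniform-continuity estimate on $N_1(S_A)$ handles this and is the only genuinely quantitative piece, but it is routine.
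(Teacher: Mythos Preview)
Your compactness-and-contradiction strategy is different from the paper's direct finite-time estimate, and the specific step on which your argument hinges has a genuine gap.

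The problem is the claimed continuity of $\bzeta_2$ on the trajectory space $\Gamma$. That continuity comes from Proposition~\ref{prop:ruelle}, whose hypothesis \textbf{A2} (equivalently \textbf{H2}) demands that the matrix map be uniformly primitive on the whole compact $\Gamma$. In your setup the closure $C$ of perturbations contains the decoupled limit $D^i=I$, for which $A_2(x)=\operatorname{diag}\{f^2_1(x),\dots,f^2_k(x)\}$ is diagonal and hence \emph{not} primitive; indeed no power of a diagonal nonnegative matrix sends $\partial\R_+^k$ into $\interior\R_+^k$. Consequently the Perron direction $u_2(\cdot)$ from Proposition~\ref{prop:ruelle} need not extend continuously across $\Gamma$, $\bzeta_2$ is not defined (or not continuous) at points over $A^0$, and the convergence $\int\bzeta_2\,d\tilde\mu_n\to\int\bzeta_2\,d\tilde\mu_0$ is unjustified. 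Neither Proposition~\ref{proposition:robust-persistence} as stated (its base model must satisfy \textbf{H2}) nor Remark~\ref{remark} (which only redefines $\rr_i$ for a \emph{fixed} reducible system, and says nothing about continuity along a primitive-to-reducible degeneration) rescues this. In general only upper semicontinuity of the top Lyapunov exponent is automatic, which is the wrong direction for your contradiction.

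The paper bypasses the degeneration entirely with a hands-on estimate. It fixes the best patch $j$, uses the elementary diagonal lower bound $\|A_2(X_0)\cdots A_2(X_{t-1})\|\ge (1-\delta)^t\prod_{s=0}^{t-1}f^2_j(X_s)$ so that $\rr_2(x)\ge\log(1-\delta)+\liminf_t\frac{1}{t}\sum_s\log f^2_j(X_s)$, and then controls the scalar Birkhoff averages uniformly in $x$ via a superadditivity lemma together with Theorem~1 of \cite{schreiber-98b}, obtaining a finite time $T$ after which the averages are $\eps/4$-close to $c^j_2-B^j_{21}c^j_1/B^j_{11}$ for every initial condition in a one-step absorbing set. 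Finite-time continuity of trajectories in the dispersal parameter then transfers this from the decoupled $Y_s$ to the coupled $X_s$. Your compactness route \emph{can} be salvaged by replacing the $\bzeta_2$ step with this same diagonal lower bound, yielding $\rr_2(\mu_n)\ge\log(1-\delta_n)+\int\log f^2_j\,d\mu_n$ and then passing to the limit in the continuous bounded integrand $\log f^2_j$; but once you write that inequality the contradiction wrapper is no longer doing real work, and you are essentially executing the paper's argument.
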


\begin{proof}
To prove Proposition~\ref{prop-approx} , we need the following well-known lemma (see, for example, \cite[p.28]{bowen-75}).
\begin{lemma}\label{lemma-seq}
If $(a_t)_{t\ge0}$ is a sequence of real numbers such that $a_{t+s}\ge a_t + a_s$ for all $t,s \in \N$, then
\[
\lim_{t\to \infty}\frac{1}{t}a_t=\inf_{t\ge 0}\frac{1}{t}a_t.
\]
\end{lemma}

Fix $\eps>0$. Since $\lim_{x \to \infty}x\exp(-B^j_{11}x+c^j_1)=0$ for all $j=1,\dots,k$, there exists $N>0$ large enough such that the set
\[
\Gamma:=\{x \in S^1_0 \ :\ x^{1j}\le N, \forall j\}
\]
is \emph{one-step absorbing} for the dynamics of \eqref{eq:meta} restricted to $S^1_0$, i.e. for all $X^1_0 \in S^1_0$, $X^1_1 \in \Gamma$.  Denote $(Y_t)_{t\ge 0}$ the solution of
\[
\begin{aligned}
Y^1_{t+1} &= Y^1_t \underbrace{ \rm{diag}\left\{f^1_1(Y_t),\dots,f^1_k(Y_t)\right\}}_{=:B^1(Y_t)}  \quad Y_0\in S_0^1\\
Y^2_{t+1} &= Y^2_t \underbrace{ \rm{diag}\left\{f^2_1(Y_t),\dots,f^2_k(Y_t)\right\}}_{=:B^2(Y_t)} \\
\end{aligned}
\]
 i.e. the metapopulation dynamics for species $1$ without dispersal. Define $j= \argmax_{1\le l \le k}\{c^l_2 - B_{21}^lc^l_1/B^l_{11} \}$. The unique positive fixed point of the map $x \mapsto x\exp(-B^j_{11}x+c^j_1)$ is $c^j_1/B^j_{11}$. Hence the averaging property of Lotka-Volterra difference equation implies that
\begin{equation}\label{eq:conv}
 \lim_{t \to \infty} \frac{1}{t}\sum_{s=0}^{t-1}\log f^2_j(Y_s) = \left\{
   \begin{array}{ll}
       c^j_2 & \mbox{if } Y^{1j}_0=0 \\
       c^j_2-B_{21}^jc^j_1/B^j_{11}  & \mbox{otherwise.}
   \end{array}
\right.
\end{equation}

On the other hand, Theorem 1 in \cite{schreiber-98b} implies that
\begin{equation}\label{eq:conv-1}
\begin{aligned}
\inf_{x\in \Gamma}\left\{\lim_{t \to \infty} \frac{1}{t}\sum_{s=0}^{t-1}\log f^2_j(X_s):X_0=x\right\} &=\inf_{\mu \in \mathcal{M}_{\text{erg}}(\Gamma)}\int \log f^2_jd\mu \\
&= \sup_{t>0}\frac{1}{t}\inf_{x\in \Gamma}\left\{\sum_{s=0}^{t-1}\log f^2_j(X_s): X_0=x\right\},
\end{aligned}
\end{equation}
where $\mathcal{M}_{\text{erg}}(\Gamma)$ is the set of ergodic measures of the metapopulation dynamics \eqref{eq:meta} supported by $\Gamma$. Hence, Lemma~\ref{lemma-seq} and equalities \eqref{eq:conv} and \eqref{eq:conv-1} imply that there exists $T'>0$ such that, for all $t>T'$,
\begin{equation}\label{eq:conv-3}
\frac{1}{t}\sum_{s=0}^{t-1}\log f_j^2(Y_s) >c^j_2 - B^j_{21}c^j_1/B^j_{11} - \frac{\eps}{4},
\end{equation}
for all $Y_0\in\Gamma$
and there exists $T''>0$ such that
\begin{equation}\label{eq:conv-4}
\liminf_{t \to \infty} \frac{1}{t}\sum_{s=0}^{t-1}\log f^2_j(X_s)> \frac{1}{t}\sum_{s=0}^{t-1}\log f_j^2(X_s)- \frac{\eps}{4},
\end{equation}
for all $t>T''$ and $X_0 \in\Gamma$,

Define $T=\max\{T',T''\}$. By continuity of the functions $f^1_l, f^2_l$ for $l=1,\dots,k$, there exists $\eta>0$ such that,  whenever $|D_{jj}^i-1| \le \eta$ for all $i,j$,
\begin{equation}
 \frac{1}{T}\sum_{s=0}^{T-1}\log f_j^2(X_s)> \frac{1}{T}\sum_{s=0}^{T-1}\log f_j^2(Y_s) - \frac{\eps}{4}.
\end{equation}
for $Y_0=X_0\in \Gamma$.

Choose $\delta< \eta$ such that $\vert \log(1-\delta) \vert <\frac{\eps}{4}$. Then
\begin{eqnarray*}
r_2(x)&=& \limsup_{t\to \infty}\frac{1}{t}\log \Vert A_2(X_0)\dots A_2(X_{t-1}) \Vert\\
&\ge& \log(1-\delta) + \limsup_{t\to \infty}\frac{1}{t}\log \Vert B_2(X_0)\dots B_2(X_{t-1}) \Vert\\
&\ge&  \limsup_{t\to \infty}\frac{1}{t}\sum_{s=0}^{t-1} \log f_l^2(X_s)-\frac{\eps}{4} \\
&\ge& \liminf_{t\to \infty}\frac{1}{t}\sum_{s=0}^{t-1} \log f_l^2(X_s)-\frac{\eps}{4} \\
&\ge&\frac{1}{T}\sum_{s=0}^{T-1} \log f_l^2(X_s) - \frac{\eps}{2}\\
&\ge&  \frac{1}{T}\sum_{s=0}^{T-1} \log f^2_l(Y_s)-\frac{3\eps}{4}\\
&\ge&  c^j_2 - B^j_{21}c^j_1/B^j_{11}-\eps.
\end{eqnarray*}
for all $X_0=Y_0\in \Gamma$.
\end{proof}
\subsection{An SIR Epidemic Model~\label{application:disease}} A fundamental model in epidemiology is the SIR \red{model where} $S$ is the density of individuals susceptible to the disease, $I$ is the density of infected individuals, and $R$ is the density of removed individuals. If individuals die at a constant rate $\red{m}$ and encounter one another at the contact rate $\beta$, then a discrete-time version of this model is
\begin{equation}\label{eq:Model1}
\begin{array}{lcl}
S_{t+1} &=& f(S_t+I_t+R_t)(S_t+I_t+R_t)+e^{-\red{m}-\beta I_t}S_t\\
I_{t+1} &=& e^{-\red{m}} S_t(1-e^{-\beta I_t})\\
R_{t+1} &=& e^{-\red{m}} (I_t+R_t)
\end{array}
\end{equation}
where  $f$  is a continuous function describing reproduction that we assume satisfies
\begin{equation}\label{eq:Cond_f}
\limsup_{y\to \infty} f(y)<1-e^{-\red{m}}
\end{equation}
which guarantees the existence of the global compact trapping region $S_A$ in assumption \textbf{H3}.

Let $N=S+I+R$ be the total population size. Then \eqref{eq:Model1} is equivalent to
\begin{equation}\label{eq:Model2}
\begin{array}{lcl}
N_{t+1} &=& f(N_t)N_t+e^{-\red{m}}N_t\\
I_{t+1} &=& e^{-\red{m}} (N_t-I_t-R_t)(1-e^{-\beta I_t})\\
R_{t+1} &=& e^{-\red{m}} (I_t+R_t)
\end{array}
\end{equation}
with the state space $\{x\in \mathbb{R}^3_+\mid\: x_1\geq x_2+x_3\}$. Define
\[
u(y)=\left\{ \begin{array}{l}
\displaystyle \frac{1-e^{-\beta y}}{y},\mbox{ if }y>0\\
\\
\beta,\mbox{ if }y=0
\end{array}
\right.
\]
Then the SIR model \eqref{eq:Model2} is of the standard form \eqref{Model_Componentwise} with $m=2$, $X^1=N$, $X^2=(I,R)$,
\begin{equation}\label{eq:Matrices}
A_1(X)=f(N)+e^{-\red{m}},
\begin{array}{lcl}
A_2(X) &=&
\left(
\begin{array}{cc}
e^{-m}(N-I-R)u(I) & e^{-\red{m}}\\
0 & e^{-\red{m}}
\end{array}
\right)
\end{array}
\end{equation}
and extinction set $S_0=\{(N,I,R)\in \mathbb{R}^3_+\mid I=R=0\}$. The dynamics on $S_0$ are given by
\begin{equation}\label{eq:DF_Dynamics}
N_{t+1}=[f(N_t)+e^{-\red{m}}]N_t.
\end{equation}
While the matrix $A_2(X)$ is not irreducible, \red{we can use remark~\ref{remark} and decomposed it} into irreducible components, $A_2^1(X)=(e^{-\red{m}}(N-I-R)u(I))$ and $A_2^2(X)=(e^{-\red{m}})$.  \red{In particular, for any invariant probability measure $\mu$, we have
\begin{eqnarray*}
\rr_1(\mu)&=&\int \log(f(N)+e^{-m}) \,\mu(dNdIdR) \\
\rr_2(\mu)&=&\max\left\{ \int \log(e^{-m}(N-I-R)u(I)) \,\mu(dNdIdR),-m\right\}\\
\end{eqnarray*}
}
\red{Let us assume that the population persists in the absence of the disease i.e. $r_1(0)=\ln(f(0)+e^{-m})>0$. Theorem~\ref{thm:robust} implies that there is a positive global attractor $M_2\subset \R_+\times \{0\}\times \{0\}$ for the SIR dynamics restricted to $\R_+\times \{0\}\times \{0\}$. It follows that $\{M_1=\{(0,0,0)\},M_2\}$} is a Morse decomposition of $S_0$. \red{As $r_1(0)>0$ by assumption, Theorem \ref{thm:robust} implies robust permanence if $\int \log((N-I-R)u(I)) \,\mu(dNdIdR)>m$ for all invariant  probability measures $\mu$ with support in $M_2$.}  As a particular case, consider $f$ to be of the form $f(x)=\frac{1}{1+cx},\mbox{ for some constant }c>0.$ $f$ satisfies \eqref{eq:Cond_f} and $1+f'(x)>0$ for all $x\geq 0$. This implies that \red{forward orbits of} \eqref{eq:DF_Dynamics} \red{always converge to the globally stable equilibrium} $\bar{x}:=(1/c)(1/\sqrt{1-e^{-m}}-1)$. Thus, in this case, $M_2=\{\bar{x}\}$, and \eqref{eq:Model2} is robustly permanent if $(e^{-m \beta}/c)(1/\sqrt{1-e^{-m}}-1)>1$.
\vskip 0.1in
\paragraph{\red{{\bfseries Acknowledgements:} SJS was supported in part by US National Science Grant DMS\#1313418.} \blue{GR was supported by a start-up grant to SJS from the College of Biological Sciences, University of California, Davis and by the ERC Advanced Grant 322989 to Hal Caswell at the University of Amsterdam.}}

\bibliographystyle{plainnat}

\bibliography{seb-2}

\end{document}